\newtheorem{thm}{Theorem}[section]
\newtheorem{lem}[thm]{Lemma}
\newtheorem{cor}[thm]{Corollary}
\newtheorem{pro}[thm]{Proposition}
\theoremstyle{definition}
\newtheorem{rmk}[thm]{Remark}
\newtheorem{defi}[thm]{Definition}
\newcommand{\nc}{\newcommand}
\newcommand{\delete}[1]{}
\newcommand{\ol}{\overline}
\nc{\mlabel}[1]{\label{#1}}  
\nc{\mcite}[1]{\cite{#1}}  
\nc{\mref}[1]{\ref{#1}}  
\nc{\meqref}[1]{\eqref{#1}}  
\nc{\mbibitem}[1]{\bibitem{#1}} 
\nc{\mlabel}[1]{\label{#1}{\hfill \hspace{1cm}{\bf{{\ }\hfill(#1)}}}}
\nc{\mcite}[1]{\cite{#1}{{\bf{{\ }(#1)}}}}  
\nc{\mref}[1]{\ref{#1}{{\bf{{\ }(#1)}}}}  
\nc{\meqref}[1]{\eqref{#1}{{\bf{{\ }(#1)}}}}  
\nc{\mbibitem}[1]{\bibitem[\bf #1]{#1}} 
\DeclareMathOperator{\im}{Im}
\newcommand {\emptycomment}[1]{}
\nc{\oprn}{\theta}
\newcommand{\emptycomment}[1]{}
\nc{\calo}{\mathcal{O}}
\nc{\oop}{$\mathcal{O}$-operator\xspace}
\nc{\oops}{$\mathcal{O}$-operators\xspace}
\nc{\mrho}{{\bm{\varrho}}}
\nc{\bfk}{\mathbf{K}}
\nc{\invlim}{\displaystyle{\lim_{\longleftarrow}}\,}
\nc{\ot}{\otimes}
\nc{\eval}[1]{\Big|_{#1}}
\newcommand{\be }{\begin{equation}}
\newcommand{\ee }{\end{equation}}
\nc{\RR}{\mathbb{R}}
\nc{\hC}{\mathcal{C}}
\newcommand{\hB}{\mathcal{B}}
\newcommand{\Ad}{\operatorname{Ad}}
\newcommand{\frkg}{\mathfrak g}
\newcommand{\frkh}{\mathfrak h}
\newcommand{\br}[1]{   [ \cdot,    \cdot  ]   }
\newcommand{\id}{\mathsf{id}}
\newcommand{\wtd}{\widetilde}
\nc{\CV}{\mathbf{C}}
\newcommand{\bhd}{\blacktriangleright}
\NewDocumentEnvironment{Thm}{O{thm} D(){} m}
  {\addtocounter{#1}{-1}%
   \expandafter\renewcommand\csname the#1\endcsname{\ref{#3}}%
   \begin{#1}[#2]}
  {\end{#1}}
\begin{document}

\title[Matched pairs of Lie algebras and Rota-Baxter Lie algebras]{Matched pairs of Lie algebras and Rota-Baxter Lie algebras}

\author{Shukun Wang}
\address{School of Mathematics and Big Data, Anhui University of Science and Technology, Huainan 232001, China; Anhui Province Engineering Laboratory for Big Data Analysis and Early Warning Technology of Coal Mine Safety, Huainan 232001, China}
\email{2024093@aust.edu.cn}


\begin{abstract}
In this paper, we investigate the relationship between matched pairs of Lie algebras and Rota-Baxter Lie algebras. First, we show that every Rota-Baxter Lie algebra $(\frkg,B)$ of weight $-1$ gives rise to a matched pair of Lie algebras $(\frkg_+,\frkg_-,\rhd,\bhd)$, and we prove that the bicrossed product of $\frkg_+$ and $\frkg_-$ decomposes as $\frkg_+\bowtie\frkg_-=\frkg_1\oplus\frkg_2$. Moreover, we establish a Rota-Baxter Lie algebra structure on $\frkg_1$ which is isomorphic to $(\frkg,B)$ as a Rota-Baxter Lie algebra, and we endow $\frkg_2$ with a Rota-Baxter Lie algebra structure that is Rota-Baxter homomorphic to the descendent Rota-Baxter Lie algebra $(\frkg_B,B)$. Then we study the connection between quadratic Rota-Baxter Lie algebras and Manin triples. Finally, we show that every Rota-Baxter group induces a matched pair of groups and investigate the internal structure of the induced matched pair of groups.

\end{abstract}


\keywords{matched pairs of Lie algebras, Rota-Baxter Lie algebras, quadratic Rota-Baxter Lie algebras, matched pairs of groups, Rota-Baxter groups. \\
\quad  2020 \emph{Mathematics Subject Classification.} 17B38, 17B62.}

\maketitle

\tableofcontents

\section{Introduction}

Rota-Baxter operators on associative algebras originated in the work of G.-C. Rota \cite{RT} and Baxter \cite{BX} in probability theory and combinatorics. They also play an important role in Connes-Kreimer’s algebraic approach to the renormalisation of quantum field theory \cite{CO}. 

Independently, Rota-Baxter operators on Lie algebras have also been investigated by mathematical physicists in connection with the operator form of the (modified) classical Yang-Baxter equation. The systematic study in this direction was initiated by Semenov-Tian-Shansky \cite{STS}. Let $\frkg$ be a Lie algebra.
A linear operator $B:\frkg\to \frkg$ is called a Rota-Baxter operator of weight $\lambda$ if it satisfies
$$[B(x),B(y)]=B([B(x),y]+[x,B(y)]+\lambda [x,y]), \ \forall x,y\in \frkg. $$ 
The pair $(\frkg,B)$ is called a Rota-Baxter Lie algebra of weight $\lambda.$
In particular, Rota-Baxter operators of weight $0$ can be regarded as the operator form of the classical Yang-Baxter equation, while those of weight $1$ are in one-to-one correspondence with solutions of the modified Yang-Baxter equation and give rise to the Infinitesimal Factorisation Theorem for Lie algebras. Moreover, the
Global Factorisation Theorem was obtained by integrating the Infinitesimal Factorisation Theorem for a Lie algebra locally, it has many applications in integrable systems; see \cite{FR,GR,GR1,S2}.

To derive the global factorisations of Semenov-Tian-Shansky on the Lie group level directly, in 2021, Rota-Baxter operators on (Lie) groups were defined in \cite{LG}. Differentiating a Rota-Baxter operator on Lie groups, one can get a Rota-Baxter operator of weight $1$ on a Lie algebra. Rota-Baxter operators on (Lie) groups have been extensively studied in recent years; see \cite{VG,FC,DA}.  

Matched pairs of Lie algebras and matched pairs of groups originated in work related to Poisson geometry and dressing transformations, particularly in the context of Drinfeld doubles and Poisson-Lie groups; see \cite{BD,D,KO,LU,MJ}. They provide algebraic frameworks for decomposing Lie bialgebras and for decomposing Poisson-Lie groups.

To understand the operator forms
of non-skew-symmetric solutions of the classical Yang-Baxter equation better, quadratic Rota-Baxter Lie algebras were introduced in \cite{Hl}. A quadratic Rota-Baxter Lie algebra of weight $\lambda$ is a triple $(\frkg,B,S)$ such that $B:\frkg\to\frkg$ is a Rota-Baxter operator of weight $\lambda$ and $(\frkg,S)$ is a quadratic Lie algebra satisfying 
 \begin{equation}\label{EQRB}
 S(B(x),y)+S(x,B(y))+\lambda S(x,y)=0, \ \forall x,y\in \frkg.
 \end{equation}
There is a one-to-one correspondence between quadratic Rota-Baxter Lie algebras of nonzero weight and factorizable Lie bialgebras.

Let $(\frkg,B)$ be a Rota-Baxter Lie algebra of weight $-1.$ Then there is a Lie algebra structure $\frkg_B$ on $\frkg$ whose Lie bracket $[\cdot,\cdot]_{B}:\frkg\times \frkg\to \frkg$ is given by 
$$[x,y]_{B}=[B(x),y]+[x,B(y)]+\lambda [x,y],\ \forall x,y\in \frkg.$$
The pair $(\frkg_B,B)$ is also a Rota-Baxter Lie algebra of weight $-1.$
The matched pairs of Lie algebras on $\frkg\oplus \frkg_B$ were investigated in \cite{Hl}. Let $\wtd{B}= \id-B$, denote $\frkg_+=\im B$ and $\frkg_-=\im\wtd{B}.$ The Infinitesimal Factorisation Theorem for Lie algebras states that there is a decomposition of $\frkg$ to $\frkg_+\oplus \frkg_-.$ However, the matched pairs of Lie algebras on $\frkg_+\oplus \frkg_-$ are still mysterious. Our first goal is to understand Semenov-Tian-Shansky’s Infinitesimal Factorisation Theorem for Lie algebras from the perspective of matched pairs of Lie algebras. We show that there is a matched pair of Lie algebras on $\frkg_+\oplus \frkg_-,$ which we call the matched pairs of Lie algebras on $(\frkg,B).$ Furthermore, we prove that this matched pair can be decomposed into the direct sum $\frkg_1\oplus \frkg_2$ of Lie algebras. We show that there exists a Rota-Baxter Lie algebra on $\frkg_1$ that is Rota-Baxter isomorphic to $(\frkg,B)$. Moreover, there is a Rota-Baxter Lie algebra $\frkg_2$ that is Rota-Baxter isomorphic to the Rota-Baxter quotient Lie algebra of $(\frkg_B,B).$

For a quadratic Rota-Baxter Lie algebra of weight $-1$  $(\frkg,B,S)$, the identity \eqref{EQRB} can be written as 
$$S(B(x),y)+S(x,\wtd{B}(y))=0,\ \forall x,y\in \frkg. $$ This means a quadratic Rota-Baxter Lie algebra of weight $-1$ gives rise to a nondegenerate symmetric bilinear form $S'$ on $\frkg_+\oplus\frkg_-.$ Based on the matched pair of Lie algebras on $(\frkg,B),$ our second goal is to construct the Manin triple on $\frkg_+\oplus \frkg_-.$ We show that there is a Manin triple structure on $\frkg_+\oplus \frkg_-,$ called the Manin triple on $(\frkg,B,S).$ Then we prove that this Manin triple can be decomposed into the direct sum of two quadratic Lie algebras. Furthermore, we study the induced Rota-Baxter structures on these quadratic Lie algebras. 

Our third goal is to interpret Semenov-Tian-Shansky’s Global Factorisation Theorem for Lie groups within the framework of matched pairs of groups. We prove that every Rota-Baxter group of weight $-1$ $(G,\hB)$  induces a matched pair of groups, which we call the matched pair of groups on $(G,\hB).$ Then we consider the relationship between the matched pair of groups on a Rota-Baxter group of weight $-1$ and the matched pair of Lie algebras on a Rota-Baxter Lie algebra of weight $-1.$ Finally, we investigate the internal structure of the matched pair of groups on $(G,\hB)$.

This paper is organised as follows. In Section \ref{2}, we study the matched pair of Lie algebras on a Rota-Baxter Lie algebra of weight $-1$. We prove that this matched pair can be decomposed into the direct sum of two Lie algebras, investigating the Rota-Baxter Lie algebra structures on these Lie algebras. In Section \ref{3}, we study the Manin triple on a quadratic Rota-Baxter Lie algebra of weight $-1$ and give a decomposition theorem on this Manin triple. In Section \ref{4}, we introduce the notion of the matched pair of groups on a Rota-Baxter group of weight $-1$ and investigate its inner structure.

 \section{Matched pairs of Lie algebras, Rota–Baxter Lie algebras and Lie algebra projections}\label{2}

This section is divided into two parts. In the first part, we recall the basic notions of matched pairs of Lie algebras and Rota-Baxter Lie algebras, and show that every Rota-Baxter Lie algebra of weight $-1$ induces a matched pair of Lie algebras. In the second part, we propose the definition of Lie algebra projections on matched pairs of Lie algebras and prove two results: (\romannumeral1) every Lie algebra projection $C$ on a matched pair of Lie algebras $(\frkg_+,\frkg_-,\rhd,\bhd)$ gives rise to a Rota-Baxter Lie algebra structure on $\ker C$; (\romannumeral2) every Rota-Baxter Lie algebra of weight 
$-1$ induces a Lie algebra projection on the corresponding matched pair. 

In this section, let $(\frkg,B)$ be a Rota-Baxter Lie algebra of weight $\lambda$($\lambda\neq 0$). Denote $\im B$ by $\frkg_+$ and $\im \wtd{B}$ by $\frkg_-.$ And denote $\ker \wtd{B}$ by $\frkh_+$ and $\ker B$ by $\frkh_-.$ It follows directly from the definition that $\frkg_+,\frkg_-$ are Lie algebras and $\frkh_+\subseteq \frkg_+,$ $\frkh_-\subseteq \frkg_-$ are ideals.

\subsection{The Mached pair of Lie algebras on a Rota-Baxter Lie algebra}
First, let us recall the definition of Rota-Baxter Lie algebras. Let $\frkg$ be a Lie algebra and $B:\frkg\to \frkg$ be an operator on $\frkg.$ Recall that $B$ is called a Rota-Baxter operator of weight $\lambda$ on $\frkg$ if it satisfies 
\begin{equation}\label{RBLRS}[B(x),B(y)]=B([B(x),y]+[x,B(y)]+\lambda [x,y]),\ \forall x,y\in \frkg.
\end{equation}
Furthermore, $(\frkg,B)$ is called a Rota-Baxter Lie algebra of weight $\lambda.$ It is well-known that the operator $\wtd{B}=-\lambda-B$ is also a Rota-Baxter operator on $\frkg.$ 

Then let us recall the definition of Rota-Baxter Lie algebra homomorphisms. Let $(\frkg,B)$ and $(\frkg', B')$ be Rota-Baxter Lie algebras of weight $\lambda$. Then a map $f:(\frkg,B)\to (\frkg', B')$ is called a Rota-Baxter Lie algebra homomorphism if $f$ is a Lie algebra homomorphism from $\frkg$ to $\frkg'$ satisfying
\begin{equation}\label{RBLH}
f\circ B=B' \circ f.
\end{equation}

Let $(\frkg,B)$ be a Rota-Baxter Lie algebra of weight $\lambda.$ Recall that the operator $[\cdot,\cdot]_{B}:\frkg\times\frkg\to \frkg$ by 
\begin{equation*}
[x,y]_{B}=[B(x),y]+[x,B(y)]+\lambda [x,y],\ \forall x,y\in \frkg.
\end{equation*}
is a Lie bracket, and the Lie algebra $(\frkg,[\cdot,\cdot]_B)$ is called the descendent Lie algebra of $(\frkg,B)$.
 
The next Lemma will be used in the proof of \ref{RLMP}.
\begin{lem}\label{RBRL}
Let $(\frkg,B)$ be a Rota-Baxter Lie algebra of weight $\lambda$ $(\lambda\neq 0)$. Then $[\cdot,\cdot]_{B}=-[\cdot,\cdot]_{\wtd{B}}.$
\end{lem}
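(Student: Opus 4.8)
The plan is to prove the identity by a direct expansion, since both brackets arise from the single construction $[x,y]_{T} = [T(x),y] + [x,T(y)] + \lambda[x,y]$ applied to $T = B$ and to $T = \wtd{B}$ respectively, and $\wtd{B}$ differs from $-B$ only by a scalar multiple of the identity. First I would unwind the definition of the right-hand side, writing $[x,y]_{\wtd{B}} = [\wtd{B}(x),y] + [x,\wtd{B}(y)] + \lambda[x,y]$ for arbitrary $x,y \in \frkg$.

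Next I would substitute $\wtd{B} = -\lambda\,\Id - B$ and expand each term using the bilinearity of $[\cdot,\cdot]$. The term $[\wtd{B}(x),y]$ becomes $-\lambda[x,y] - [B(x),y]$, the term $[x,\wtd{B}(y)]$ becomes $-\lambda[x,y] - [x,B(y)]$, and these combine with the explicit summand $\lambda[x,y]$ so that two of the three $\lambda[x,y]$ contributions cancel against the remaining one, leaving exactly $-\big([B(x),y] + [x,B(y)] + \lambda[x,y]\big) = -[x,y]_{B}$. Since $x,y \in \frkg$ are arbitrary, this yields $[\cdot,\cdot]_{\wtd{B}} = -[\cdot,\cdot]_{B}$, which is the assertion.

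I do not expect a genuine obstacle here: the computation uses only the bilinearity of the Lie bracket together with the definition $\wtd{B} = -\lambda\,\Id - B$, and in particular it does not invoke the Rota-Baxter identity \eqref{RBLRS} at all — the statement is really an algebraic identity about the descendent-bracket construction rather than something special to Rota-Baxter operators. The only points meriting care are the bookkeeping of the three $\lambda[x,y]$ terms and the tacit reading of ``$\wtd{B} = -\lambda - B$'' as ``$\wtd{B} = -\lambda\,\Id - B$''.
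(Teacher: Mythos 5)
Your proof is correct and takes essentially the same route as the paper: both are direct expansions using only bilinearity of the bracket and the relation $\wtd{B}=-\lambda\,\Id-B$, and you are right that the Rota-Baxter identity \eqref{RBLRS} plays no role in this lemma. The only cosmetic difference is that the paper first records the mixed form $[x,y]_{B}=[B(x),y]-[x,\wtd{B}(y)]$ (its identity \eqref{BTB}, which it reuses later) and then invokes the antisymmetry of $[\cdot,\cdot]_{B}$, whereas you substitute $\wtd{B}=-\lambda\,\Id-B$ into both slots at once and cancel the three $\lambda[x,y]$ contributions directly.
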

\begin{proof}
For any $x,y\in \frkg,$ we have 
\begin{equation}\label{BTB}
[x,y]_{B}=[B(x),y]+[x,B(y)]+\lambda[x,y]=[B(x),y]-[x,\wtd{B}(y)].
\end{equation}
Then by the antisymmetry of $[\cdot,\cdot]_B,$ we have 
\begin{equation*}
[x,y]_B=-[y,x]_{B}=[x,B(y)]-[\wtd{B}(x),y]=-[x,y]_{\wtd{B}}.
\end{equation*}
Hence $[\cdot,\cdot]_{B}=-[\cdot,\cdot]_{\wtd{B}}.$
\end{proof}

Let $(\frkg,B)$ be a Rota-Baxter Lie algebra of nonzero weight. As $\frkh_+\subseteq \frkg_+$ and $\frkh_-\subseteq \frkg_-$ are ideals, we have the following lemma.
\begin{lem}\label{RBSY}
Let $(\frkg, B)$ be a Rota-Baxter Lie algebra of weight $\lambda$ $(\lambda\neq 0)$. Then $\frkh_-$ is invariant under the adjoint action of $\frkg_-$, that is, 
$$[\wtd{B}(x),y]\in \frkh_-,\ \forall x\in \frkg,\ \forall y\in \frkh_-.$$ Respectively, $\frkh_+$ is invariant under the adjoint action of $\frkg_+,$ that is,  
$$[B(x),y]\in \frkh_+,\ \forall x\in \frkg,\ \forall y\in \frkh_+.$$
\end{lem}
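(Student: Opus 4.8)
The plan is to prove the first assertion by a short computation with the Rota-Baxter identity \eqref{RBLRS}, and then deduce the second one for free by the symmetry $B\leftrightarrow\wtd{B}$. Both statements are, after all, just the explicit form of the ideal inclusions $\frkh_-\subseteq\frkg_-$ and $\frkh_+\subseteq\frkg_+$ already recorded above, so the content is simply to verify that the relevant brackets land back in the prescribed kernels.

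For the first statement, fix $x\in\frkg$ and $y\in\frkh_-=\ker B$, so that $B(y)=0$; since $\frkh_-=\ker B$, it suffices to check $B([\wtd{B}(x),y])=0$. First I would eliminate $\wtd{B}$ using $\wtd{B}=-\lambda-B$, which gives $[\wtd{B}(x),y]=-\lambda[x,y]-[B(x),y]$ and hence $B([\wtd{B}(x),y])=-\lambda B([x,y])-B([B(x),y])$. Next I would feed the pair $(x,y)$ into \eqref{RBLRS}: because $B(y)=0$, the left-hand side $[B(x),B(y)]$ vanishes, so $0=B([B(x),y]+\lambda[x,y])$, that is $B([B(x),y])=-\lambda B([x,y])$. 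Substituting this into the previous expression, the two contributions cancel and $B([\wtd{B}(x),y])=0$, which is exactly $[\wtd{B}(x),y]\in\frkh_-$.

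For the second statement I would invoke symmetry. Recall that $\wtd{B}$ is again a Rota-Baxter operator of weight $\lambda$, that $\ker\wtd{B}=\frkh_+$, and that $B=-\lambda-\wtd{B}$; hence the claim $[B(x),y]\in\frkh_+$ for $y\in\frkh_+$ is precisely the first statement with the roles of $B$ and $\wtd{B}$ interchanged. Running the identical argument with $\wtd{B}$ in place of $B$ then gives $\wtd{B}([B(x),y])=0$, completing the proof. There is no real obstacle here; the only thing to watch is the bookkeeping between the two operators — in particular, remembering to rewrite $\wtd{B}(x)$ as $-\lambda x-B(x)$ before applying $B$, so that \eqref{RBLRS} can be used with one slot annihilated. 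It is also worth noting that the cancellation occurs for any $\lambda$; the hypothesis $\lambda\neq0$ is needed only elsewhere, to guarantee the inclusions $\frkh_-\subseteq\frkg_-$ and $\frkh_+\subseteq\frkg_+$ that give the statements their interpretation as invariance under the respective adjoint actions.
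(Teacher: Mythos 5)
Your proof is correct. The paper in fact offers no argument for this lemma at all: it is presented as an immediate consequence of the assertion, made at the start of Section~\ref{2}, that $\frkh_+\subseteq\frkg_+$ and $\frkh_-\subseteq\frkg_-$ are ideals, and that assertion is itself left as following ``directly from the definition.'' Your computation supplies exactly the verification the paper leaves implicit: specializing \eqref{RBLRS} to $y\in\ker B$ kills the left-hand side and the $[x,B(y)]$ term, yielding $B([B(x),y])=-\lambda B([x,y])$, which cancels against the expansion $B([\wtd{B}(x),y])=-\lambda B([x,y])-B([B(x),y])$ obtained from $\wtd{B}=-\lambda\,\id-B$. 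The symmetry argument for the second claim is also sound, since $\wtd{B}$ is again a Rota-Baxter operator of weight $\lambda$ with $\ker\wtd{B}=\frkh_+$ and $-\lambda\,\id-\wtd{B}=B$, so the first statement applied to $(\frkg,\wtd{B})$ gives $\wtd{B}([B(x),y])=0$ verbatim. Your closing remark is accurate as well: the cancellation holds for every $\lambda$, and the hypothesis $\lambda\neq 0$ is used only to get $y=\wtd{B}(-y/\lambda)$ for $y\in\ker B$ (and dually), i.e.\ the inclusions $\frkh_-\subseteq\frkg_-$ and $\frkh_+\subseteq\frkg_+$ that let one read the conclusion as invariance under the adjoint actions of $\frkg_-$ and $\frkg_+$.
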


Next, let us recall the notion of matched pairs of Lie algebras.

\begin{defi}
A matched pair of Lie algebras is a quadruple $(\frkg_+,\frkg_-,\rhd,\bhd)$, where $\frkg_+$ and $\frkg_-$ are Lie algebras, $\rhd:\frkg_+\times \frkg_-\to \frkg_-$ is a representation of $\frkg_+$ on $\frkg_-$, $\bhd:\frkg_-\times \frkg_+\to \frkg_+$ is a representation of $\frkg_-$ on $\frkg_+$ such that the following conditions hold:
\begin{subequations}
\begin{equation}\label{MP1}
x\rhd[u,v]_{\frkg_-}=[x\rhd u,v]_{\frkg_-}+[u,x\rhd v]_{\frkg_-}+(v\bhd x)\rhd u-(u\bhd x)\rhd v,
\end{equation}

\begin{equation}\label{MP2}
u\bhd [x,y]_{\frkg_+}=[u\rhd x,y]_{\frkg_+}+[x,u\rhd y]_{\frkg_+}
+ (y\rhd u)\bhd x-(x\rhd u)\bhd y, 
\end{equation}
\end{subequations}
for any $x,y \in \frkg_+$ and $u,v\in \frkg_-.$

For a matched pair of Lie algebras $(\frkg_+,\frkg_-,\rhd,\bhd)$, there is a Lie algebra structure on $\frkg_+\oplus \frkg_-$ with the Lie bracket $[\cdot,\cdot]_{\bowtie}:(\frkg_+\oplus \frkg_-)\times (\frkg_+\oplus \frkg_-)\to \frkg_+\oplus \frkg_-$ given by 
$$[(x,u),(y,v)]_{\bowtie}=([x,y]_{\frkg_+}+u\rhd y-v\rhd x,[u,v]_{\frkg_-}+x\bhd v-y\bhd u),\ \forall (x,u), (y,v)\in \frkg_+\oplus \frkg_-.$$
This Lie algebra is called the bicrossed product of Lie algebras $\frkg_+$ and $\frkg_-.$
\end{defi}

\begin{rmk}\label{MM}
Let $\frkg_+$ and $\frkg_-$ be Lie algebras. If there is a Lie bracket $[\cdot,\cdot]:\frkg_+\oplus \frkg_-\to \frkg_+\oplus \frkg_-$ where $\frkg_+\oplus \frkg_-$ denotes the direct sum of $\frkg_+$ and $\frkg_-$ as vector spaces. Define the operator $\rhd:\frkg_+\times \frkg_-\to \frkg_-$ by 
$$x\rhd u=p_- [x,u],\ \forall x\in \frkg_+, \ \forall u\in \frkg_-
$$
and the operator $\bhd:\frkg_-\times \frkg_+\to \frkg_+$ by 
$$ u\bhd x =p_+ [x,u],\ \forall x\in \frkg_+,\ \forall u\in \frkg_-,$$ where $p_+:\frkg_+\oplus \frkg_-\to \frkg_+$ and $p_-:\frkg_+\oplus \frkg_-\to \frkg_-$ are projections. Then $(\frkg_+,\frkg_-,\rhd,\bhd)$ is a matched pair of Lie algebras.

\end{rmk}

Then let us recall the definition of homomorphisms of matched pairs of Lie algebras.

\begin{defi}
Let $(\frkg_+,\frkg_-,\rhd,\bhd)$ and $(\frkg'_+,\frkg'_-,\rhd', \bhd')$ be two matched pairs of Lie algebras. A homomorphism of matched pairs of Lie algebras is a double $(f_+,f_-)$ such that:
\begin{itemize}
\item[(a)] $f_+:\frkg_+\to \frkg_+$ and $f_-:\frkg_-\to \frkg_-$ are Lie algebra homomorphisms;

\item[(b)] $f_+$ and $f_-$ satisfy
\begin{equation}\label{MPLH}
\begin{aligned}
&f_+(u\bhd x)=f_-(u)\bhd' f_+(x),\ \forall x\in \frkg_+,\ \forall u\in \frkg_-;\\
&f_-(x\rhd u)=f_+(x)\rhd' f_-(u), \forall x\in \frkg_+,\ \forall u\in \frkg_-.\\
\end{aligned}
\end{equation}
\end{itemize}
\end{defi}

Let $(\frkg, B)$ be a Rota-Baxter Lie algebra of weight $-1$. Denote $\im B$ by $\frkg_+$ and $\im\wtd{B}$ by $\frkg_-.$ It is straightforward to see that $\frkg_+$ and $\frkg_-$ are both Lie subalgebras of $\frkg.$ 
Define the map
$\rhd: \frkg_+\times\frkg_-\to \frkg_-$ by 
\begin{equation*}
B(x)\rhd\wtd{B}(y)=\wtd{B}([B(x),y]),\ \forall x,y\in \frkg.
\end{equation*}
 And define the map $\bhd:\frkg_- \times \frkg_+\to \frkg_+$ by
\begin{equation*}\wtd{B}(x)\bhd B(y)={B([\wtd{B}(x),y])},\ \forall x,y\in \frkg.\end{equation*}
By Lemma \ref{RBSY}, one can check that $\rhd$ and $\bhd$ are both well defined. It follows directly from the definition that $\rhd$ is a representation of $\frkg_+$ on $\frkg_-$ and $\bhd$ is a representation of $\frkg_-$ on $\frkg_+$.

Now, we prove that every Rota-Baxter Lie algebra of weight $-1$ gives rise to a matched pair of Lie algebras.

\begin{thm}\label{RLMP}
Let $(\frkg,B)$ be a Rota-Baxter Lie algebra of weight $\lambda$ $(\lambda\neq 0)$. With the above notations, then $(\frkg_+,\frkg_-,\rhd,\bhd)$ is a matched pair of Lie algebras.
\end{thm}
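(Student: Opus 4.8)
The plan is to verify the two compatibility conditions \eqref{MP1} and \eqref{MP2} by direct computation, after setting up two labour-saving devices. Throughout I write $[\cdot,\cdot]$ for the bracket of $\frkg$. By Lemma~\ref{RBSY} the maps $\rhd,\bhd$ are well defined, and (as already noted in the text) they are representations; so the only content is the mixed compatibility. The identities that will drive the calculation are the two Rota-Baxter product rules $[B(p),B(q)]=B([p,q]_B)$ and, using Lemma~\ref{RBRL}, $[\wtd B(p),\wtd B(q)]=-\wtd B([p,q]_B)$, together with the mixed identity \eqref{BTB}, namely $[p,q]_B=[B(p),q]-[p,\wtd B(q)]$. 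I note in passing that although $B+\wtd B=-\lambda$ gives $\frkg=\frkg_++\frkg_-$, this sum need not be direct for general $\lambda\neq 0$, so the bicrossed product lives on the abstract direct sum and Remark~\ref{MM} cannot simply be invoked; a direct verification is genuinely required.

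First I would exploit the symmetry $B\leftrightarrow\wtd B$. Since $\wtd{\wtd B}=-\lambda-\wtd B=B$, interchanging the two Rota-Baxter operators exchanges $\frkg_+=\im B$ with $\frkg_-=\im\wtd B$ and $\frkh_+=\ker\wtd B$ with $\frkh_-=\ker B$; moreover it carries the defining formula $B(x)\rhd\wtd B(y)=\wtd B([B(x),y])$ of $\rhd$ exactly to the defining formula $\wtd B(x)\bhd B(y)=B([\wtd B(x),y])$ of $\bhd$. Under this interchange, condition \eqref{MP1} (one slot in $\frkg_+$, two in $\frkg_-$) is transformed into condition \eqref{MP2} (one slot in $\frkg_-$, two in $\frkg_+$). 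Hence it suffices to establish \eqref{MP1}, and \eqref{MP2} then follows formally.

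To prove \eqref{MP1}, fix $x=B(a)\in\frkg_+$ and $u=\wtd B(c),\,v=\wtd B(d)\in\frkg_-$. Using the $\wtd B$ product rule, $[u,v]_{\frkg_-}=-\wtd B([c,d]_B)$, so the left-hand side equals $-\wtd B([x,[c,d]_B])$. On the right-hand side I would compute $x\rhd u=\wtd B([x,c])$ and $x\rhd v=\wtd B([x,d])$, and for the mixed terms $v\bhd x=B([\wtd B(d),a])$ and $u\bhd x=B([\wtd B(c),a])$; applying the $\wtd B$ product rule to the first two bracket terms and the definition of $\rhd$ to the latter two, every summand on the right is again $\wtd B$ of an element of $\frkg$. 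It then remains to show that the two arguments inside $\wtd B$ coincide modulo $\frkh_+=\ker\wtd B$. This I would do by expanding the $\bhd$-contributions $[B([\wtd B(d),a]),c]$ and $[B([\wtd B(c),a]),d]$ with the mixed identity \eqref{BTB}, and then reorganising all terms through the Jacobi identity in $\frkg$.

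The main obstacle is precisely this last reconciliation. The genuinely $B$-dependent cross terms produced by $\bhd$ do not cancel term-by-term against the Jacobi expansion of $[x,[c,d]_B]$; they cancel only after the $\wtd B$-descendent brackets have been collapsed to $B$-descendent brackets via Lemma~\ref{RBRL}, and only modulo $\ker\wtd B$. Keeping track of which identity (the Jacobi identity, \eqref{BTB}, or a product rule) is applied to which nested bracket, while using the well-definedness from Lemma~\ref{RBSY} to replace representatives by convenient ones, is where the care lies: no single step is deep, but the cancellation is delicate, and it is essential to remember that equality is only needed after applying $\wtd B$.
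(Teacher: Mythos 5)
Your proposal is correct, and for condition \eqref{MP1} it follows essentially the paper's own route: fix $x=B(a)$, $u=\wtd{B}(c)$, $v=\wtd{B}(d)$, write every term as $\wtd{B}(\cdots)$ using the product rules $[B(p),B(q)]=B([p,q]_B)$ and $[\wtd{B}(p),\wtd{B}(q)]=-\wtd{B}([p,q]_B)$ (the latter via Lemma \ref{RBRL}), expand the $\bhd$-contributions with \eqref{BTB}, and reconcile through the Jacobi identity, with well-definedness supplied by Lemma \ref{RBSY}. The cancellation you defer as ``delicate'' is exactly the chain of manipulations the paper displays in its proof of Theorem \ref{RLMP}, so nothing in your plan would fail; a complete write-up would still have to perform that computation, but there is no missing idea. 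Where you genuinely depart from (and improve on) the paper is \eqref{MP2}: the paper merely asserts ``it is similar,'' whereas you derive \eqref{MP2} formally by applying the already-proved \eqref{MP1} to the Rota-Baxter Lie algebra $(\frkg,\wtd{B})$ of the same weight. Since $\wtd{\wtd{B}}=-\lambda-\wtd{B}=B$, the swap $B\leftrightarrow\wtd{B}$ exchanges $\frkg_+$ with $\frkg_-$ and carries the defining formula $B(x)\rhd\wtd{B}(y)=\wtd{B}([B(x),y])$ of $\rhd$ precisely to that of $\bhd$, and the matched-pair axioms are symmetric under $(\frkg_+,\frkg_-,\rhd,\bhd)\mapsto(\frkg_-,\frkg_+,\bhd,\rhd)$; this duality halves the verification and makes the paper's ``similar'' step rigorous. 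Your preliminary observation is also on point and worth keeping: $B+\wtd{B}=-\lambda\,\id$ gives $\frkg=\frkg_++\frkg_-$ for $\lambda\neq 0$, but the sum need not be direct (for weight $-1$ the paper later shows $\frkg_+\cap\frkg_-=\im B\circ\wtd{B}$, which is typically nonzero), so Remark \ref{MM} cannot be invoked and the direct verification of the compatibility conditions is genuinely required.
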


\begin{proof}
For any $x,y,z\in \frkg$, we have 
\begin{equation*}
\begin{aligned}
\relax &[B(x)\rhd \wtd{B}(y),\wtd{B}(z)]+[\wtd{B}(y),B(x)\rhd \wtd{B}(z)]+(\wtd{B}(z)\bhd B(x))\rhd \wtd{B}(y)-(\wtd{B}(y)\bhd B(x))\rhd \wtd{B}(z)\\
=&[\wtd{B}([B(x),y]),\wtd{B}(z)]+[\wtd{B}(y),\wtd{B}([B(x),z])]+\wtd{B}([B([\wtd{B}(z),x]),y])-\wtd{B}([
B([\wtd{B}(y),x]),z])\\
=&-\wtd{B}([B([B(x),y]),z])+\wtd{B}([[B(x),y],\wtd{B}(z)])+\wtd{B}([\wtd{B}(y),[B(x),z]])-\wtd{B}([y,B([B(x),z])])\\&+\wtd{B}([B([\wtd{B}(z),x]),y])-\wtd{B}([B([\wtd{B}(y),x]),z])\  (\textit{by \eqref{RBLRS} and \eqref{BTB}})\\
=&-\wtd{B}([[B(x),B(y)],z])+\wtd{B}([[B(x),B(z)], y])+\wtd{B}([[B(x),y],\wtd{B}(z)])+\wtd{B}([\wtd{B}(y),[B(x),z]])\\  &(\textit{by \eqref{RBLRS}, \eqref{BTB} and Lemma \ref{RBRL}}).
\end{aligned}
\end{equation*}

Then we have 
\begin{equation*}
\begin{aligned}
\relax &[B(x)\rhd \wtd{B}(y),\wtd{B}(z)]+[\wtd{B}(y),B(x)\rhd \wtd{B}(z)]+(\wtd{B}(z)\bhd B(x))\rhd \wtd{B}(y)-(\wtd{B}(y)\bhd B(x))\rhd \wtd{B}(z)\\
=&\wtd{B}([[B(y),z],B(x)])+\wtd{B}([[z,B(x)],B(y)])+\wtd{B}([[B(x),B(z)], y])+\wtd{B}([[B(x),y],\wtd{B}(z)])\\&+\wtd{B}([\wtd{B}(y),[B(x),z]])\ (\textit{by the Jacobi identity})\\
=&-B(x)\rhd \wtd{B}([B(y),z])+\lambda\wtd{B}([[B(x),z],y]) +\wtd{B}([[B(x),B(z)], y])+\wtd{B}([[B(x),y],\wtd{B}(z)])\\
=&-B(x)\rhd \wtd{B}([B(y),z])+\wtd{B}([[\wtd{B}(z),B(x)],y])+\wtd{B}([[B(x),y],\wtd{B}(z)])
.
\end{aligned}
\end{equation*}
And then we have \begin{equation*}
\begin{aligned}
\relax &\wtd{B}([B(x)\rhd \wtd{B}(y),\wtd{B}(z)])+\wtd{B}([\wtd{B}(y),B(x)\rhd \wtd{B}(z)])+(\wtd{B}(z)\bhd x)\rhd \wtd{B}(y)-(\wtd{B}(y)\bhd x)\rhd \wtd{B}(z)\\
=&-B(x)\rhd \wtd{B}([B(y),z])+\wtd{B}([B(x),[y,\wtd{B}(z)]])\ (\textit{by the Jacobi identity})\\
=&-B(x)\rhd \wtd{B}([B(y),z])+B(x)\rhd \wtd{B}([y,\wtd{B}(z)])\\
=&B(x)\rhd [\wtd{B}(y),\wtd{B}(z)]_{\frkg_-}.
\end{aligned}
\end{equation*}
This proves that \eqref{MP1} holds, and it is similar to show that \eqref{MP2} holds. Therefore, $(\frkg_+,\frkg_-,\rhd,\bhd)$
is a matched pair of Lie algebras.
\end{proof}

The matched pair of Lie algebras $(\frkg_+,\frkg_-,\rhd,\bhd)$ given in Theorem \ref{RLMP} is called \textbf{the matched pair of Lie algebras on $(\frkg,B).$}

Let $(\frkg, B)$ be a Rota-Baxter Lie algebra of weight $\lambda$ and $(\frkg_+,\frkg_-,\rhd,\bhd)$ be the matched pair of Lie algebras given in the above proposition. Then $[\cdot ,\cdot]_{\bowtie}$ is given by 
\begin{equation}\label{ID1}
\begin{aligned}
&[\left(B(x_1),\wtd{B}(x_2)\right),\left(B(y_1),\wtd{B}(y_2)\right)]_{\bowtie}\\&=\left([B(x_1),B(y_1)]+B([\wtd{B}(x_2),y_1])-B([\wtd{B}(y_2),x_1]),[\wtd{B}(x_2),\wtd{B}(y_2)]+\wtd{B}([B(x_1),y_2])-\wtd{B}([B(y_1),x_2])\right),
\end{aligned}
\end{equation}
for any $x_1,x_2,y_1,y_2\in \frkg.$

Especially, if $x_1=x_2=x$ and $y_1=y_2=y$, then we have
\begin{equation}\label{MPR}
[\left(B(x),\wtd{B}(x)\right),\left(B(y),\wtd{B}(y)     \right)]_{\bowtie}=-\lambda\left( B([x,y]),\wtd{B}([x,y])     \right).
 \end{equation}

Next, we prove that every homomorphism between Rota-Baxter Lie algebras of weight $-1$ induces a homomorphism of matched pairs of Lie algebras.
\begin{pro}
Let $(\frkg,B)$ and $(\frkg', B')$ be Rota-Baxter Lie algebras of weight $-1$. Let $(\frkg_+,\frkg_-,\rhd,\bhd)$ and $(\frkg'_+,\frkg'_-,\rhd',\bhd')$ be the matched pairs of Lie algebras on $(\frkg,B)$ and $(\frkg', B')$ respectively. Let $f:(\frkg,B)\to (\frkg',B')$ be a Rota-Baxter Lie algebra homomorphism, $f_+:\frkg_+\to \frkg'_+$ be the restriction of $f$ to $\frkg_+$ and $f_-:\frkg_-\to \frkg'_-$ be the restriction of $f$ to $\frkg_-.$ Then $(f_+,f_-)$ is a homomorphism of matched pairs of Lie algebras from $(\frkg_+,\frkg_-,\rhd,\bhd)$ to $(\frkg'_+,\frkg'_-,\rhd',\bhd').$
\end{pro}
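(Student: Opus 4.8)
The plan is to verify, after first checking that $f$ has the correct codomains on the two pieces, the two defining conditions (a) and (b) of a homomorphism of matched pairs. Everything reduces to the single fact that a Rota-Baxter homomorphism intertwines both $B$ and $\wtd B$ with their primed counterparts and preserves brackets. First I would confirm that $f$ restricts to maps into the correct subalgebras. From $f\circ B=B'\circ f$ (which is \eqref{RBLH}) one gets, for any $B(x)\in\frkg_+=\im B$, that $f(B(x))=B'(f(x))\in\im B'=\frkg'_+$, so $f_+$ indeed takes values in $\frkg'_+$. Since the weight is $-1$ we have $\wtd B=\id-B$ and $\wtd{B'}=\id-B'$, whence $f\circ\wtd B=f-f\circ B=f-B'\circ f=\wtd{B'}\circ f$; thus $f$ also intertwines $\wtd B$ and $\wtd{B'}$, and $f_-(\frkg_-)=f(\im\wtd B)\subseteq\im\wtd{B'}=\frkg'_-$. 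This well-definedness check is the one place where the Rota-Baxter condition on $f$ enters in a non-formal way.

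Condition (a) is then immediate: $\frkg_\pm$ and $\frkg'_\pm$ are Lie subalgebras (Theorem \ref{RLMP}), and the restriction of the Lie algebra homomorphism $f$ to a subalgebra is again a Lie algebra homomorphism, so $f_+$ and $f_-$ are Lie algebra homomorphisms. It remains to establish condition (b), namely \eqref{MPLH}.

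For this I would represent a generic element of $\frkg_+$ as $B(x)$ and of $\frkg_-$ as $\wtd B(y)$ and unwind the definition \eqref{bhd} of $\rhd$ together with the analogous formula $\wtd B(x)\bhd B(y)=B([\wtd B(x),y])$ for $\bhd$. Concretely, for the $\rhd$-identity,
\begin{align*}
f_-\bigl(B(x)\rhd\wtd B(y)\bigr)
&=f\bigl(\wtd B([B(x),y])\bigr)
=\wtd{B'}\bigl([B'(f(x)),f(y)]\bigr)\\
&=B'(f(x))\rhd'\wtd{B'}(f(y))
=f_+(B(x))\rhd' f_-\bigl(\wtd B(y)\bigr),
\end{align*}
where the middle equalities use that $f$ commutes with $B$ and with $\wtd B$ and preserves the bracket, together with the definitions of $\rhd$ and $\rhd'$. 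The $\bhd$-identity is entirely parallel, starting from $\wtd B(x)\bhd B(y)=B([\wtd B(x),y])$ and applying $f$. Both identities in \eqref{MPLH} then hold, so $(f_+,f_-)$ is a homomorphism of matched pairs.

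I do not expect any genuine obstacle; the only thing to watch is the bookkeeping of tracking which of $B$ or $\wtd B$ each step intertwines, since $\rhd$ is built from $B$ acting inside a $\wtd B$ while $\bhd$ is built the opposite way. Because the defining formulas for $\rhd,\bhd$ and the brackets on $\frkg_\pm$ are assembled entirely from $B$, $\wtd B$ and the bracket of $\frkg$, all of which $f$ respects, the compatibility is essentially forced.
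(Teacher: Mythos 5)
Your proposal is correct and follows essentially the same route as the paper: both verify \eqref{MPLH} by unwinding the defining formulas $B(x)\rhd\wtd{B}(y)=\wtd{B}([B(x),y])$ and $\wtd{B}(x)\bhd B(y)=B([\wtd{B}(x),y])$ and using that $f$ intertwines $B$ (hence also $\wtd{B}=\id-B$ at weight $-1$) and preserves brackets. Your explicit check that $f_+$ and $f_-$ land in $\frkg'_+$ and $\frkg'_-$ is a small tidying-up the paper leaves implicit, not a different argument.
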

\begin{proof}
First, it is straightforward to see that $f_+$ and $f_-$ are homomorphisms of Lie algebras. Then we prove that \eqref{MPLH} holds. For any $x,y\in \frkg,$ we have 
$$
\begin{aligned}
f_+(B(x))\rhd' f_-(\wtd{B}(y))&=f(B(x))\rhd' f(\wtd{B}(y))\\&=B'(f(x))\rhd' \wtd{B}'(f(y))\\&=\wtd{B}'([B'(f(x)),f(y)])\\
&=\wtd{B}'([f(B(x)),f(y)]).\ (\textit{by \eqref{RBLH}})\\
\end{aligned}
$$
Then we have
$$
\begin{aligned}
f_+(B(x))\rhd' f_-(\wtd{B}(y))&=\wtd{B}'\circ f([(B(x)),y])\\
&=f\circ \wtd{B}([B(x),y])\\&=f_-(B(x)\rhd \wtd{B}(y)).\\
\end{aligned}
$$
It is similar to prove that $$f_-(x\rhd u)=f_+(x)\rhd' f_-(u).$$ This proves that $(f_+, f_-)$ is a homomorphism of matched pairs of Lie algebras. 
\end{proof}

Then we study matched pairs of Lie algebras on Rota-Baxter Lie algebras of weight $-1$. The following lemma will be used in the proof of Theorem \ref{FL}.

\begin{lem}\label{BBW}
Let $(\frkg,B)$ be a Rota-Baxter Lie algebra of weight $\lambda$ and $(\frkg_+,\frkg_-,\rhd,\bhd)$ be the matched pair of Lie algebras on $(\frkg,B).$ Then 
$$[(-B\circ \wtd{B}(x),\wtd{B}\circ B(x)),(-B\circ \wtd{B}(y),\wtd{B}\circ B(y))]_{\bowtie}=\lambda\left(B([\wtd{B}(x),\wtd{B}(y)]),\wtd{B}([B(x),B(y)])\right)$$
for any $x,y\in \frkg.$
\end{lem}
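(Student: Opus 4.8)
The plan is to reduce the statement to the explicit bracket formula \eqref{ID1}. The element $(-B\circ\wtd{B}(x),\wtd{B}\circ B(x))$ lies in $\frkg_+\oplus\frkg_-$, and I can write it in the form $(B(x_1),\wtd{B}(x_2))$ demanded by \eqref{ID1} by taking $x_1=-\wtd{B}(x)$ and $x_2=B(x)$; likewise $y_1=-\wtd{B}(y)$, $y_2=B(y)$ for the second slot, since $B(x_1)=-B\wtd{B}(x)$ and $\wtd{B}(x_2)=\wtd{B}B(x)$. The one structural fact I will use repeatedly is that $B$ and $\wtd{B}$ commute: since $\wtd{B}=-\lambda\,\id-B$, we have $B\wtd{B}=\wtd{B}B=-\lambda B-B^2$. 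In particular $\wtd{B}B(x)=B\wtd{B}(x)$, which is exactly what makes the cross terms cancel below.

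Substituting these representatives into \eqref{ID1} and keeping track of the minus signs, the first component becomes
$$[B\wtd{B}(x),B\wtd{B}(y)]-B([\wtd{B}B(x),\wtd{B}(y)])+B([\wtd{B}B(y),\wtd{B}(x)]).$$
I would then apply the Rota-Baxter identity \eqref{RBLRS} to the leading term $[B(\wtd{B}(x)),B(\wtd{B}(y))]$, expanding it as $B([B\wtd{B}(x),\wtd{B}(y)]+[\wtd{B}(x),B\wtd{B}(y)]+\lambda[\wtd{B}(x),\wtd{B}(y)])$. After using $\wtd{B}B=B\wtd{B}$, the term $B([B\wtd{B}(x),\wtd{B}(y)])$ cancels against $-B([\wtd{B}B(x),\wtd{B}(y)])$, while $B([\wtd{B}(x),B\wtd{B}(y)])$ cancels against $B([\wtd{B}B(y),\wtd{B}(x)])$ by antisymmetry of the bracket. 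What survives is exactly $\lambda B([\wtd{B}(x),\wtd{B}(y)])$, the first component of the claimed right-hand side.

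The second component is handled symmetrically, now applying the Rota-Baxter identity to $\wtd{B}$ rather than to $B$. After substitution it reads
$$[\wtd{B}B(x),\wtd{B}B(y)]-\wtd{B}([B\wtd{B}(x),B(y)])+\wtd{B}([B\wtd{B}(y),B(x)]),$$
and expanding $[\wtd{B}(B(x)),\wtd{B}(B(y))]$ via the weight-$\lambda$ Rota-Baxter relation for $\wtd{B}$ produces $\wtd{B}([\wtd{B}B(x),B(y)]+[B(x),\wtd{B}B(y)]+\lambda[B(x),B(y)])$. Again using $\wtd{B}B=B\wtd{B}$ together with antisymmetry, the two non-$\lambda$ pairs cancel and only $\lambda\wtd{B}([B(x),B(y)])$ remains. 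Combining the two components gives the asserted identity.

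I do not anticipate a genuine obstacle here: the computation is linear and uses only \eqref{ID1}, the Rota-Baxter relation \eqref{RBLRS} for both $B$ and $\wtd{B}$, antisymmetry, and the commutativity $B\wtd{B}=\wtd{B}B$. The only points requiring care are the sign bookkeeping coming from the minus signs in $x_1=-\wtd{B}(x)$ and in the bracket formula, and the choice to apply the Rota-Baxter identity to $B$ in the first component but to $\wtd{B}$ in the second, so that the surviving weight term lands in the correct image.
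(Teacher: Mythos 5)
Your proposal is correct and follows essentially the same route as the paper's own proof: the paper likewise expands the bracket through the definitions of $\rhd$ and $\bhd$ (i.e., formula \eqref{ID1} with the representatives $x_1=-\wtd{B}(x)$, $x_2=B(x)$), applies the weight-$\lambda$ Rota-Baxter identity \eqref{RBLRS} for $B$ in the $\frkg_+$-component and for $\wtd{B}$ in the $\frkg_-$-component, and cancels the cross terms via antisymmetry and the commutativity $B\circ\wtd{B}=\wtd{B}\circ B$. Your write-up merely makes explicit two points the paper uses silently, namely the choice of representatives and the identity $B\wtd{B}=\wtd{B}B=-\lambda B-B^2$, which is a harmless clarification rather than a different argument.
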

\begin{proof}
For any $x,y\in \frkg,$ by \eqref{RBLRS}, we have 
\begin{equation*}
\begin{aligned}
 &[(-B\circ \wtd{B}(x),\wtd{B}\circ B(x)),(-B\circ \wtd{B}(y),\wtd{B}\circ B(y))]_{\bowtie}\\
=&([B\circ\wtd{B}(x),B\circ\wtd{B}(y)]
,[\wtd{B}\circ B(y),\wtd{B}\circ B(x)])+\left((\wtd{B}\circ B(x))\bhd (B\circ \wtd{B}(y)) ,(B\circ \wtd{B}(x))\rhd (\wtd{B}\circ B(y))\right)\\&-\left(( \wtd{B}\circ B(y))\bhd (B\circ \wtd{B}(x)),(B\circ \wtd{B}(y))\rhd (\wtd{B}\circ B(x))  \right)\\
 =&\left(B([B\circ\wtd{B}(x),\wtd{B}(y)]),\wtd{B}([\wtd{B}\circ B(x),B(y)])\right)+\left(B([\wtd{B}(x),B\circ \wtd{B}(y)]),\wtd{B}([ B(x), \wtd{B}\circ B(y)])\right)\\
&+\lambda\left(B([\wtd{B}(x),\wtd{B}(y)]),\wtd{B}([B(x),B(y)])\right)-\left(B([ 
\wtd{B}\circ B(x),\wtd{B}(y)]), \wtd{B}([B\circ \wtd{B}(x),B(y)])\right ) \\
&+\left( B([\wtd{B}\circ B(y),\wtd{B}(x)]),\wtd{B}([B\circ \wtd{B}(y),B(x)])\right)\\
=&\lambda\left(B([\wtd{B}(x),\wtd{B}(y)]),\wtd
{B}([B(x),B(y)])\right).
\end{aligned}
\end{equation*}
This proves the assertion.
\end{proof}

\subsection{Rota-Baxter Lie algebras and Lie algebra projections on matched pairs of Lie algebras}

Now, let us introduce the notion of Lie algebra projections on matched pairs of Lie algebras.

\begin{defi}\label{FAC}
Let $(\frkg_+,\frkg_-,\rhd,\bhd)$ be a matched pair of Lie algebras. An operator $C:\frkg_+\bowtie \frkg
_-\to \frkg_+\bowtie \frkg_-$ is called a \textbf{Lie algebra projection on $(\frkg_+,\frkg_-,\rhd,\bhd)$} if  $C$ is an idempotent Lie algebra homomorphism.
\end{defi}
Let $(\frkg_+,\frkg_-,\rhd,\bhd)$ be a matched pair of Lie algebras and $C$ be a Lie algebra projection on $(\frkg_+,\frkg_-,\rhd,\bhd).$ In this paper, we denote $\wtd{C}=\id_{\frkg_+\bowtie\frkg_-}-C$. It follows directly from the definition that $\wtd{C}$ is a Lie algebra projection on $(\frkg_+,\frkg_-,\rhd,\bhd)$ and every Lie algebra projection on $(\frkg_+,\frkg_-,\rhd,\bhd)$ is a Rota-Baxter operator on $\frkg_+\bowtie \frkg_-$ of weight $-1$.

In the next proposition, we give an equivalent characterisation of Lie algebra projections on matched pairs of Lie algebras.

\begin{pro}
Let $(\frkg_+,\frkg_-,\rhd,\bhd)$ be a matched pair of Lie algebras. Then an operator $C:\frkg_+\bowtie \frkg_-\to \frkg_+\bowtie \frkg_-$ is a Lie algebra projection on $(\frkg_+,\frkg_-,\rhd,\bhd)$ if and only if there are Lie subalgebras $\frkg_1$ and $\frkg_2$ of $\frkg_+\bowtie \frkg_-$ such that $\frkg_+\bowtie \frkg_-=\frkg_1\oplus \frkg_2$ and $C$ is the projection from $\frkg$ to $\frkg_1.$ 
\end{pro}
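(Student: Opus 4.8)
The plan is to prove both implications by working with the Rota-Baxter characterization of a Lie algebra projection rather than with the bracket directly. By the remark following Definition \ref{FAC}, a Lie algebra projection $C$ is an idempotent operator satisfying the weight $-1$ Rota-Baxter identity $[C(a),C(b)]=C([C(a),b]+[a,C(b)]-[a,b])$ on $\frkg_+\bowtie\frkg_-$; moreover this is the right level at which to argue, since the same remark asserts that $\wtd C=\id-C$ is again a Lie algebra projection, and it is exactly the weight $-1$ Rota-Baxter property, not a bracket-homomorphism property, that $\id-C$ inherits from $C$. So throughout I would read ``Lie algebra projection'' as ``idempotent weight $-1$ Rota-Baxter operator''.

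For the forward implication, suppose $C$ is a Lie algebra projection and put $\frkg_1=\im C$ and $\frkg_2=\ker C$. Idempotency alone gives the vector-space splitting $\frkg_+\bowtie\frkg_-=\frkg_1\oplus\frkg_2$ together with the identification of $C$ as the projection onto $\frkg_1$ along $\frkg_2$. To see that $\frkg_1$ and $\frkg_2$ are subalgebras, I would feed pairs of elements of $\im C$ (resp. of $\ker C$) into the Rota-Baxter identity: since $C$ fixes $\im C$ pointwise, for $a,b\in\im C$ the identity collapses to $[a,b]=C[a,b]$, so $[a,b]\in\im C$; and since $C$ annihilates $\ker C$, for $a,b\in\ker C$ it collapses to $C[a,b]=0$, so $[a,b]\in\ker C$. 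This yields the decomposition into two subalgebras, with neither forced to be an ideal.

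For the converse, suppose $\frkg_+\bowtie\frkg_-=\frkg_1\oplus\frkg_2$ with $\frkg_1,\frkg_2$ subalgebras and $C$ the projection onto $\frkg_1$ along $\frkg_2$. Idempotency is immediate, so the work is to verify the Rota-Baxter identity. Writing $x=x_1+x_2$ and $y=y_1+y_2$ with $x_i,y_i\in\frkg_i$, one side is $[C(x),C(y)]=[x_1,y_1]$, while expanding the combination $[C(x),y]+[x,C(y)]-[x,y]$ makes the two mixed brackets $[x_1,y_2]$ and $[x_2,y_1]$ cancel, leaving $[x_1,y_1]-[x_2,y_2]$; applying $C$ and using that $[x_1,y_1]\in\frkg_1=\im C$ and $[x_2,y_2]\in\frkg_2=\ker C$ returns $[x_1,y_1]$. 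Hence $C$ satisfies the weight $-1$ Rota-Baxter identity and is a Lie algebra projection.

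The step I expect to be the crux is this cancellation in the converse. The uncontrolled mixed terms in $[\frkg_1,\frkg_2]$ would in general obstruct $C$ from respecting the bracket, and it is only because they enter the combination $[C(x),y]+[x,C(y)]-[x,y]$ with opposite signs that they drop out; this is precisely what permits the hypothesis to be that both summands are merely subalgebras, with neither required to be an ideal. Keeping the bookkeeping of the mixed brackets straight, and consistently reading $C$ through its Rota-Baxter identity rather than as a bracket homomorphism, is where the argument has to be handled carefully.
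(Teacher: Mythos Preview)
The paper states this proposition without proof, so there is nothing to compare against directly. Your argument is mathematically correct, and in fact it proves the right statement: an idempotent linear operator $C$ on $\frkg_+\bowtie\frkg_-$ is a weight~$-1$ Rota--Baxter operator if and only if $\frkg_+\bowtie\frkg_-=\frkg_1\oplus\frkg_2$ with both summands subalgebras and $C$ the projection onto $\frkg_1$ along $\frkg_2$.

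The subtlety worth flagging is the one you already identified. Under the paper's literal Definition~\ref{FAC} (``idempotent Lie algebra \emph{homomorphism}''), the ``if'' direction of the proposition is actually false: the projection onto $\frkg_1$ along $\frkg_2$ satisfies $C[x,y]=[Cx,Cy]$ only when $\frkg_2=\ker C$ is an ideal, not merely a subalgebra. For instance, in $\mathfrak{sl}_2$ with $\frkg_1=\mathrm{span}(h,e)$ and $\frkg_2=\mathrm{span}(f)$, both are subalgebras but $C[e,f]=C(h)=h\neq 0=[Ce,Cf]$. Your converse computation proves only the Rota--Baxter identity, not the homomorphism property, and this is exactly as it should be.

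Your diagnosis via the remark after Definition~\ref{FAC} is also on target: the claim that $\wtd C=\id-C$ is again a Lie algebra projection fails for idempotent homomorphisms in general (it would force $[\frkg_1,\frkg_2]=0$), but it does hold for idempotent weight~$-1$ Rota--Baxter operators. So your reading is not just a convenience but the consistent interpretation; you might state this as an explicit correction rather than a reinterpretation.
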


In the next proposition, we show that every Lie algebra projection $C$ on a matched pair of Lie algebras $(\frkg_+,\frkg_-,\rhd,\bhd)$ induces a Rota-Baxter Lie algebra on $\ker C$.

\begin{pro}\label{MR}
Let $(\frkg_+,\frkg_-,\rhd,\bhd)$ be a matched pair of Lie algebras and $C:\frkg_+\bowtie \frkg_-\to \frkg_+\bowtie \frkg_-$ be a Lie algebra projection on $(\frkg_+,\frkg_-,\rhd,\bhd)$. Let $\frkg=\ker C$. Then the linear operators $B:\frkg\to \frkg$ defined by 
\begin{equation*}
B((x,u))=\wtd{C}((x,0)),\ \forall (x,u)\in \frkg,
\end{equation*}
and the operator 
$\wtd{B}:\frkg\to \frkg$ defined by 
\begin{equation*}
\wtd{B}((x,u))=\wtd{C}((0,u)),\ \forall (x,u)\in \frkg,
\end{equation*}
are Rota-Baxter operators of weight $-1$ on $\frkg$ such that 
$B+\wtd{B}=\id_{\frkg}.$
\end{pro}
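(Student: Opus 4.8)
The plan is to regard $B$ and $\wtd B$ as the restrictions to $\frkg=\ker C$ of the composites $\wtd C\circ p_+$ and $\wtd C\circ p_-$, where $p_+,p_-\colon\frkg_+\bowtie\frkg_-\to\frkg_+\bowtie\frkg_-$ are the projections $p_+((x,u))=(x,0)$ and $p_-((x,u))=(0,u)$. First I would record the two formal facts that drive everything. Since $C$ is idempotent, $C\wtd C=\wtd C C=0$ and $\wtd C^2=\wtd C$, so $\im\wtd C=\ker C=\frkg$ and $\wtd C|_{\frkg}=\id_{\frkg}$; and since $\frkg_+,\frkg_-$ are complementary Lie subalgebras of $\frkg_+\bowtie\frkg_-$, each of $p_+,p_-$ is a projection onto a subalgebra along a complementary subalgebra and hence is itself a Rota-Baxter operator of weight $-1$ on $\frkg_+\bowtie\frkg_-$. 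The first fact gives well-definedness at once (both $B$ and $\wtd B$ take values in $\im\wtd C=\frkg$) together with the decomposition identity: for $(x,u)\in\frkg$ one has $C((x,u))=0$, whence $B((x,u))+\wtd B((x,u))=\wtd C((x,0))+\wtd C((0,u))=\wtd C((x,u))=(x,u)$, so $B+\wtd B=\id_{\frkg}$.

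Next I would prove that $B=\wtd C\,p_+|_{\frkg}$ is a Rota-Baxter operator of weight $-1$. Fix $a,b\in\frkg$; then $\wtd C a=a$, $\wtd C b=b$. Expanding the left-hand side by the homomorphism property of $\wtd C$ gives $[B(a),B(b)]=\wtd C[p_+a,p_+b]$, and the Rota-Baxter identity for $p_+$ then yields $[B(a),B(b)]=\wtd C\,p_+\big([p_+a,b]+[a,p_+b]-[a,b]\big)$; expanding the right-hand side directly from the definition of $B$ gives $B\big([B(a),b]+[a,B(b)]-[a,b]\big)=\wtd C\,p_+\big([\wtd C p_+a,b]+[a,\wtd C p_+b]-[a,b]\big)$. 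Subtracting, the common $-[a,b]$ cancels and $p_+-\wtd C p_+=C p_+$, so the whole Rota-Baxter identity for $B$ collapses to the single identity
\[
\wtd C\,p_+\big([\,C p_+ a,\,b\,]+[\,a,\,C p_+ b\,]\big)=0,
\]
which I will call $(\star)$, for all $a,b\in\ker C$. Thus the entire content of the Rota-Baxter property is concentrated in $(\star)$.

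The final step is to establish $(\star)$, and this is where the matched-pair structure genuinely enters. Both bracketed terms already lie in $\ker C$: because $C$ is a homomorphism and $a,b\in\ker C$, one has $C[C p_+ a,b]=[C p_+ a,C b]=0$ and likewise $C[a,C p_+ b]=0$, so $P:=[C p_+ a,b]+[a,C p_+ b]\in\ker C=\frkg$, and $(\star)$ is exactly the assertion $B(P)=0$, equivalently $p_+P\in\im C$. To obtain the latter I would expand $P$ in the splitting $\frkg_+\oplus\frkg_-$ using the explicit bicrossed bracket \eqref{ID1}, retain its $\frkg_+$-component, and apply the homomorphism identity for $C$ once more to recognise that component as an element of $\im C=\ker\wtd C$, on which $\wtd C$ vanishes. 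I expect this computation -- the interaction of the two subalgebra decompositions $\frkg_+\oplus\frkg_-$ and $\im C\oplus\ker C$ through the bracket -- to be the main obstacle, since it is the only point where the formal homomorphism and idempotency bookkeeping is insufficient and the matched-pair axioms must actually be used. Finally, the claim for $\wtd B$ requires no further work: from $B+\wtd B=\id_{\frkg}$ we have $\wtd B=\id_{\frkg}-B$, and for weight $-1$ the operator $-\lambda-B=\id-B$ is a Rota-Baxter operator of weight $-1$ as soon as $B$ is, so $\wtd B$ is Rota-Baxter of weight $-1$ as well.
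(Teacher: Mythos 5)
Your reduction is correct, and it takes a genuinely different route from the paper; but the one step you leave unexecuted, the identity $(\star)$, is precisely where your write-up has its gap, and your diagnosis of that step is wrong: it is not the place where the matched-pair axioms must enter, and no expansion through the bicrossed bracket \eqref{ID1} is needed. The formal bookkeeping you declared insufficient already closes it. Recall the paper's standing convention (stated right after Definition \ref{FAC}, and already used both by you and by the paper's own proof) that $\wtd{C}=\id_{\frkg_+\bowtie\frkg_-}-C$ is again a Lie algebra homomorphism. Then for $\xi\in\im C=\ker\wtd{C}$ and $\eta\in\ker C$ one has $C([\xi,\eta]_{\bowtie})=[C(\xi),C(\eta)]_{\bowtie}=[\xi,0]_{\bowtie}=0$ and $\wtd{C}([\xi,\eta]_{\bowtie})=[\wtd{C}(\xi),\wtd{C}(\eta)]_{\bowtie}=[0,\eta]_{\bowtie}=0$, whence $[\xi,\eta]_{\bowtie}\in\im C\cap\ker C=\{0\}$; that is, $[\im C,\ker C]_{\bowtie}=0$. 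Applying this with $\xi=Cp_+a$ and $\eta=b$ (and symmetrically) gives $[Cp_+a,b]_{\bowtie}=[a,Cp_+b]_{\bowtie}=0$ for all $a,b\in\ker C$, so your element $P$ vanishes identically, before $\wtd{C}p_+$ is even applied, and $(\star)$ holds for trivial reasons. With this one line inserted, your proof is complete; note also that both your argument and the paper's lean equally on the assertion that $\wtd{C}$ is a homomorphism, so you are on the same footing as the paper there.

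For comparison, the paper organizes the computation differently and never isolates $(\star)$: using $B+\wtd{B}=\id_{\frkg}$ it rewrites $[B(a),b]_{\bowtie}+[a,B(b)]_{\bowtie}-[a,b]_{\bowtie}=[B(a),B(b)]_{\bowtie}-[\wtd{B}(a),\wtd{B}(b)]_{\bowtie}=\wtd{C}([(x,0),(y,0)]_{\bowtie})-\wtd{C}([(0,u),(0,v)]_{\bowtie})$, then cancels the $C$-parts via the relation $C((x,0))=-C((0,u))$, valid on $\ker C$, to land on $B(([x,y],-[u,v]))=\wtd{C}(([x,y],0))=[B(a),B(b)]_{\bowtie}$. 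Like yours, this proof uses only that $\frkg_+$ and $\frkg_-$ are subalgebras and that $C,\wtd{C}$ are idempotent homomorphisms; neither proof touches the matched-pair compatibilities \eqref{MP1}--\eqref{MP2} beyond the existence of the bicrossed bracket. What your detour buys is conceptual clarity: invoking the classical fact that the projections $p_\pm$ onto complementary subalgebras are themselves Rota-Baxter of weight $-1$ concentrates the entire content of the proposition into the single vanishing statement $(\star)$, which in turn is exactly the commutativity $[\im C,\ker C]_{\bowtie}=0$ of the two complementary ideals. Your treatment of $\wtd{B}$ as $\id_{\frkg}-B$ is also cleaner than the paper's ``it is similar to prove'' remark. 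In short: right skeleton, correct reduction, but a misplaced expectation of difficulty and an unfilled final step, which must be completed as above.
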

\begin{proof}
Let $C$ be a Lie algebra projection on $(\frkg_+,\frkg_-,\rhd,\bhd).$ First, one can readily check that $B+\wtd{B}=\id_{\frkg} .$
For any $(x,u), (y,v)\in \frkg,$ we have 
\begin{equation*}
\begin{aligned}
&[B((x,u)),(y,v)]_{\bowtie}+[(x,u),B((y,v))]_{\bowtie}-[(x,u),(y,v)]_{\bowtie}\\
=&[B((x,u)),B((y,v))]_{\bowtie}+[B((x,u)),\wtd{B}((y,v))]_{\bowtie}+[(x,u),B((y,v))]_{\bowtie}-[(x,u),(y,v)]_{\bowtie}\\
=&[B((x,u)),B((y,v))]_{\bowtie}+[B((x,u)),\wtd{B}((y,v))]_{\bowtie}-[(x,u),\wtd{B}((y,v))]_{\bowtie}\\
=&[B((x,u)),B((y,v))]_{\bowtie}-[\wtd{B}((x,u)),\wtd{B}((y,v))]_{\bowtie}
\end{aligned}
\end{equation*}
Then we have 
\begin{equation*}
\begin{aligned}
&B\left([B((x,u)),(y,v)]_{\bowtie}+[(x,u),B((y,v))]_{\bowtie}-[(x,u),(y,v)]_{\bowtie}\right)\\
=&B\left([\wtd{C}((x,0)), \wtd{C}((y,0))]_{\bowtie}-[\wtd{C}((0,u)),\wtd{C}((0,v))]_{\bowtie}\right)\\
=&B\left(\wtd{C}([(x,0), (y,0)]_{\bowtie})-\wtd{C}([(0,u),(0,v)]_{\bowtie})\right)\\
=&B(([x,y],-[u,v])-C([(x,0),(y,0)]_{\bowtie})+C([(0,u),(0,v)]_{\bowtie}))\\ 
\end{aligned}
\end{equation*}
Finally, we have
\begin{equation*}
\begin{aligned}
&B\left([B((x,u)),(y,v)]_{\bowtie}+[(x,u),B((y,v))]_{\bowtie}-[(x,u),(y,v)]_{\bowtie}\right)\\
=&B(([x,y],-[u,v])-[C(-(0,u)),-C((0,v))]_{\bowtie}+[C((0,u)),C((0,v))]_{\bowtie})\\
=&B(([x,y],-[u,v]))=\wtd{C}(([x,y],0))=[\wtd{C}((x,0)),\wtd{C}((y,0))]_{\bowtie}=[B((x,u)),B((y,v))]_{\bowtie}.
\end{aligned}
\end{equation*}

Therefore, $B$ is a Rota-Baxter operator of weight $-1$ on $\frkg.$ It is similar to prove that $\wtd{B}$ is also a Rota-Baxter operator of weight $-1.$
\end{proof}

Here is the main result of this section. In the next theorem, we show that for any matched pair of Lie algebras on a Rota-Baxter Lie algebra of weight $-1,$ there are two Lie algebra projections on it such that the sum of the projections is the identity map. 
\begin{thm}\label{FL}
Let $(\frkg,B)$ be a Rota-Baxter Lie algebra of weight $-1$ and $(\frkg_+,\frkg_-,\rhd,\bhd)$ be the matched pair of Lie algebras on $(\frkg,B).$ Then the operator $C:\frkg_+\bowtie \frkg_-\to \frkg_+\bowtie \frkg_-$ given by 
\begin{equation*}
C\left((B(x_1),\wtd{B}(x_2))\right)=\left(B(B(x_1)+\wtd{B}(x_2)),\wtd{B}(B(x_1)+\wtd{B}(x_2))\right),\ \forall x_1,x_2\in\frkg,
\end{equation*}
and the operator $\wtd{C}:\frkg_+\bowtie \frkg_-\to G_+\bowtie G_-$ given by 
\begin{equation*}
\wtd{C}\left((B(x_1),\wtd{B}(x_2))\right)=\left(B\circ \wtd{B}(x_1-x_2),-\wtd{B}\circ B(x_1-x_2)\right),\ \forall x_1,x_2\in \frkg,
\end{equation*}
are Lie algebra projections on $(\frkg_+,\frkg_-,\rhd,\bhd)$ such that $C+\wtd{C}=\id_{\frkg_+\bowtie \frkg_-}.$
\end{thm}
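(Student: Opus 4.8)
The plan is to realize $C$ and $\wtd{C}$ as the two projections coming from a decomposition of $\frkg_+\bowtie\frkg_-$ into a direct sum of two commuting ideals; once such a decomposition is in place, being an idempotent Lie algebra homomorphism (Definition \ref{FAC}) is automatic. Introduce $\phi,\psi\colon\frkg\to\frkg_+\bowtie\frkg_-$ by $\phi(g)=(B(g),\wtd{B}(g))$ and $\psi(g)=(-B\wtd{B}(g),\wtd{B}B(g))$, and put $\frkg_1=\im\phi$, $\frkg_2=\im\psi$.

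First I would dispose of the formal facts. Using $B+\wtd{B}=\id$ and $B\wtd{B}=\wtd{B}B$, a one-line substitution gives $C+\wtd{C}=\id_{\frkg_+\bowtie\frkg_-}$, together with $C((B(x_1),\wtd{B}(x_2)))=\phi(B(x_1)+\wtd{B}(x_2))$ and $\wtd{C}((B(x_1),\wtd{B}(x_2)))=-\psi(x_1-x_2)$; hence $\im C=\frkg_1$ and $\im\wtd{C}=\frkg_2$. Since the two coordinates of $\phi(g)$ sum back to $g$, we get $C\circ\phi=\phi$ and therefore $C^2=C$; consequently $\wtd{C}=\id-C$ is idempotent, $C\wtd{C}=\wtd{C}C=0$, and $\frkg_+\bowtie\frkg_-=\im C\oplus\im\wtd{C}=\frkg_1\oplus\frkg_2$ as vector spaces, with $C$ and $\wtd{C}$ the associated projections onto $\frkg_1$ and $\frkg_2$.

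Next I would prove that $\frkg_1$ and $\frkg_2$ are Lie subalgebras that commute. Closure of $\frkg_1$ is exactly \eqref{MPR}, which for $\lambda=-1$ reads $[\phi(g),\phi(h)]_{\bowtie}=\phi([g,h])$. For $\frkg_2$ I would start from Lemma \ref{BBW}; rewriting its right-hand side with \eqref{RBLRS} for $B$ and for $\wtd{B}$ and with Lemma \ref{RBRL} converts it into $[\psi(g),\psi(h)]_{\bowtie}=\lambda\,\psi([g,h]_{B})$, which lies in $\frkg_2$. The heart of the argument is the vanishing $[\frkg_1,\frkg_2]_{\bowtie}=0$, i.e. $[\phi(g),\psi(h)]_{\bowtie}=0$. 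I would expand this through \eqref{ID1} with $(x_1,x_2)=(g,g)$ for $\phi(g)$ and $(y_1,y_2)=(-\wtd{B}(h),B(h))$ for $\psi(h)$. In the $\frkg_+$-coordinate, the contribution $[B(g),B\wtd{B}(h)]$ is first opened by \eqref{RBLRS}; the resulting bracket $[g,B\wtd{B}(h)]$ cancels against the term coming from $-B([\wtd{B}B(h),g])$ because $B\wtd{B}=\wtd{B}B$, and the remaining terms collapse to $0$ after using $\wtd{B}=\id-B$ and one application of \eqref{RBLRS} for $\wtd{B}$. The $\frkg_-$-coordinate vanishes by the same computation with the roles of $B$ and $\wtd{B}$ interchanged (note $\wtd{B}$ is itself a weight $-1$ Rota-Baxter operator whose complementary operator is $B$, and the coordinate swap $(a,b)\mapsto(b,a)$ is a Lie algebra isomorphism exchanging the two coordinates of the bicrossed product).

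Finally I would assemble: $\frkg_+\bowtie\frkg_-=\frkg_1\oplus\frkg_2$ is a direct sum of subalgebras with $[\frkg_1,\frkg_2]_{\bowtie}=0$, so $\frkg_1$ and $\frkg_2$ are ideals; the projection onto an ideal summand of such a decomposition is an idempotent Lie algebra homomorphism, so $C$ and $\wtd{C}$ are Lie algebra projections on $(\frkg_+,\frkg_-,\rhd,\bhd)$, and $C+\wtd{C}=\id$ as already noted. The one genuinely computational step, and the expected obstacle, is the identity $[\frkg_1,\frkg_2]_{\bowtie}=0$: it is the only place where the Rota-Baxter axiom for both $B$ and $\wtd{B}$ must be combined with the Jacobi identity, and keeping track of the cancellations is what requires care. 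Everything surrounding it is formal bookkeeping about idempotents and ideal decompositions.
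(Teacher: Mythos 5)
Your proposal is correct, and it takes a genuinely different route from the paper. The paper proves the homomorphism property of $\wtd{C}$ head-on: it expands $\wtd{C}$ applied to a general bracket $[(B(x_1),\wtd{B}(x_2)),(B(y_1),\wtd{B}(y_2))]_{\bowtie}$ via \eqref{ID1}, \eqref{RBLRS}, \eqref{BTB} and Lemma \ref{RBRL}, reduces it to $-\left(B([\wtd{B}(x_1-x_2),\wtd{B}(y_1-y_2)]),\wtd{B}([B(x_1-x_2),B(y_1-y_2)])\right)$, and matches this against $[\wtd{C}(\cdot),\wtd{C}(\cdot)]_{\bowtie}$ computed from Lemma \ref{BBW} (well-definedness of $\wtd{C}$ is checked separately on $\ker B$ and $\ker\wtd{B}$, and the statement for $C$ is left as ``straightforward''). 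You instead exhibit $\frkg_+\bowtie\frkg_-$ as a direct sum of the two subalgebras $\frkg_1=\im\phi$ and $\frkg_2=\im\psi$, get closure of $\frkg_1$ from \eqref{MPR} and of $\frkg_2$ from Lemma \ref{BBW} (your rewriting $[\psi(g),\psi(h)]_{\bowtie}=\lambda\,\psi([g,h]_{B})$ is correct, via $[\wtd{B}(g),\wtd{B}(h)]=\wtd{B}([g,h]_{\wtd{B}})=-\wtd{B}([g,h]_B)$ and $[B(g),B(h)]=B([g,h]_B)$), and concentrate all the work in the single identity $[\phi(g),\psi(h)]_{\bowtie}=0$, after which the homomorphism property of both $C$ and $\wtd{C}$ is formal. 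I checked that identity: in the $\frkg_+$-coordinate, expanding $-[B(g),B\wtd{B}(h)]$ by \eqref{RBLRS} produces $-B([g,B\wtd{B}(h)])$, which cancels $-B([\wtd{B}B(h),g])$ using $B\wtd{B}=\wtd{B}B$, and the residue $-B([B(g),\wtd{B}(h)])+B([g,\wtd{B}(h)])-B([\wtd{B}(g),\wtd{B}(h)])$ vanishes upon splitting $g=B(g)+\wtd{B}(g)$; the $\frkg_-$-coordinate is the same computation with $B$ and $\wtd{B}$ interchanged, exactly as you say. What each approach buys: the paper's is a self-contained direct verification, while yours makes explicit the stronger structural fact $[\im C,\im\wtd{C}]_{\bowtie}=0$ (only implicit in the paper, where it follows a posteriori since $[C u,\wtd{C}v]\in\ker C\cap\ker\wtd{C}=0$ once both maps are homomorphisms), treats $C$ and $\wtd{C}$ symmetrically in one stroke, handles well-definedness more cleanly via the representation-free formula $C((a,u))=\phi(a+u)$, and dovetails with the decomposition $\frkg_1\oplus\frkg_2$ used later in Corollary \ref{RBIS} and Theorem \ref{FN}.
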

\begin{proof}
First, let us prove $\wtd{C}$ is well defined. For any $x_1\in \ker B$ and $x_2\in \frkg,$ we have 
\begin{equation*}
\begin{aligned}
\wtd{C}\left( (B(x_1),\wtd{B}(x_2)) \right)&=\left(B\circ \wtd{B}(x_1-x_2),-\wtd{B}\circ B(x_1-x_2)\right)\\
&=\left(B\circ \wtd{B}(x_1)-B\circ\wtd{B}(x_2),-\wtd{B}\circ B(x_2)\right)\\
&=\left(\wtd{B}\circ B(x_1)-B\circ \wtd{B}(x_2),-\wtd{B}\circ B(x_2)\right)\\
&=\left(-B\circ \wtd{B}(x_2),-\wtd{B}\circ B(x_2)\right)\\
&=\wtd{C}\left(   (0,\wtd{B}(x_2))\right).
\end{aligned}
\end{equation*}
It is similar to prove
$$\wtd{C}\left( (B(x_1),\wtd{B}(x_2))\right)=\wtd{C}\left( (B(x_1),0)\right)$$ for any $x_1\in \frkg$ and $x_2\in \ker\wtd{B}$. This shows that $\wtd{C}$ is well defined. Then we verify that $C$ is a Lie algebra homomorphism. On the one hand, for any $x_1,x_2,y_1,y_2\in \frkg_+\bowtie \frkg_-,$ we have 
\begin{equation*} 
\begin{aligned}
&[(B(x_1),\wtd{B}(x_2)),(B(y_1),\wtd{B}(y_2))]_{\bowtie}\\=&\left([B(x_1),B(y_1)],[\wtd{B}(x_2),\wtd{B}(y_2)]\right)+\left(\wtd{B}(x_2)\bhd B(y_1),B(x_1)\rhd \wtd{B}(y_2) \right)-\left(\wtd{B}(y_2)\bhd B(x_1),B(y_1)\rhd \wtd{B}(x_2) \right)\\
=&\left([B(x_1),B(y_1)],[\wtd{B}(x_2),\wtd{B}(y_2)]\right)+\left(B([\wtd{B}(x_2),y_1]),\wtd{B}([B(x_1),y_2])\right)-\left( B([\wtd{B}(y_2),x_1]),\wtd{B}([B(y_1),x_2])\right)\\
=&\left(B([x_1,y_1]_{B}),\wtd{B}([x_2,y_2]_{\wtd{B}})\right)+\left(B([\wtd{B}(x_2),y_1]),\wtd{B}([B(x_1),y_2])\right)-\left( B([\wtd{B}(y_2),x_1]),\wtd{B}([B(y_1),x_2])\right)
\end{aligned}
\end{equation*}
It follows from \eqref{RBLRS}, \eqref{BTB} and Lemma \ref{RBRL} that 
\begin{equation*}
\begin{aligned}
&\wtd{C}\left([(B(x_1),\wtd{B}(x_2)),(B(y_1),\wtd{B}(y_2))]_{\bowtie}  \right)\\
=&\left(B\circ \wtd{B}([x_1,y_1]_{B}+[x_2,y_2]_{B}),-\wtd{B}\circ B([x_1,y_1]_{B}+[x_2,y_2]_{B}  \right)\\&+\left(B\circ \wtd{B}([\wtd{B}(x_2),y_1]-[B(x_1),y_2]),-\wtd{B}\circ B([\wtd{B}(x_2),y_1]-[B(x_1),y_2])  \right) \\
&-\left(B\circ \wtd{B}([\wtd{B}(y_2),x_1]-[B(y_1),x_2]),-\wtd{B}\circ B([\wtd{B}(y_2),x_1]-[B(y_1),x_2] ) \right) \\
=&\left(B\circ \wtd{B}([x_1,y_1]_{B}+[x_2,y_2]_{B}),-\wtd{B}\circ B([x_1,y_1]_{B}+[x_2,y_2]_{B})  \right)\\&-\left(B\circ \wtd{B}([x_2,y_1]_{B}),-\wtd{B}\circ B([x_2,y_1]_{B})\right)-\left(B\circ\wtd{B}([x_1,y_2]_{B}),-\wtd{B}\circ B([x_1,y_2]_{B}) \right)\\
=&\left(B\circ \wtd{B}([x_1-x_2,y_1-y_2]_{B}),-\wtd{B}\circ B([x_1-x_2,y_1-y_2]_{B}) \right)\\
=&-\left(B\circ [\wtd{B}(x_1-x_2),\wtd{B}(y_1-y_2)]),\wtd{B}\circ [B(x_1-x_2),B(y_1-y_2)]\right).
\end{aligned}
\end{equation*}
On the other hand, by Lemma \ref{BBW}, we have
\begin{equation*}
\begin{aligned}
&[\wtd{C}\left([(B(x_1),\wtd{B}(x_2))\right),C\left(B(y_1),\wtd{B}(y_2)\right)]_{\bowtie}\\
=&[\left(-B\circ \wtd{B}(x_1-x_2),\wtd{B}\circ B(x_1-x_2)\right),\left(-B\circ \wtd{B}(y_1-y_2),\wtd{B}\circ B(y_1-y_2)\right)]_{\bowtie}\\
=&-\left(B([\wtd{B}(x_1-x_2),\wtd{B}(y_1-y_2)]),\wtd{B}([B(x_1-x_2),B(y_1-y_2)] \right) .
\end{aligned}
\end{equation*}
This proves $\wtd{C}$ is an idempotent Lie algebra homomorphism.
Finally, it is straightforward to see that $C$ 
is also a Lie algebra projection on $(\frkg_+,\frkg_-,\rhd,\bhd)$. 
\end{proof}

Now we prove that every matched pair of Lie algebras on Rota-Baxter Lie algebras of weight $-1$ induces a Rota-Baxter Lie algebra of weight $-1$, which is Rota-Baxter isomorphic to a Rota-Baxter Lie algebra whose Rota-Baxter operator is idempotent. 

\begin{cor}\label{RBIS}
With the notations given in Theorem \ref{FL}, Corollary \ref{BBIS} and Theorem \ref{FN}, denote $\frkg_1=\im C$, $\frkg_2=\im \wtd{C}.$ Let $p_1:\frkg_1\oplus \frkg_2\to \frkg_1\bowtie \frkg_2$ be the projection from $\frkg_1\oplus \frkg_2$ to $\frkg_1$ and $p_2:\frkg_1\oplus \frkg_2\to \frkg_1\bowtie \frkg_2$ be the projection from $\frkg_1\oplus \frkg_2$ to $\frkg_2$. Then the following statements hold:
\begin{itemize}
\item[(a)]
$(\frkg_+\bowtie \frkg_-,C)$ is a Rota-Baxter Lie algebra of weight $-1$ that is Rota-Baxter isomorphic to $(\frkg_1\oplus \frkg_2,p_1);$ 
\item[(b)]
$(\frkg_+\bowtie \frkg_-,\wtd{C})$ is Rota-Baxter Lie algebra of weight $-1$ that is Rota-Baxter isomorphic to $(\frkg_1\oplus \frkg_2,p_2).$
\end{itemize}
\end{cor}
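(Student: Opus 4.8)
The plan is to verify the Rota-Baxter conditions abstractly and then exhibit the two isomorphisms as the identity map under a canonical identification of bicrossed products, so that essentially no computation is required beyond bookkeeping with idempotents.

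First I would record the structure of the decomposition. By Theorem \ref{FL}, both $C$ and $\wtd C$ are Lie algebra projections on $(\frkg_+,\frkg_-,\rhd,\bhd)$ with $C+\wtd C=\id_{\frkg_+\bowtie\frkg_-}$. Since a Lie algebra projection is by definition an idempotent Lie algebra homomorphism, its image and kernel are Lie subalgebras; idempotency together with $C+\wtd C=\id$ gives $\im\wtd C=\ker C$ and $\im C=\ker\wtd C$, so $\frkg_1=\im C$ and $\frkg_2=\im\wtd C$ are Lie subalgebras of $\frkg_+\bowtie\frkg_-$ satisfying $\frkg_+\bowtie\frkg_-=\frkg_1\oplus\frkg_2$. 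As noted after Definition \ref{FAC}, $C$ and $\wtd C$ are Rota-Baxter operators of weight $-1$ on $\frkg_+\bowtie\frkg_-$, so $(\frkg_+\bowtie\frkg_-,C)$ and $(\frkg_+\bowtie\frkg_-,\wtd C)$ are already Rota-Baxter Lie algebras of weight $-1$, which disposes of the first assertion.

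Next I would set up the matched pair $(\frkg_1,\frkg_2)$. Because $\frkg_1$ and $\frkg_2$ are complementary Lie subalgebras of $\frkg_+\bowtie\frkg_-$, Remark \ref{MM} equips $(\frkg_1,\frkg_2)$ with a matched pair structure whose associated bicrossed product $\frkg_1\bowtie\frkg_2$ has, by construction, the same underlying vector space and the same Lie bracket as $\frkg_+\bowtie\frkg_-$; that is, the canonical linear identification $\frkg_+\bowtie\frkg_-=\frkg_1\bowtie\frkg_2$ is a Lie algebra isomorphism. Under this identification the projection $p_1$ onto $\frkg_1$ along $\frkg_2$ is exactly $C$: for $w=w_1+w_2$ with $w_1\in\frkg_1=\im C$ and $w_2\in\frkg_2=\ker C$ one has $Cw_1=w_1$ and $Cw_2=0$, so $Cw=w_1=p_1(w)$, and likewise $p_2=\wtd C$. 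Taking the identity as the candidate isomorphism, the equalities $C=p_1$ and $\wtd C=p_2$ give $\id\circ C=p_1\circ\id$ and $\id\circ\wtd C=p_2\circ\id$, which is precisely condition \eqref{RBLH}; being bijective, this map yields the Rota-Baxter isomorphisms $(\frkg_+\bowtie\frkg_-,C)\cong(\frkg_1\oplus\frkg_2,p_1)$ and $(\frkg_+\bowtie\frkg_-,\wtd C)\cong(\frkg_1\oplus\frkg_2,p_2)$.

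I do not expect a genuine obstacle here; the only point that needs care is the identification $\frkg_1\bowtie\frkg_2=\frkg_+\bowtie\frkg_-$. One must confirm that the representations $\rhd,\bhd$ produced by Remark \ref{MM} from the ambient bracket reassemble, through the bicrossed product formula, into the original bracket restricted to $\frkg_1\oplus\frkg_2$. This is exactly the converse direction already invoked in the equivalent characterization of Lie algebra projections, so no new calculation is needed, and everything else reduces to standard facts about idempotent operators and their associated projections.
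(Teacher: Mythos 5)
Your proposal is correct and follows essentially the same route as the paper: the paper's isomorphism is precisely the map $\pi(w)=\bigl(C(w),\wtd{C}(w)\bigr)$, which is nothing but your canonical identification of the internal decomposition $\frkg_+\bowtie\frkg_-=\im C\oplus\im\wtd{C}$ with the external direct sum, and the paper verifies $\pi\circ C=p_1\circ\pi$ via $C\circ\wtd{C}=\wtd{C}\circ C=0$ exactly where you observe $C=p_1$ and $\wtd{C}=p_2$ under that identification. Your packaging is a slightly cleaner, coordinate-free rendering of the same argument (with the identification of brackets guaranteed since $\ker C$ and $\ker\wtd{C}$ are ideals, so $[\frkg_1,\frkg_2]=0$), so no gap remains.
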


\begin{proof}
First, one can readily check that $(\frkg_+\bowtie \frkg_-,C)$ is a Rota-Baxter Lie algebra of weight $-1.$
Define the operator $\pi: (\frkg_+\bowtie \frkg_-,C)\to (\frkg_1\oplus \frkg_2,p_1)$ by
\begin{equation*}
\pi((B(x),\wtd{B}(y)))=\left(C\left((B(x),\wtd{B}(y))\right),\wtd{C}\left((B(x),\wtd{B}(y))\right)\right),\ \forall x,y\in \frkg.
\end{equation*}
It follows from Theorem \ref{FL} that $\pi$ is a Lie algebra homomorphism. Now we verify that $\pi$ is bijective. For any $x,y\in \frkg,$ if $\pi((B(x),\wtd{B}(y)))=(0,0),$ it follows from \ref{FL} that $$\left(B(x)),\wtd{B}(y)\right)=C\left((B(x),\wtd{B}(y))\right)+\wtd{C}\left((B(x),\wtd{B}(y))\right)=(0,0).$$ This proves that $\pi$ is injective. For any $(x_1,y_1), (x_2,y_2)\in \frkg_+\bowtie \frkg_-,$ by Theorem \ref{FL}, we have $$
\begin{aligned}
\pi(C((x_1,y_1))+\wtd{C}((x_2,y_2)))&=\left(C(C((x_1,y_1))+\wtd{C}((x_2,y_2))),\wtd{C}(C((x_1,y_1))+\wtd{C}((x_2,y_2)))\right)\\&=\left(C((x_1,y_1)),\wtd{C}((x_2,y_2))\right).
\end{aligned}
$$ This implies that $\pi$ is bijective. Then we show that $\pi$ is compatible with $C$ and $p_1.$ It follows directly from the definition that 
\begin{equation}\label{CPP}
C\circ\wtd{C}\left((B(x),\wtd{B}(y))\right)=\wtd{C}\circ C\left((B(x),\wtd{B}(y))\right)=(0,0), \forall x,y\in \frkg.
\end{equation}
Then for any $x,y\in\frkg,$  we have 
\begin{equation*}
\begin{aligned}
\pi\left(C\left((B(x),\wtd{B}(y))\right) \right)&=\left(C^{2}\left((B(x),\wtd{B}(y))\right),\wtd{C}\circ C\left(B(x),\wtd{B}(y)\right) \right)\\
&=\left( C\left((B(x),\wtd{B}(y))\right),0   \right)\ (\textit{by \eqref{CPP}})\\
&=p_1\left( C\left((B(x),\wtd{B}(y))\right),\wtd{C}\left((B(x),\wtd{B}(y))\right)\right)\\
&=p_1\circ \pi\left((B(x),\wtd{B}(y))\right).
\end{aligned}
\end{equation*}
Therefore, $(\frkg_+\bowtie \frkg_-,C)$ is Rota-Baxter isomorphic to $(\frkg_1\oplus \frkg_2,p_1)$ and it is similar to prove that $(\frkg_+\bowtie \frkg_-,\wtd{C})$ is Rota-Baxter isomorphic to $(\frkg_1\oplus \frkg_2,p_2).$
\end{proof}

In the next corollary, we prove that there is a Rota-Baxter Lie algebra of weight $-1$ on $\im \wtd{C}$ that is Rota-Baxter isomorphic to $(\frkg, B).$ 

\begin{cor}\label{BBIS}
Let $(\frkg,B)$ be a Rota-Baxter Lie algebra of weight $-1$ and $(\frkg_+,\frkg_-,\rhd,\bhd)$ be the matched pair of Lie algebras on $(\frkg,B).$ Let $C$ be the operator given in Theorem \ref{FL} and $\frkg_1=\im C.$ Define $B_1:\frkg_1\to \frkg_1$ and $\wtd{B_1}:\frkg_1\to\frkg_1$ by 
$$
\begin{aligned}
B_1((x,u))=&C((x,0)),\\
\wtd{B_1}((x,u))=&C((0,u)),
\end{aligned}
$$
for any $(x,u)\in \frkg_1.$
Then the following holds:
\begin{itemize}
\item[(a)]
$(\frkg_1,B_1)$ is a Rota-Baxter Lie algebra of weight $-1$ that is Rota-Baxter isomorphic to $(\frkg,B);$
\item[(b)]
$(\frkg_1,\wtd{B_1})$ is a Rota-Baxter Lie algebra of weight $-1$ that is Rota-Baxter isomorphic to $(\frkg,\wtd{B}).$
\end{itemize}
\end{cor}

\begin{proof}
By Proposition \ref{MR}, we know that $(\frkg_1,B_1)$ is a Rota-Baxter Lie algebra of weight $-1.$ Define the operator $\pi:(\frkg,B)\to (\frkg_1,B_1)$
\begin{equation*}
\pi(x)=(B(x),\wtd{B}(x)),\ \forall x\in \frkg.
\end{equation*}
It follows directly from the definition that $\pi$ is a bijection. Then by \eqref{MPR}, we get that $\pi$ is a Rota-Baxter Lie algebra homomorphism. Finally, we show that $B_1\circ \pi=\pi\circ B.$ For any $x\in \frkg,$ we have 
\begin{equation*}
B_1\circ \pi(x)=C((B(x),0)=(B\circ B(x),\wtd{B}\circ B(x))=(B\circ B(x),B\circ\wtd{B}(x))=\pi\circ B(x).
\end{equation*}
Therefore, $\pi$ is a Rota-Baxter Lie algebra isomorphism. This proves (a), it is similar to prove (b).
\end{proof}

 In the next proposition, we consider the image of $\wtd{C},$ where $\wtd{C}$ is defined in Theorem \ref{FL}.

\begin{pro}
Let $(\frkg,B)$ be a Rota-Baxter Lie algebra of weight $-1$ and $(\frkg_+,\frkg_-,\rhd,\bhd)$ be the matched pair of Lie algebras on $(\frkg,B).$ Let $\wtd{C}$ be the operator given in Theorem \ref{FL} and $\frkg_2=\im \wtd{C}.$ Then as a set, the following identity holds:
$$\frkg_2=\Big\{(x,-x)\in \frkg_+\bowtie \frkg_-|x\in\frkg_+\cap \frkg_-\Big\}.$$
\end{pro}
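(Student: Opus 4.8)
The plan is to prove the two set inclusions separately, both driven by the single algebraic fact that at weight $-1$ one has $\wtd B = \id - B$, whence $B\circ\wtd B = \wtd B\circ B = B - B^2$. I would record this commutation relation at the outset, since it is exactly what forces the two components of any element of $\frkg_2$ to be negatives of one another.

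For the inclusion $\frkg_2 \subseteq \{(x,-x)\mid x\in\frkg_+\cap\frkg_-\}$, I would take an arbitrary element of $\frkg_2 = \im\wtd C$. By the formula for $\wtd C$ in Theorem \ref{FL}, it has the form $\wtd C\big((B(x_1),\wtd B(x_2))\big) = \big(B\circ\wtd B(w),\,-\wtd B\circ B(w)\big)$ with $w = x_1 - x_2$. Setting $x := B\circ\wtd B(w)$ and invoking the commutation relation, the second component is $-\wtd B\circ B(w) = -B\circ\wtd B(w) = -x$, so the element is $(x,-x)$. Finally $x = B(\wtd B(w)) \in \im B = \frkg_+$ while simultaneously $x = \wtd B(B(w)) \in \im\wtd B = \frkg_-$, so $x \in \frkg_+\cap\frkg_-$, as required.

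The reverse inclusion is the step that carries the actual content. Given $x \in \frkg_+\cap\frkg_-$, the goal is to produce a preimage, i.e. some $w \in \frkg$ with $B\circ\wtd B(w) = x$; granting this, $\wtd C\big((B(w),0)\big) = (x,-x)$ lands in $\frkg_2$. The membership $x \in \frkg_+ = \im B$ gives $x = B(a)$ for some $a$, and $x \in \frkg_- = \im\wtd B$ gives $x = \wtd B(b)$ for some $b$; a priori $a$ and $b$ are unrelated. The key observation is that $w = a + b$ already works: using the commutation relation, $B\circ\wtd B(a) = \wtd B(B(a)) = \wtd B(x)$ and $B\circ\wtd B(b) = B(\wtd B(b)) = B(x)$, so $B\circ\wtd B(a+b) = \wtd B(x) + B(x) = (\wtd B + B)(x) = x$, where the last equality is precisely $B + \wtd B = \id$.

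I expect the main obstacle to be exactly this construction of the preimage in the reverse inclusion: the two defining witnesses $a$ and $b$ arise from unrelated image conditions, and it is the weight-$(-1)$ relation $B + \wtd B = \id$ that makes their sum the sought preimage $w$. Everything else is direct substitution into the definition of $\wtd C$ from Theorem \ref{FL} together with the commutation of $B$ and $\wtd B$.
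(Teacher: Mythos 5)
Your proof is correct and takes essentially the same approach as the paper: both arguments reduce the claim to showing $\im (B\circ\wtd B)=\frkg_+\cap\frkg_-$, and your preimage $w=a+b$ built from the two witnesses $x=B(a)=\wtd B(b)$ is exactly the paper's $x+y$ (there obtained by writing $x=B(x)+\wtd B(x)=\wtd B(x+y)$ and then applying $B$). The only cosmetic difference is that you state the commutation relation $B\circ\wtd B=\wtd B\circ B$ explicitly at the outset, where the paper uses it implicitly.
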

\begin{proof}
It follows directly from the definition of $B$ and $\wtd{B}$ that $\im B\circ \wtd{B}\subseteq \frkg_+\cap \frkg_-.$ For any $x,y\in \frkg,$ if $B(x)=\wtd{B}(y),$ then we have $x=B(x)+\wtd{B}(x)=\wtd{B}(x+y).$ It follows that $B(x)=B\circ \wtd{B}(x+y).$ This implies $ \frkg_+\cap \frkg_-\subseteq \im B\circ \wtd{B}.$ Therefore, as sets, we obtain that $\im B\circ \wtd{B}= \frkg_+\cap \frkg_-$ and 
$$\frkg_2=\Big\{(x,-x)\in \frkg_+\bowtie \frkg_-\big |x\in\frkg_+\cap \frkg_-\Big\}.$$ 
\end{proof}

The next Lemma will be used in the proof of Proposition \ref{OB}.

\begin{lem}\label{B1}
Let $(\frkg,B)$ be a Rota-Baxter Lie algebra of weight $\lambda.$ Then the operator $B:\frkg_B\to \frkg_B$ 
is a Rota-Baxter operator of weight $\lambda$ on $\frkg_B.$
\end{lem}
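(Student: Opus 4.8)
The plan is to verify the single Rota-Baxter identity \eqref{RBLRS} for $B$ with respect to the descendent bracket $[\cdot,\cdot]_B$ in place of the original bracket $[\cdot,\cdot]$. Concretely, recalling $[x,y]_B=[B(x),y]+[x,B(y)]+\lambda[x,y]$, I must establish
\begin{equation*}
[B(x),B(y)]_B = B\big([B(x),y]_B + [x,B(y)]_B + \lambda[x,y]_B\big), \quad \forall x,y\in\frkg.
\end{equation*}

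The key observation that makes this transparent is that the defining identity \eqref{RBLRS} can be read as the statement that $B$ is a Lie algebra homomorphism from the descendent Lie algebra $\frkg_B$ to $\frkg$; that is, \eqref{RBLRS} is precisely
\begin{equation*}
B([x,y]_B) = [B(x),B(y)], \quad \forall x,y\in\frkg.
\end{equation*}
First I would record this reformulation, since it allows me to evaluate $B$ on any descendent bracket simply by passing to the original bracket of the images.

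Next I would compute the two sides of the desired identity separately. For the left-hand side, expanding $[B(x),B(y)]_B$ directly from the definition of the descendent bracket (taking the two arguments to be $B(x)$ and $B(y)$) gives
\begin{equation*}
[B(x),B(y)]_B = [B^2(x),B(y)] + [B(x),B^2(y)] + \lambda[B(x),B(y)].
\end{equation*}
For the right-hand side, I would apply the homomorphism property $B([a,b]_B)=[B(a),B(b)]$ termwise to $B([B(x),y]_B)$, to $B([x,B(y)]_B)$, and to $\lambda B([x,y]_B)$, which produces exactly the same three terms $[B^2(x),B(y)] + [B(x),B^2(y)] + \lambda[B(x),B(y)]$. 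Comparing the two expressions shows the sides coincide, and hence $B$ is a Rota-Baxter operator of weight $\lambda$ on $\frkg_B$.

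There is essentially no genuine obstacle here: once \eqref{RBLRS} is recognized as a homomorphism condition, the verification reduces to a two-line bookkeeping of three bracket terms. The only points requiring mild care are the double application of $B$ (the appearance of $B^2$) and tracking the weight coefficient $\lambda$. A reader preferring a fully elementary route can instead expand both sides directly in the original bracket, at the cost of carrying six terms on each side and checking that the two expansions agree.
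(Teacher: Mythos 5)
Your proof is correct and takes essentially the same route as the paper: the paper's proof likewise expands $[B(x),B(y)]_B$ via the definition of the descendent bracket and identifies the result with $B\big([B(x),y]_B+[x,B(y)]_B+\lambda[x,y]_B\big)$, implicitly using \eqref{RBLRS} in the homomorphism form $B([x,y]_B)=[B(x),B(y)]$. Your only addition is to state that homomorphism reformulation explicitly, which makes transparent a step the paper leaves implicit.
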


\begin{proof}
For any $x,\ y\in \frkg,$ we have
\begin{equation*}
\begin{aligned}
\relax [B(x),B(y)]_{B}&=[B\circ B(x),B(y)]+[B(x),B\circ B(y)]+\lambda [B(x),B(y)]\\
&=B([B(x),y]_{B}+[x,B(y)]_{B}+\lambda [x,y]_{B})\ (\textit{by \eqref{RBLRS}}).
\end{aligned}
\end{equation*}
This proves $B$ is a Rota-Baxter operator on $\frkg_B$ of weight $\lambda.$
\end{proof}
The Rota-Baxter Lie algebra $(\frkg_B,B)$ given in the above lemma is called the descendent Rota-Baxter Lie algebra of $(\frkg,B).$
\begin{lem}
Let $(\frkg,B)$ be a Rota-Baxter Lie algebra of weight $\lambda.$ Let $\frkh_+ + \frkh_-$ be the Lie subalgebra generated by $\frkh_+$ and $\frkh_-.$ Then $\frkh_+ + \frkh_-$ is an ideal of $\frkg_B$ and an ideal of $\frkg_{\wtd{B}}.$
\end{lem}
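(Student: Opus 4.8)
The plan is to reduce the statement to the sharper claim that each of $\frkh_+$ and $\frkh_-$ is \emph{separately} an ideal of the descendent Lie algebra $\frkg_B$; once this is known, $\frkh_++\frkh_-$ (the linear span of $\frkh_+\cup\frkh_-$, which is what the generated subalgebra will turn out to be) is a sum of two ideals and hence an ideal.

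First I would unwind the descendent bracket $[\cdot,\cdot]_B$ on elements of $\frkh_+=\ker\wtd{B}$. If $x\in\frkh_+$, then $\wtd{B}(x)=0$, that is $B(x)=-\lambda x$ (recall $\wtd{B}=-\lambda-B$), so for every $z\in\frkg$
$$[z,x]_B=[B(z),x]+[z,B(x)]+\lambda[z,x]=[B(z),x]-\lambda[z,x]+\lambda[z,x]=[B(z),x].$$
By the second assertion of Lemma~\ref{RBSY}, $[B(z),x]\in\frkh_+$, so $[z,x]_B\in\frkh_+$ and $\frkh_+$ is an ideal of $\frkg_B$. Dually, for $u\in\frkh_-=\ker B$ we have $B(u)=0$, and substituting $B(z)=-\lambda z-\wtd{B}(z)$ gives
$$[z,u]_B=[B(z),u]+\lambda[z,u]=-\lambda[z,u]-[\wtd{B}(z),u]+\lambda[z,u]=-[\wtd{B}(z),u].$$
Now the first assertion of Lemma~\ref{RBSY} gives $[\wtd{B}(z),u]\in\frkh_-$, whence $[z,u]_B\in\frkh_-$ and $\frkh_-$ is an ideal of $\frkg_B$ as well.

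To finish, note that the sum of two ideals of a Lie algebra is again an ideal; thus $\frkh_++\frkh_-$ is an ideal of $\frkg_B$, in particular a Lie subalgebra containing $\frkh_+\cup\frkh_-$ and contained in every subalgebra containing $\frkh_+\cup\frkh_-$, so it coincides with the Lie subalgebra generated by $\frkh_+$ and $\frkh_-$. This proves the claim. I do not anticipate a genuine obstacle: each computation is a direct substitution, and the only point worth flagging is that the two halves of Lemma~\ref{RBSY} are exactly calibrated to absorb the surviving terms $[B(z),x]$ and $[\wtd{B}(z),u]$ once the $\lambda[z,\cdot]$ contributions cancel. As a structural remark, one also checks immediately that $[x,u]_B=[-\lambda x,u]+[x,0]+\lambda[x,u]=0$ for $x\in\frkh_+$ and $u\in\frkh_-$, so $\frkh_+$ and $\frkh_-$ in fact commute inside $\frkg_B$, giving a second, self-contained reason that the span $\frkh_++\frkh_-$ is closed under $[\cdot,\cdot]_B$.
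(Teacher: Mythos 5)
Your proof is correct and takes essentially the same route as the paper: the paper's one-line proof likewise observes that $\frkh_+$ and $\frkh_-$ are each ideals of $\frkg_B$ (a consequence of \eqref{RBLRS}, which you verify explicitly via Lemma~\ref{RBSY} after reducing $[z,x]_B$ to $[B(z),x]$ and $[z,u]_B$ to $-[\wtd{B}(z),u]$) and concludes that their sum is an ideal. Your added details, including the identification of the generated subalgebra with the linear span and the observation that $[\frkh_+,\frkh_-]_B=0$, simply make explicit what the paper leaves implicit.
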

\begin{proof}
By \eqref{RBLRS}, $\frkh_+$ and $\frkh_-$ are both ideals of $\frkg_B$ and $\frkg_{\wtd{B}}.$ Therefore $\frkh_+ + \frkh_-$ is an ideal of $\frkg_B$ and an ideal of $\frkg_{\wtd{B}}.$
\end{proof}

Next, we prove that every Rota-Baxter Lie algebra of weight $-1$ $(\frkg,B)$ induces a Rota-Baxter Lie algebra structure on $\frkg_B /(\frkh_++\frkh_-).$
\begin{pro}\label{OB}
Let $(\frkg,B)$ be a Rota-Baxter Lie algebra of weight $-1$. Define the operator $\overline{B}:\frkg_{B}/(\frkh_+ + \frkh_-) \to \frkg_{B}/(\frkh_+ + \frkh_-)$ by 
\begin{equation*}
\overline{B}(\overline{x})=\overline{B(x)},\ \forall x\in \frkg,
\end{equation*}
and the operator $\wtd{\ol{B}}:\frkg_{\wtd{B}}/(\frkh_+ + \frkh_-) \to \frkg_{\wtd{B}}/(\frkh_+ + \frkh_-)$ by 
\begin{equation*}
\wtd{\ol{B}}(\overline{x})=\ol{\wtd{B}(x)},\ \forall x\in \frkg.
\end{equation*}
Then $\overline{B}$ and $\wtd{\ol{B}}$ are both Rota-Baxter operators of weight $-1$ on $\frkg$.
\end{pro}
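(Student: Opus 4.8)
The plan is to reduce the statement to Lemma~\ref{B1} together with the fact, recalled at the start of this section, that the companion $-\lambda-B$ of a Rota-Baxter operator of weight $\lambda$ is again a Rota-Baxter operator of the same weight. The only genuine work is the well-definedness of $\overline{B}$ and $\overline{\wtd{B}}$, i.e.\ that $B$ and $\wtd{B}$ preserve the ideal $\frkh_++\frkh_-$; everything after that is a formal descent along the quotient map. First I would record that, since the weight is $-1$, we have $\wtd{B}=\id-B$, so $B+\wtd{B}=\id$ and hence $x=B(x)+\wtd{B}(x)$ for every $x\in\frkg$. For $w\in\frkh_+=\ker\wtd{B}$ this gives $w=B(w)$, so $B(w)=w\in\frkh_+$ while $\wtd{B}(w)=0$; for $w\in\frkh_-=\ker B$ it gives $w=\wtd{B}(w)$, so $\wtd{B}(w)=w\in\frkh_-$ while $B(w)=0$. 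In either case $B(w),\wtd{B}(w)\in\frkh_++\frkh_-$, so both $B$ and $\wtd{B}$ map $\frkh_++\frkh_-$ into itself. Since $\frkh_++\frkh_-$ is an ideal of $\frkg_B$ by the preceding lemma, the quotient $\frkg_{B}/(\frkh_++\frkh_-)$ is a Lie algebra, and the formulas for $\overline{B}(\overline{x})$ and $\overline{\wtd{B}}(\overline{x})$ are independent of the chosen representative.

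Next I would transport the Rota-Baxter identity along the canonical projection $\pi\colon\frkg_B\to\frkg_{B}/(\frkh_++\frkh_-)$. By the first step $\pi$ is a surjective Lie algebra homomorphism satisfying $\pi\circ B=\overline{B}\circ\pi$, and by Lemma~\ref{B1} the operator $B$ satisfies \eqref{RBLRS} on $\frkg_B$ with $\lambda=-1$. Given $\overline{x},\overline{y}$ in the quotient, I would lift them to $x,y\in\frkg_B$, apply $\pi$ to the weight $-1$ identity $[B(x),B(y)]_B=B\big([B(x),y]_B+[x,B(y)]_B-[x,y]_B\big)$, and use that $\pi$ is a homomorphism intertwining $B$ with $\overline{B}$; this yields precisely the weight $-1$ Rota-Baxter identity for $\overline{B}$ on $\frkg_{B}/(\frkh_++\frkh_-)$.

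Finally, from $\wtd{B}=\id-B$ I obtain $\overline{\wtd{B}}(\overline{x})=\overline{x-B(x)}=\overline{x}-\overline{B}(\overline{x})$, so $\overline{\wtd{B}}=\id-\overline{B}$ on the quotient; in particular $\overline{B}+\overline{\wtd{B}}=\id_{\frkg_{B}/(\frkh_++\frkh_-)}$. Since $\overline{B}$ is a Rota-Baxter operator of weight $-1$, its companion $-(-1)-\overline{B}=\id-\overline{B}$ is again a Rota-Baxter operator of weight $-1$ by the well-known fact recalled above, and this companion is exactly $\overline{\wtd{B}}$. This completes the argument.

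The hard part is the well-definedness in the first paragraph, and the point I want to flag is that it does \emph{not} follow merely from $\frkh_++\frkh_-$ being an ideal for the bracket $[\cdot,\cdot]_B$: one must invoke the decomposition $x=B(x)+\wtd{B}(x)$ to see that $B$ restricts to the identity on $\frkh_+$ and annihilates $\frkh_-$ (and dually for $\wtd{B}$), which is what forces $B$ and $\wtd{B}$ to stabilize $\frkh_++\frkh_-$. Once this is in hand, the Rota-Baxter property of $\overline{B}$ is a routine push-forward through the surjective homomorphism $\pi$, and the property of $\overline{\wtd{B}}$ and the identity $\overline{B}+\overline{\wtd{B}}=\id$ are immediate.
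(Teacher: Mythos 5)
Your proposal is correct and follows essentially the same route as the paper: well-definedness via the observation that $B$ restricts to the identity on $\frkh_+=\ker\wtd{B}$ and vanishes on $\frkh_-=\ker B$ (so $B(x+u)=x\in\frkh_++\frkh_-$), followed by descending Lemma \ref{B1} through the quotient projection, with $\overline{B}+\overline{\wtd{B}}=\id$ immediate from $\wtd{B}=\id-B$. The only cosmetic difference is that for $\overline{\wtd{B}}$ you invoke the companion-operator fact that $\id-\overline{B}$ is again Rota-Baxter of weight $-1$, where the paper simply repeats the argument for $\wtd{B}$; both are valid.
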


\begin{proof}
First, let us prove that $\overline{B}$ is well defined. For any $x\in \frkh_+,$ and $u\in \frkh_-,$ we have
$B(x+u)=B(x)=x\in \frkh_+ + \frkh_-.$ This shows that $\overline{B}$ is well defined. Then by Lemma \ref{B1}, we know that $\overline{B}$ is a Rota-Baxter operator of weight $-1$ on $\frkg_B/(\frkh_+ +\frkh_-).$ It is similar to prove that $\wtd{\ol{B}}$ is also a Rota-Baxter operator of weight $-1$ on $\frkg_B/(\frkh_+ +\frkh_-).$ Finally, one can readily verify that $\overline{B}+\wtd{\ol{B}}=\id_{\frkg_{B}/(\frkh_+ + \frkh_-)}$ from the definition of $\overline{B}$ and $\wtd{\ol{B}}.$
\end{proof}

In Corollary \ref{BBIS}, we have investigated the Rota-Baxter Lie algebra on $\im C$. Then we study the Rota-Baxter Lie algebra on $\im \wtd{C},$ and prove that it is Rota-Baxter isomorphic to the Rota-Baxter Lie algebra defined in Proposition \ref{OB}.

\begin{thm}\label{FN}
Let $(\frkg,B)$ be a Rota-Baxter Lie algebra and $(\frkg_+,\frkg_-,\rhd,\bhd)$ be the matched pair of Lie algebras on $(\frkg,B).$ Let $C$ and $\wtd{C}$ be the operators given in Theorem \ref{FL}, $\overline{B}$ be the operator given in Proposition \ref{OB} and $\frkg_2=\im \wtd{C}$. Define $B_2:\frkg_2\to \frkg_2$ by 
\begin{equation*}
B_2((x,u))=\wtd{C}((0,u)),\ \forall (x,u)\in \frkg_2,
\end{equation*}
and
$\wtd{B_2}:\frkg_2\to \frkg_2$ by 
\begin{equation*}
\wtd{B_2}((x,u))=\wtd{C}((x,0)),\ \forall (x,u)\in \frkg_2.
\end{equation*}
Then the following holds:
\begin{itemize}
\item[(a)]
$(\frkg_2,B_2)$ is a Rota-Baxter Lie algebra of weight $-1$ that is Rota-Baxter isomorphic to $(\frkg_{B}/(\frkh_+ +\frkh_-),\overline{B});$
\item[(b)] $(\frkg_2,\wtd{B_2})$ is a Rota-Baxter Lie algebra of weight $-1$ that is Rota-Baxter isomorphic to $(\frkg_{\wtd{B}}/(\frkh_+ +\frkh_-),\wtd{\ol{B}});$
\end{itemize}
\end{thm}
\begin{proof}
By Theorem \ref{FL} and Proposition \ref{MR}, we know that $B_2$ is a Rota-Baxter operator of weight $-1$ on $\frkg_2.$ Define the operator $\pi:(\frkg_{B}/(\frkh_+ +\frkh_-),\ol{B}) \to (\frkg_2,B_2)$ by 
\begin{equation*}
\pi(\overline{x})=(B\circ \wtd{B}(x),-B\circ \wtd{B}(x)), \ \forall x\in \frkg_{B}.
\end{equation*}
First, it follows directly from the definition that $\pi$ is well defined. Next, we check that $\pi$ is a Lie algebra homomorphism. 
For any $x,\ y\in \frkg,$ we have 
\begin{equation*}
\begin{aligned}
[\pi(\ol{x}),\pi(\ol{y})]_{\bowtie}&=[(B\circ \wtd{B}(x),-B\circ \wtd{B}(x)),(B\circ \wtd{B}(y),-B\circ \wtd{B}(y))]_{\bowtie}\\
&=\left(-B([\wtd{B}(x),\wtd{B}(y)]),-\wtd{B}([B(x),B(y)])\right)\ (\textit{by Lemma \ref{BBW}})\\
&=\left(B\circ \wtd{B}([x,y]_{B}),-B\circ \wtd{B}([x,y]_{B})\right)\ (\textit{by \eqref{RBLRS} and Lemma \ref{RBRL}})\\ 
&=\pi(\ol{[x,y]_{B}}).
\end{aligned}
\end{equation*}
This proves $\pi$ is a Lie algebra homomorphism. For any $x\in \frkg,$ if 
$\pi(\ol{x})=(0,0).$ It follows that $\wtd{B}(x)\in \frkh_-$ and $B(x)\in \frkh_+.$ Then we have $x=B(x)+\wtd{B}(x)\in \frkh_+ +\frkh_-.$ This shows that $\pi$ is injective. By the definition of $\pi$, it is not hard to see that $\pi$ is surjective. Finally, we prove that $\pi\circ \ol{B}=B_2\circ \pi.$ For any $x\in \frkg,$
we have 

\begin{equation*}
\begin{aligned}
\pi\circ \ol{B}(\ol{x})&=\pi(\ol{B(x)})=\left(B\circ\wtd{B}\circ B(x),-B\circ\wtd{B}\circ B(x)\right)\\&=C\left((0,-\wtd{B}\circ B(x) )\right)=B_2\circ \pi(\ol{x}).
\end{aligned}
\end{equation*}
Therefore, $(\frkg_2,B_2)$ is Rota-Baxter isomorphic to $(\frkg_{B}/(\frkh_+ +\frkh_-),\overline{B}).$ This proves (a) and it is to prove (b).
\end{proof}

By the above theorem, one can readily obtain the following corollary. 
\begin{cor}
With the notations given in \ref{FN}, define $\pi_1:(\frkg_{B},B) \to (\frkg_2,B_2)$ and $\pi_2:(\frkg_{\wtd{B}},\wtd{B}) \to (\frkg_2,\wtd{B_2})$
by 
\begin{equation*}
\begin{aligned}
\pi_1(x)&=(B\circ \wtd{B}(x),-B\circ \wtd{B}(x)),\\
\pi_2(x)&=(B\circ \wtd{B}(x),-B\circ \wtd{B}(x)),
\end{aligned}
\end{equation*}
for any $x\in \frkg.$ Then $\pi_1$ and $\pi_2$ are surjective Rota-Baxter Lie algebra homomorphisms.
\end{cor}

\section{Manin triples and quadratic Rota-Baxter Lie algebras}\label{3}

In this section, we consider the relationship between Manin triples and quadratic Rota-Baxter Lie algebras, the latter introduced in \cite{Hl}. We show that every quadratic Rota-Baxter Lie algebra of weight $-1$ gives rise to the structure of a Manin triple. Moreover, we prove that the induced Manin triple admits a decomposition.

First, let us recall the definition of quadratic Lie algebras.
Let $\frkg$ be a Lie algebra. Recall that a nondegenerate symmetric bilinear form $S:\frkg\times\frkg \to \mathbb{K}$ is called invariant if it satisfies 
\begin{equation*}
S([x,y],z)=S(x,[y,z]),\ \forall x,y,z\in \frkg.
\end{equation*}
 Moreover, the double $(\frkg,S)$ is called a quadratic Lie algebra.

Next, let us recall the definition of Manin tripes.

A Manin triple is a triple $((\frkg, S),\frkg_{+},\frkg_{-})$ where $(\frkg, S)$ is a quadratic Lie algebra, $\frkg_{+}$ and $\frkg_-$ are Lie algebras such that 

\begin{itemize}
\item[(a)] $\frkg_{+}$ and $\frkg_{-}$
are Lie subalgebras of $\frkg;$
\item[(b)] $\frkg=\frkg_{+}\oplus \frkg_{-}$ as vector spaces;
\item[(c)] $\frkg_{+}$ and $\frkg_{-}$ are isotropic with respect to the bilinear form $S$.
\end{itemize}

Let $((\frkg,S),\frkg_{+},\frkg_{-})$ be a Manin triple. As $\frkg$ is a Lie algebra, then by Remark \ref{MM}, we know that the Lie bracket of $\frkg$ induces a matched pair of Lie algebra structure $(\frkg_+,\frkg_-,\rhd,\bhd).$ From this to the end of this paper, we call it \textbf{the matched pair of Lie algebras on the Manin triple $((\frkg,S),\frkg_{+},\frkg_{-}).$}

Then let us recall the notion of quadratic Rota-Baxter Lie algebras.

 \begin{defi}
A triple $(\mathfrak{g}, B, S)$ is called a \textbf{quadratic Rota-Baxter Lie algebra} of weight $\lambda$ if $(\frkg, B)$ is a Rota-Baxter Lie algebra of weight $\lambda,$ $(\frkg, S)$ is a quadratic Lie algebra, and the following condition holds:
\begin{equation}\label{RSP}
S(B(x),y)+S(x,B(y))+\lambda S(x,y)=0,\ \forall x,y\in \frkg.
\end{equation}
 \end{defi}

Note that \eqref{RSP} can be also written as  
\begin{equation}\label{RP}
S(B(x),y)=S(x,\wtd{B}(y)),\ \forall x,y\in \frkg.
\end{equation}

The next lemma will be used in the proof of Proposition \ref{MRS}.

\begin{lem}\label{IV1}
Let $(\frkg,B,S)$ be a quadratic Rota-Baxter Lie algebra of weight $\lambda.$ Then the operator $S':(\frkg_+\oplus \frkg_-)\times (\frkg_+\oplus \frkg_-)\to \frkg_+\oplus \frkg_-$ given by 
\begin{equation}\label{DS'}
S'\left((B(x_1),\wtd{B}(x_2)),(B(y_1),\wtd{B}(y_2))\right)=S(B(x_1),y_2)+S(B(y_1)  ,x_2),\ \forall x_1,x_2,y_1,y_2\in \frkg
\end{equation}
is a nondegenerate symmetric invariant bilinear form.
\end{lem}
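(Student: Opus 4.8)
The plan is to verify, in turn, that $S'$ is well defined, bilinear and symmetric, then that it is nondegenerate, and finally that it is invariant with respect to the bicrossed product bracket $[\cdot,\cdot]_{\bowtie}$; the last point carries all the work.

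First I would check that the right-hand side of \eqref{DS'} depends only on the elements $(B(x_1),\wtd{B}(x_2))$ and $(B(y_1),\wtd{B}(y_2))$, not on the chosen representatives $x_1,x_2,y_1,y_2$. An element of $\frkg_+$ fixes its $B$-preimage only modulo $\ker B=\frkh_-$, and an element of $\frkg_-$ fixes its $\wtd{B}$-preimage only modulo $\ker\wtd{B}=\frkh_+$. Since only $B(x_1)$ and $B(y_1)$ enter the formula, altering $x_1,y_1$ inside $\ker B$ changes nothing. If instead $x_2$ is replaced by $x_2+h$ with $h\in\frkh_+=\ker\wtd{B}$, the extra contribution is $S(B(y_1),h)=S(y_1,\wtd{B}(h))=0$ by \eqref{RP}, and symmetrically for $y_2$. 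Hence $S'$ is well defined; it is clearly bilinear, and it is symmetric because interchanging its two arguments merely swaps the summands $S(B(x_1),y_2)$ and $S(B(y_1),x_2)$.

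For nondegeneracy, suppose $S'\big((B(x_1),\wtd{B}(x_2)),(B(y_1),\wtd{B}(y_2))\big)=0$ for all $y_1,y_2\in\frkg$. Taking $y_1=0$ yields $S(B(x_1),y_2)=0$ for every $y_2$, so $B(x_1)=0$ since $S$ is nondegenerate on $\frkg$; taking $y_2=0$ yields $S(B(y_1),x_2)=0$ for every $y_1$, which by \eqref{RP} reads $S(y_1,\wtd{B}(x_2))=0$ for every $y_1$, whence $\wtd{B}(x_2)=0$. Thus $(B(x_1),\wtd{B}(x_2))=(0,0)$, and $S'$ is nondegenerate.

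The hard part is invariance, namely $S'([a,b]_{\bowtie},c)=S'(a,[b,c]_{\bowtie})$ for $a=(B(x_1),\wtd{B}(x_2))$, $b=(B(y_1),\wtd{B}(y_2))$, $c=(B(z_1),\wtd{B}(z_2))$. I would first use \eqref{ID1}, together with \eqref{RBLRS} and Lemma \ref{RBRL}, to bring $[a,b]_{\bowtie}$ into the normal form $(B(q_1),\wtd{B}(q_2))$, so that $S'([a,b]_{\bowtie},c)=S(B(q_1),z_2)+S(B(z_1),q_2)$, and likewise for $[b,c]_{\bowtie}$. After substitution each summand has the shape $S(B([\,\cdot\,,\,\cdot\,]),\cdot)$; I would then move every $B$ across the form via \eqref{RP}, redistribute brackets using the invariance $S([p,q],r)=S(p,[q,r])$ of $S$ on $\frkg$, and simplify with the Jacobi identity. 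The genuine obstacle is bookkeeping: both sides expand into many terms, and the equality emerges only after the correct cancellations, which are governed by invariance of $S$, equation \eqref{RP}, and the relation $[\cdot,\cdot]_{\wtd{B}}=-[\cdot,\cdot]_{B}$ of Lemma \ref{RBRL}. Once these are matched, $S'$ is a nondegenerate symmetric invariant bilinear form, as asserted.
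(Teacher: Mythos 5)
Your treatment of well-definedness, symmetry and nondegeneracy is complete and agrees with the paper's proof (your nondegeneracy argument, specializing to $y_1=0$ and $y_2=0$ separately, is if anything cleaner than the paper's, and your check of the $x_1,y_1$ ambiguity, which the paper omits as trivial, is welcome). The gap is in the invariance step, which is the entire substance of the lemma: you correctly identify the normal form $[a,b]_{\bowtie}=(B(q_1),\wtd{B}(q_2))$ via \eqref{ID1}, \eqref{RBLRS} and Lemma \ref{RBRL}, but you then assert that ``the equality emerges only after the correct cancellations'' without exhibiting a single one of them. The cancellation pattern is precisely where the proof lives; deferring it to bookkeeping means the invariance claim is stated, not proved, and a reader cannot verify from your text that the two multi-term expansions actually match.

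For the record, the plan you outline does close, and it is essentially the paper's own route: pairing each bracket summand with $c=(B(z_1),\wtd{B}(z_2))$ through \eqref{DS'} produces six terms of the form $S([\cdot,\cdot],\cdot)$; invariance of $S$ moves one factor across each bracket, giving among others $S\left(x_1,[\wtd{B}(y_2),\wtd{B}(z_2)]\right)$ and $S\left(x_2,[B(y_1),B(z_1)]\right)$; then \eqref{RBLRS} together with \eqref{RP} converts these to $S\left(B(x_1),[y_2,z_2]_{\wtd{B}}\right)$ and $S\left(B(x_2),[y_1,z_1]_{B}\right)$, after which \eqref{DS'} reassembles the whole expression as $S'\left(a,[b,c]_{\bowtie}\right)$. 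Two points of comparison with your sketch: the Jacobi identity you propose to invoke is never needed --- invariance of $S$ plus the Rota-Baxter identity suffice, and no re-association of brackets occurs; and rather than first collapsing $[a,b]_{\bowtie}$ to the normal form $(B(q_1),\wtd{B}(q_2))$, the paper keeps the six summands separate when feeding them into \eqref{DS'}, which makes the cancellations transparent. Carrying out this computation explicitly is what your proposal must add to become a proof.
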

\begin{proof}
First, let us prove that $S'$ is well defined. For any $x_1,y_1\in \frkg$ and $x_2,y_2\in \ker \wtd{B},$ by \eqref{RP} we have
\begin{equation*}
S(B(x_1),y_2)+S(B(y_1),x_2)=S(x_1,\wtd{B}(y_2))+S(y_1,\wtd{B}(x_2))=0.
\end{equation*}
This proves $S'$ is well defined. From the definition of $S'$, it follows that $S'$ is symmetric. Next, we prove that $S'$ is nondegenerate. Let $x_1,x_2\in\frkg.$ If for any $y_1,y_2\in \frkg,$ 
\begin{equation*}
S'\left(B(x_1),\wtd{B}(x_2)),(B(y_1),\wtd{B}(y_2))\right)=S(B(x_1),y_2)+S(B(y_1),x_2)=0,
\end{equation*}
then we have $S(B(x_1),y_2)=0$. Again by \eqref{RP}, we have $S(y_1,\wtd{B}(x_2)))=S(B(y_1),x_2)=0$. As $S$ is nondegenerate, we have $(B(x_1),\wtd{B}(x_2))=(0,0).$ It follows that $S'$ is nondegenerate. Finally, we prove that $S'$ is invariant. For any $x_1,x_2,y_1,y_2,z_1,z_2\in \frkg,$ we have
\begin{equation*}
\begin{aligned}
&S'\left([(B(x_1),\wtd{B}(x_2)),(B(y_1),\wtd{B}(y_2))]_{\bowtie}, (B(z_1),\wtd{B}(z_2))\right)\\
=&S'\left([(B(x_1),B(y_1)]+\wtd{B}(x_2)\bhd B(y_1)-\wtd{B}(y_2)\bhd B(x_1), \wtd{B}(z_2)\right)\\
&+S'\left( [\wtd{B}(x_2),\wtd{B}(y_2)]+B(x_1)\rhd \wtd{B}(y_2)-B(y_1)\rhd \wtd{B}(x_2),B(z_1)\right)\\
=&S'\left([(B(x_1),B(y_1)]+B([\wtd{B}(x_2),y_1])-B([\wtd{B}(y_2),x_1]), \wtd{B}(z_2)\right)\\
&+S'\left( [\wtd{B}(x_2),\wtd{B}(y_2)]+\wtd{B}([B(x_1),y_2])-\wtd{B}([B(y_1),x_2]),B(z_1)\right)\\
=&S\left([(B(x_1),B(y_1)], z_2\right)+S\left([\wtd{B}(x_2),y_1], \wtd{B}(z_2)\right)+S\left([x_1,\wtd{B}(y_2)], \wtd{B}(z_2)\right)\\
&+S\left( [\wtd{B}(x_2),\wtd{B}(y_2)],z_1\right)+S\left( [B(x_1),y_2],B(z_1)\right)+S\left( [x_2,B(y_1)],B(z_1)\right) \\
\end{aligned}
\end{equation*}
Then as $S$ is invariant, we have 
\begin{equation*}
\begin{aligned}
&S'\left([(B(x_1),\wtd{B}(x_2)),(B(y_1),\wtd{B}(y_2))]_{\bowtie}, (B(z_1),\wtd{B}(z_2))\right)\\
=&S\left(B(x_1),[B(y_1), z_2]\right)-S\left(B(x_1),[B(z_1),y_2]\right)+S\left(x_1,[\wtd{B}(y_2),\wtd{B}(z_2)]\right)\\
&-S\left(\wtd{B}(x_2), [\wtd{B}(z_2),y_1]\right)+S\left( \wtd{B}(x_2),[\wtd{B}(y_2),z_1]\right)+S\left( x_2,[B(y_1),B(z_1)] \right)\\
=&S\left(B(x_1),[B(y_1), z_2]\right)-S\left(B(x_1),[B(z_1),y_2]\right)+S\left(B(x_1),[y_2,z_2]_{\wtd{B}}\right)\\
&-S\left(\wtd{B}(x_2), [\wtd{B}(z_2),y_1]\right)+S\left( \wtd{B}(x_2),[\wtd{B}(y_2),z_1]\right)+S\left(B(x_2),[y_1,z_1]_{B} \right)\ (\textit{by \eqref{RBLRS} and \eqref{RP}})\\
=&S'\left((B(x_1),0),(0,[\wtd{B}(y_2),\wtd{B}(z_2)]+B(y_1)\rhd \wtd{B}(z_2)-B(z_1)\rhd \wtd{B}(y_2)   )  \right)\\
&+S'\left((0,\wtd{B}(x_2)),([B(y_1),B(z_1)]+\wtd{B}(y_2)\bhd B(z_1)-\wtd{B}(z_2)\bhd B(y_1),0)  \right)\ (\textit{by \eqref{RP} and \eqref{DS'}})\\
=&S'\left((B(x_1),\wtd{B}(x_2)),[(B(y_1),\wtd{B}(y_2)),(B(z_1),\wtd{B}(z_2))]_{\bowtie}  \right).
\end{aligned}
\end{equation*}
This proves $S'$ is invariant.

\end{proof}

In the next theorem, we show that every quadratic Rota-Baxter Lie algebra of weight $-1$ $(\frkg,B,S)$ induces a Manin triple structure on $\frkg_+\oplus \frkg_-.$

\begin{pro}\label{MRS}
With the notations given above. Let $(\frkg,B,S)$ be a quadratic Rota-Baxter Lie algebra of weight $-1$ and $(\frkg_+,\frkg_-,\rhd,\bhd)$ be the matched pair of Lie algebras on $(\frkg,B)$. Then $((\frkg_+\bowtie \frkg_-,S'),\frkg_+,\frkg_-)$ is a Manin triple.
\end{pro}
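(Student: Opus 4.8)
The plan is to verify the three defining conditions of a Manin triple for $((\frkg_+\bowtie\frkg_-,S'),\frkg_+,\frkg_-)$. The main ingredient, namely that $S'$ is a nondegenerate symmetric invariant bilinear form — equivalently that $(\frkg_+\bowtie\frkg_-,S')$ is a quadratic Lie algebra — is already supplied by Lemma \ref{IV1}. It then remains to check (a) that $\frkg_+$ and $\frkg_-$ sit inside $\frkg_+\bowtie\frkg_-$ as Lie subalgebras, (b) that $\frkg_+\bowtie\frkg_-=\frkg_+\oplus\frkg_-$ as vector spaces, and (c) that $\frkg_+$ and $\frkg_-$ are isotropic with respect to $S'$.

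Condition (b) is immediate, since by construction the underlying vector space of the bicrossed product is $\frkg_+\oplus\frkg_-$. For (a), I would identify $\frkg_+$ with $\{(x,0)\mid x\in\frkg_+\}$ and $\frkg_-$ with $\{(0,u)\mid u\in\frkg_-\}$, and read off from the bracket $[\cdot,\cdot]_{\bowtie}$ that $[(x,0),(y,0)]_{\bowtie}=([x,y]_{\frkg_+},0)$ and $[(0,u),(0,v)]_{\bowtie}=(0,[u,v]_{\frkg_-})$, because every cross-action term in $[\cdot,\cdot]_{\bowtie}$ carries a factor lying in a zero slot and hence vanishes. Thus both copies are closed under the bracket and form Lie subalgebras isomorphic to $\frkg_+$ and $\frkg_-$ respectively.

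The only genuine computation is the isotropy in (c), and it is where the quadratic Rota-Baxter identity \eqref{RP} enters. A generic element of $\frkg_+$ inside the bicrossed product is $(B(x_1),0)$; to feed it to the defining formula \eqref{DS'} for $S'$ I would present the vanishing second slot as $0=\wtd{B}(x_2)$ with $x_2\in\ker\wtd{B}$, and similarly for a second element $(B(y_1),0)$ with $y_2\in\ker\wtd{B}$. Then \eqref{DS'} gives $S'((B(x_1),0),(B(y_1),0))=S(B(x_1),y_2)+S(B(y_1),x_2)$, and \eqref{RP} rewrites this as $S(x_1,\wtd{B}(y_2))+S(y_1,\wtd{B}(x_2))=0$ since $\wtd{B}(x_2)=\wtd{B}(y_2)=0$. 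Symmetrically, a generic element of $\frkg_-$ is $(0,\wtd{B}(x_2))$, which I would present as $(B(x_1),\wtd{B}(x_2))$ with $x_1\in\ker B$; then in \eqref{DS'} both summands carry a factor $B(x_1)=0$ or $B(y_1)=0$ and vanish outright. Hence both subalgebras are isotropic, completing the verification.

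I do not anticipate a serious obstacle, as conditions (a)--(c) reduce to short calculations once Lemma \ref{IV1} is in hand. The one point deserving care is purely bookkeeping: the arguments of $S'$ must be written in the normal form $(B(x_1),\wtd{B}(x_2))$ required by \eqref{DS'}, and the freedom to choose the representatives $x_2\in\ker\wtd{B}$ (for $\frkg_+$) or $x_1\in\ker B$ (for $\frkg_-$) is exactly what collapses the two isotropy computations. Its legitimacy is guaranteed by the well-definedness of $S'$ already established in Lemma \ref{IV1}.
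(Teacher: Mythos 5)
Your proposal is correct and follows essentially the same route as the paper: the paper's proof likewise invokes Lemma \ref{IV1} for the quadratic structure on $\frkg_+\bowtie\frkg_-$ and then observes that conditions (a)--(c) hold, with isotropy read off from the definition of $S'$. Your only difference is that you spell out the isotropy bookkeeping (normal-form representatives with $x_2\in\ker\wtd{B}$, resp.\ $x_1\in\ker B$, justified by the well-definedness of $S'$) that the paper leaves implicit, which is a faithful elaboration rather than a different argument.
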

\begin{proof}
$\frkg_+$ and $\frkg_-$ are both Lie subalgebras of $\frkg\oplus \frkg_-$ with respect to $[\cdot,\cdot]_{\bowtie}$. It follows from Lemma \ref{IV1} that $S'$ is a nondegenerate symmetric invariant bilinear form. Therefore $(\frkg_+\bowtie \frkg_-,S')$ is a quadratic Lie algebra. Finally, it follows from the definition of $S'$ that $\frkg_+$ and $\frkg_-$ are isotropic with respect to $S'.$ This proves that $((\frkg_+\bowtie \frkg_-,S'),\frkg_+,\frkg_-)$ is a Manin triple.
\end{proof} 

This matched pair given in the above Theorem is called \textbf{the matched pair of Lie algebras on the quadratic Rota-Baxter Lie algebra $(\frkg,B,S).$}

\begin{rmk}
By \cite{K}, a factorizable Lie bialgebra is a quasitriangular Lie bialgebra $(\mathfrak g,r)$ whose $r$-matrix has a nondegenerate symmetric part.  In other words, a factorizable Lie bialgebra is the one for which $r$ solves the classical Yang--Baxter equation and its symmetric part $s$ defines a nondegenerate, invariant bilinear form on $\mathfrak g$. This nondegeneracy makes the maps
\[
r_\pm:\mathfrak g^*\to\mathfrak g,\qquad
r_\pm(\xi)=\langle \xi,\cdot\rangle_{\text{first/second component of } r}
\]
isomorphisms of Lie algebras onto their images. For more details on factorizable Lie bialgebras, we refer to \cite{Hl,K}.

In \cite{Hl}, it was shown that there is a
one-to-one correspondence between
factorizable Lie bialgebras and quadratic Rota-Baxter Lie algebras of nonzero weight. Therefore, Proposition \ref{MRS} implies that for any factorizable Lie bialgebra $(\frkg,r),$ there is a matched pair of Lie algebras on $\im r_+\oplus \im r_-.$ 
\end{rmk}

Then we introduce the definition of quadratic Lie algebra projections on Manin triples.

\begin{defi}\label{FM}
Let $((\frkg, S),\frkg_+,\frkg_-)$ be a Manin triple and $(\frkg_+,\frkg_-,\rhd,\bhd)$ be the matched pair of Lie algebras on $((\frkg, S),\frkg_+,\frkg_-)$. Let $C:\frkg\to \frkg$ be a Lie algebra projection on $(\frkg_+,\frkg_-,\rhd,\bhd)$. Let $\wtd{C}=\id_{\frkg}-C.$
We call $C$ a \textbf{quadratic Lie algebra projection on the Manin triple} $((\frkg,S),\frkg_+,\frkg_-)$ if it satisfies
 $$S(C((x_1,x_2)),\wtd{C}((y_1,y_2)))=0, \forall (x_1,x_2),\ (y_1,y_2)\in \frkg.$$ 
\end{defi}

Next, as the quadratic Lie algebra analogue of Rota-Baxter Lie algebra homomorphisms, we propose the notion of quadratic Rota-Baxter Lie algebra homomorphisms.

\begin{defi}
Let $(\frkg,B,S)$ and $(\frkg',B',S')$ be quadratic Rota-Baxter Lie algebras. A map $f:(\frkg,B,S)\to(\frkg',B',S')$ is called a \textbf{quadratic Rota-Baxter Lie algebra homomorphism} if $f$ is a Rota-Baxter Lie algebra homomorphism and it satisfies 
$$S'(f(x),f(y))=S(x,y),\ \forall x,\ y\in \frkg.$$
Moreover, if $f$ is bijective, then $f$ is called a \textbf{quadratic Rota-Baxter Lie algebra isomorphism} and $(\frkg,B,S)$ is called \textbf{quadratic Rota-Baxter isomorphic} to $(\frkg',B',S')$.
\end{defi}

To study the decomposition of the Manin triple on a quadratic Rota-Baxter Lie algebra of weight $-1,$ we introduce the notion of the direct sum of quadratic Lie algebras. 
\begin{defi}
Let $(\frkg_1,S_1)$ and $(\frkg_2,S_2)$ be quadratic Lie algebras. The pair $(\frkg,S)$ is called the direct sum of quadratic Lie algebras $(\frkg_1,S_1)$ and $(\frkg_2,S_2)$ if $\frkg=\frkg_1\oplus \frkg_2$ and $S:\frkg\times \frkg\to \frkg$ is given by 
$$S((x,u),(x',u'))=S_1(x,x')+S_2(u,u'), \ \forall x,x'\in \frkg_1, \ \forall u,u'\in \frkg_2.$$
\end{defi}

It is straightforward to obtain the following proposition from the definition of quadratic Lie projections on Manin triples.

\begin{pro}
Let $((\frkg,S),\frkg_+,\frkg_-)$ be a Manin triple. Then an operator $C:\frkg\to \frkg$ is a quadratic Lie algebra projection on $((\frkg,S),\frkg_+,\frkg_-)$ if and only if there are quadratic Lie  subalgebras $(\frkg_1,S_1) \subseteq (\frkg,B)$ and $(\frkg_2,S_2)\subseteq (\frkg,B)$ such that $(\frkg,S)=(\frkg_1,S_1)\oplus (\frkg_2,S_2)$ and $C$ is the projection from $\frkg$ to $\frkg_1.$

\end{pro}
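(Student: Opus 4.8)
The plan is to bootstrap off the purely Lie-theoretic characterization of Lie algebra projections proved in Section~\ref{2} (the proposition immediately preceding Proposition~\ref{MR}), and to feed in the extra orthogonality hypothesis of Definition~\ref{FM} to promote the vector-space decomposition it produces to a genuine direct sum of quadratic Lie algebras. Throughout I write $\frkg=\frkg_+\bowtie\frkg_-$ and, for a quadratic Lie algebra projection $C$, set $\frkg_1=\im C$ and $\frkg_2=\im\wtd{C}=\ker C$, together with the restricted forms $S_1=S|_{\frkg_1\times\frkg_1}$ and $S_2=S|_{\frkg_2\times\frkg_2}$ (these are the objects denoted $(\frkg_1,B_1)$ and $(\frkg_2,B_2)$ in the statement).

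For the forward direction, since $C$ is in particular a Lie algebra projection on $(\frkg_+,\frkg_-,\rhd,\bhd)$ in the sense of Definition~\ref{FAC}, the earlier characterization already yields $\frkg=\frkg_1\oplus\frkg_2$ as vector spaces with $\frkg_1=\im C$ a subalgebra, $\frkg_2=\ker C$ an ideal, and $C$ the projection onto $\frkg_1$. The quadratic condition $S(C(a),\wtd{C}(b))=0$ says exactly that $\frkg_1\perp\frkg_2$, from which three things follow routinely: the cross terms in $S(a,b)$ vanish, so $S$ splits as $S((x,u),(x',u'))=S_1(x,x')+S_2(u,u')$; each $S_i$ is symmetric and, because $\frkg_i$ is a subalgebra, invariant by restriction; and each $S_i$ is nondegenerate, since any $x\in\frkg_1$ with $S_1(x,\frkg_1)=0$ also has $S(x,\frkg_2)=0$ by orthogonality, hence $S(x,\frkg)=0$ and $x=0$ (and symmetrically for $\frkg_2$). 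The one step that genuinely uses more than bookkeeping is showing that $\frkg_1$ is not merely a subalgebra but an ideal, so that the decomposition is a true Lie-algebra biproduct. Here I would combine orthogonality with invariance: for $x\in\frkg_1$, $w\in\frkg$ and $u\in\frkg_2$ one has $S([x,w],u)=S(x,[w,u])$, and $[w,u]\in\frkg_2$ because $\frkg_2$ is an ideal, so $S(x,[w,u])=0$; thus $[x,w]\perp\frkg_2$, and nondegeneracy of $S_2$ forces the $\frkg_2$-component of $[x,w]$ to vanish, i.e. $[x,w]\in\frkg_1$. Consequently $\frkg_1$ is an ideal, $[\frkg_1,\frkg_2]\subseteq\frkg_1\cap\frkg_2=0$, and $(\frkg,S)=(\frkg_1,S_1)\oplus(\frkg_2,S_2)$ as a direct sum of quadratic Lie algebras, with $C$ the projection onto $\frkg_1$.

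For the converse I would start from a decomposition $(\frkg,S)=(\frkg_1,S_1)\oplus(\frkg_2,S_2)$ with $C$ the projection onto $\frkg_1$. By definition of the direct sum $\frkg_1$ and $\frkg_2$ are commuting ideals, so $C$ is idempotent and satisfies $C([a,b])=[C(a),C(b)]$ (the cross brackets land in $[\frkg_1,\frkg_2]=0$); hence $C$ is a Lie algebra projection on the matched pair in the sense of Definition~\ref{FAC}. Finally $C(a)\in\frkg_1$ and $\wtd{C}(b)\in\frkg_2$, and the direct-sum form of $S$ gives $S(C(a),\wtd{C}(b))=0$, so $C$ is a quadratic Lie algebra projection on the Manin triple. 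The main obstacle is the single non-formal step in the forward direction: upgrading $\im C$ from a subalgebra to an ideal. In the purely Lie-theoretic setting $\im C$ need not be an ideal, and it is precisely the orthogonality built into Definition~\ref{FM}, used in tandem with the invariance of $S$, that supplies this; everything else reduces to the earlier proposition and to elementary properties of orthogonal splittings of a nondegenerate form.
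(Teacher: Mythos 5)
Your proof is correct, and there is nothing in the paper to compare it against: the paper asserts this proposition with no argument at all, calling it ``straightforward to obtain from the definition,'' so your write-up in effect supplies the proof the paper omits. The substantive content is precisely the step you single out: upgrading $\frkg_1=\im C$ from a subalgebra to an ideal via the invariance identity $S([x,w],u)=S(x,[w,u])$, the orthogonality $S(\frkg_1,\frkg_2)=0$, and nondegeneracy of $S|_{\frkg_2\times\frkg_2}$. This step is genuinely needed and not formal: for a plain Lie algebra projection the image need not be an ideal (e.g.\ in the two-dimensional Lie algebra with $[x,y]=y$, projecting onto the span of $x$ along the ideal spanned by $y$ is an idempotent homomorphism whose image is not an ideal), so it is exactly the quadratic hypothesis of Definition~\ref{FM} that forces $[\frkg_1,\frkg_2]\subseteq\frkg_1\cap\frkg_2=0$ and hence a direct sum of quadratic Lie algebras in the sense of the paper's definition, rather than a mere orthogonal vector-space splitting into subalgebras. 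Two small remarks: your converse rightly verifies the homomorphism property of $C$ directly from $[\frkg_1,\frkg_2]=0$ instead of invoking the earlier unproved Lie-algebra characterization, whose ``if'' direction as stated is actually too weak (the complement must be an ideal, not just a subalgebra, as the projection of $\frksl_2$ onto the span of $e$ along the Borel subalgebra shows); and when you attribute ``$\ker C$ is an ideal'' to that earlier proposition, note it is really the standard fact that kernels of Lie algebra homomorphisms are ideals, since the proposition itself only mentions subalgebras. You also correctly read the statement's $(\frkg_i,B_i)\subseteq(\frkg,B)$ as a typo for $(\frkg_i,S_i)\subseteq(\frkg,S)$.
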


Now, we give the Manin triple version of Proposition \ref{MR}.
\begin{pro}
Let $((\frkg,S),\frkg_+,\frkg_-)$ be a Manin triple and $C$ be a Lie algebra projection on $((\frkg,S),\frkg_+,\frkg_-)$. Let $\wtd{C}=\id_{\frkg}-C$ and $\frkh=\ker C.$ Then the following statements hold:
\begin{itemize}
\item[(a)] $(\frkh,B,S_{\frkh})$ is a quadratic Rota-Baxter Lie algebra of weight $-1,$ where $B:\frkh \to \frkh$ is defined by 
\begin{equation*}
B((x,u))=\wtd{C}((x,0)),\ \forall (x,u)\in \frkh,
\end{equation*}
and $S_{\frkh}$ is the restriction of $S$ to $\frkh\times \frkh;$
\item[(b)] $(\frkh,\wtd{B},S_{\frkh})$ is a quadratic Rota-Baxter Lie algebra of weight $-1,$ where $\wtd{B}:\frkh \to \frkh$ is defined by 
\begin{equation*}
B((x,u))=\wtd{C}((0,u)),\ \forall (x,u)\in \frkh,
\end{equation*}
and $S_{\frkh}$ is the restriction of $S$ to $\frkh\times\frkh.$
\end{itemize}
\end{pro}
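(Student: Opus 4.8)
The plan is to verify, for part (a), the three defining properties of a quadratic Rota-Baxter Lie algebra of weight $-1$ for the triple $(\frkh,B,S_{\frkh})$, reading $C$ as a \emph{quadratic} Lie algebra projection in the sense of Definition \ref{FM} (so that $S(C(\alpha),\wtd{C}(\beta))=0$ for all $\alpha,\beta\in\frkg$; without this condition neither the nondegeneracy below nor the compatibility could hold). First, the Rota-Baxter part is immediate: applying Proposition \ref{MR} to the underlying Lie algebra projection $C$ on the matched pair $(\frkg_+,\frkg_-,\rhd,\bhd)$ induced by the Manin triple shows that $B$ and $\wtd{B}$ are Rota-Baxter operators of weight $-1$ on $\frkh=\ker C$ with $B+\wtd{B}=\id_{\frkh}$. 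Moreover $\frkh$ is an ideal of $\frkg$, being the kernel of the Lie algebra homomorphism $C$, hence in particular a Lie subalgebra.

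Second, I would establish that $(\frkh,S_{\frkh})$ is a quadratic Lie algebra. Symmetry and invariance of $S_{\frkh}$ are inherited from $S$, the latter because invariance of a form restricts to any subalgebra. For nondegeneracy, idempotency of $C$ yields the vector-space decomposition $\frkg=\im C\oplus\frkh$, and the defining condition of a quadratic Lie algebra projection makes this decomposition orthogonal with respect to $S$. Since $S$ is nondegenerate on $\frkg$ and its summands are orthogonal, it restricts to a nondegenerate form on each; in particular $S_{\frkh}$ is nondegenerate.

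Third, and this is the heart of the matter, I would verify the compatibility \eqref{RSP}, which for weight $-1$ is equivalent to \eqref{RP}, namely $S_{\frkh}(B(a),b)=S_{\frkh}(a,\wtd{B}(b))$ for all $a=(x,u),\,b=(y,v)\in\frkh$. The key auxiliary observation is that for any $\gamma\in\frkg$ and any $b\in\frkh=\im\wtd{C}$ one has $S(\wtd{C}(\gamma),b)=S(\gamma,b)$, since $S(C(\gamma),b)=S(C(\gamma),\wtd{C}(b))=0$ by the quadratic projection condition. Using this together with the isotropy of $\frkg_+$ and $\frkg_-$ in the Manin triple, I would compute
\[ S(B(a),b)=S(\wtd{C}((x,0)),b)=S((x,0),(y,v))=S((x,0),(0,v)), \]
where the last equality uses that $\frkg_+$ is isotropic, and symmetrically
\[ S(a,\wtd{B}(b))=S(a,\wtd{C}((0,v)))=S((x,u),(0,v))=S((x,0),(0,v)), \]
using that $\frkg_-$ is isotropic. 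Equating the two right-hand sides establishes \eqref{RSP}.

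The main obstacle is precisely this final step: the compatibility condition is not a formality but requires combining the orthogonality supplied by the \emph{quadratic} nature of the projection with the Lagrangian (isotropy) structure of the two subalgebras of the Manin triple. Once part (a) is settled, part (b) follows by the identical argument with the roles of $B$ and $\wtd{B}$ (equivalently, of $\frkg_+$ and $\frkg_-$) interchanged, so I would only remark on this symmetry rather than repeat the computation.
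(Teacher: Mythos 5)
Your proposal is correct and takes essentially the same route as the paper: Proposition \ref{MR} for the Rota-Baxter operators, symmetry/invariance by restriction, nondegeneracy from the $S$-orthogonality of $\im C$ and $\ker C$ supplied by the quadratic projection condition, and verification of \eqref{RP} by reducing both sides to $S((x,0),(0,v))$ via the isotropy of $\frkg_+$ and $\frkg_-$. Your explicit remark that $C$ must be read as a \emph{quadratic} Lie algebra projection in the sense of Definition \ref{FM} correctly pins down a hypothesis the paper's statement leaves implicit but its proof silently uses.
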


\begin{proof}
By Proposition \ref{MR}, we know that $B$ and $\wtd{B}$ are Rota-Baxter operators of $\frkh$ such that $B+\wtd{B}=\id_{h}.$ As $S$ is symmetric and invariant, we know that $S_{\frkh}$ is also symmetric and invariant. Next, we show that $S_{\frkh}$ is nondegenerate. Let $(x,u)\in \frkg.$ If for any $(y,u)\in \frkg,$ $S_{\frkh}(\wtd{C}((x,u)),\wtd{C}((y,v)))=0,$ from the definition of $S_{\frkh},$ it follows that 
\begin{equation*}
\begin{aligned}
S(\wtd{C}((x,u)),(y,v))=&S(\wtd{C}((x,u)),C((y,v))+\wtd{C}((y,v)))\\
=&S(\wtd{C}((x,u)),C((y,v)))+S(\wtd{C}((x,u)),\wtd{C}((y,v)))\\
=&S(\wtd{C}((x,u)),\wtd{C}((y,v)))=0.
\end{aligned}
\end{equation*}
As $S$ is nondegenerate, we have $\wtd{C}((x,u))=0.$ This proves $S_{\frkh}$ is nondegenerate. Finally, we prove that \eqref{RP} holds. For any $(x,u),(y,v)\in \frkh,$ we have 
\begin{equation*} 
\begin{aligned}
S_{\frkh}\left(B((x,u)),(y,v)\right)&=S\left(\wtd{C}(x,0),\wtd{C}((y,v))\right)\\
&=S((x,0),(y,v))-S(C(x,0),C((y,v)))\\
&=S((x,0),(0,v)).\\
\end{aligned}
\end{equation*}
On the other hand, it is similar to prove that $$S_{\frkh}\left((x,u),\wtd{B}((y,v))\right)=S_{\frkh}\left(\wtd{C}((x,u)),\wtd{C}((0,v)\right)=S\left((x,0),(0,v))\right).$$
This implies $S_{\frkh}\left(B((x,u)),(y,v)\right)=S_{\frkh}\left((x,u),\wtd{B}((y,v))\right).$ Hence $(\frkh,B,S_{\frkh})$ is a quadratic Rota-Baxter Lie algebra of weight $-1$ and it is similar to prove that $(\frkh,\wtd{B},S_{\frkh})$ is a quadratic Rota-Baxter Lie algebra of weight $-1.$
\end{proof} 

Then we give the quadratic Rota-Baxter Lie algebra version of Theorem \ref{FL}.

\begin{thm}\label{QRB}
Let $(\frkg,B,S)$ be a quadratic Rota-Baxter Lie algebra of weight $-1$ and $((\frkg_+\bowtie \frkg_-,S'),\frkg_+,\frkg_-)$ be the Manin triple on $(\frkg,B,S).$ Then the operator $C:\frkg_+\bowtie \frkg_-\to \frkg_+\bowtie \frkg_-$ given by 
\begin{equation*}
C\left((B(x),\wtd{B}(y))\right)=\left(B(B(x)+\wtd{B}(y),\wtd{B}(B(x)+\wtd{B}(y)))\right),\ \forall x,y\in\frkg,
\end{equation*}
and the operator $\wtd{C}:\frkg_+\bowtie \frkg_-\to \frkg_+\bowtie \frkg_-$ given by 
\begin{equation*}
\wtd{C}\left((B(x_1),\wtd{B}(x_2))\right)=\left(B\circ \wtd{B}(x_1-x_2),-\wtd{B}\circ B(x_1-x_2)\right),\ \forall x_1,x_2\in \frkg,
\end{equation*}
are Lie algebra projections on $((\frkg_-,S'),\frkg_+,\frkg_-)$ such that $C+\wtd{C}=\id_{\frkg_+\bowtie \frkg_-}.$
\end{thm}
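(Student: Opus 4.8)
The plan is to recognize that the operators $C$ and $\wtd C$ here are literally the operators produced by Theorem \ref{FL} applied to the underlying Rota-Baxter Lie algebra $(\frkg,B)$ of weight $-1$, and that the matched pair $(\frkg_+,\frkg_-,\rhd,\bhd)$ underlying the Manin triple on $(\frkg,B,S)$ is exactly the matched pair on $(\frkg,B)$ from Theorem \ref{RLMP}. Consequently Theorem \ref{FL} already supplies, at no extra cost, that $C$ and $\wtd C$ are idempotent Lie algebra homomorphisms of $\frkg_+\bowtie\frkg_-$ with $C+\wtd C=\id_{\frkg_+\bowtie\frkg_-}$; in other words they are Lie algebra projections on $(\frkg_+,\frkg_-,\rhd,\bhd)$. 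The only genuinely new content, needed to upgrade them to \emph{quadratic} Lie algebra projections in the sense of Definition \ref{FM}, is the $S'$-orthogonality condition $S'(C(w),\wtd C(w'))=0$ for all $w,w'\in\frkg_+\bowtie\frkg_-$, which is the same as $S'(\im C,\im\wtd C)=0$.

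To verify this orthogonality I would avoid expanding arbitrary elements and instead use the explicit descriptions of the two images. From the formula for $C$ (cf. the isomorphism $\pi$ of Corollary \ref{BBIS}) one has $\im C=\{(B(c),\wtd B(c))\mid c\in\frkg\}$, while the earlier proposition identifying $\frkg_2=\im\wtd C$ gives $\im\wtd C=\{(z,-z)\mid z\in\frkg_+\cap\frkg_-\}$. Fixing such a $z$ and writing $z=B(p)=\wtd B(q)$ for some $p,q\in\frkg$, we have $(z,-z)=(B(p),\wtd B(-q))$, so the defining formula \eqref{DS'} of $S'$ yields
\begin{equation*}
S'\big((B(c),\wtd B(c)),(B(p),\wtd B(-q))\big)=S(B(c),-q)+S(B(p),c)=-S(B(c),q)+S(B(p),c).
\end{equation*}
Applying the quadratic Rota-Baxter relation \eqref{RP} gives $S(B(c),q)=S(c,\wtd B(q))=S(c,z)$, and since $B(p)=z$ the symmetry of $S$ gives $S(B(p),c)=S(z,c)=S(c,z)$; the two terms cancel and the expression vanishes. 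This is precisely the condition of Definition \ref{FM}, so $C$ (and likewise $\wtd C$) is a quadratic Lie algebra projection on the Manin triple.

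I expect the calculation to be short, and the main (essentially the only) obstacle is purely bookkeeping: one must rewrite the elements of $\im C$ and $\im\wtd C$ in the canonical form $(B(\cdot),\wtd B(\cdot))$ demanded by \eqref{DS'} before the defining formula for $S'$ can be invoked, and then apply \eqref{RP} in the right direction so that the cancellation appears. Because $S'$ is well defined by Lemma \ref{IV1}, the outcome $0$ does not depend on the chosen representatives $p,q$ of $z$, so no separate consistency check is required. Assembling the orthogonality with the projection and idempotency properties inherited from Theorem \ref{FL} finishes the proof.
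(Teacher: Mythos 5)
Your proposal is correct and follows essentially the same route as the paper: both inherit the idempotency, homomorphism property and $C+\wtd{C}=\id$ from Theorem \ref{FL}, and both reduce the genuinely new content to the orthogonality $S'(\im C,\im \wtd{C})=0$, verified by expanding via \eqref{DS'} and cancelling with one application of \eqref{RP} plus the symmetry of $S$. The only difference is cosmetic: the paper substitutes the formulas for $\wtd{C}$ and $C$ with arbitrary arguments $(B(x_1),\wtd{B}(x_2))$, $(B(y_1),\wtd{B}(y_2))$ directly into \eqref{DS'}, whereas you first parametrize the images as the diagonal $\{(B(c),\wtd{B}(c))\mid c\in\frkg\}$ and the antidiagonal $\{(z,-z)\mid z\in\frkg_+\cap\frkg_-\}$ (invoking the proposition describing $\frkg_2$, which the paper's proof does not need) --- an equivalent computation in different coordinates.
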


\begin{proof}
Let $(\frkg_+,\frkg_-,\rhd,\bhd)$ be the Lie algebra projection on $(\frkg,B).$ By Theorem \ref{FL}, we know that $C$ is a Lie algebra projection on $(\frkg_+,\frkg_-,\rhd,\bhd).$ For any $x_1,\ y_1,\ x_2,\ y_2\in \frkg,$ we have 
\begin{equation*}
\begin{aligned}
&S'\left( \wtd{C}(B(x_1),\wtd{B}(x_2)), C((B(y_1),\wtd{B}(y_2)))\right)\\
=&S'\left((B\circ\wtd{B}(x_1-x_2), -\wtd{B}\circ B(x_1-x_2)), (B(B(y_1)+\wtd{B}(y_2)),\wtd{B}(B(y_1)+\wtd{B}(y_2)))\right)\\
=&S\left(B\circ\wtd{B}(x_1-x_2), B(y_1)+\wtd{B}(y_2)\right)+S\left(B(B(y_1)+\wtd{B}(y_2)),-B(x_1-x_2)\right)\\
=&S\left(B(x_1-x_2), B(B(y_1)+\wtd{B}(y_2))\right)+S\left(-B(x_1-x_2),B(B(y_1)+\wtd{B}(y_2))\right)\\
=&0.
\end{aligned}
\end{equation*}
Therefore, $C$ is a quadratic Lie algebra projection on $((\frkg_+\bowtie \frkg_-,S'),\frkg_+,\frkg_-).$ It is similar to prove that $\wtd{C}$ is a quadratic Lie algebra projection on $((\frkg_+\bowtie \frkg_-,S'),\frkg_+,\frkg_-).$
\end{proof}
Then, by Corollary \ref{RBIS} and Theorem \ref{QRB}, it is not difficult to obtain the following corollary.
\begin{cor}\label{ABV}
With the notations given in Theorem \ref{QRB}, let $\frkg_1=\im C$, $\frkg_2=\im \wtd{C}$, $p_1:\frkg_1\oplus \frkg_2\to \frkg_1\bowtie \frkg_2$ and $p_2:\frkg_1\oplus \frkg_2\to \frkg_1\bowtie \frkg_2$ be the projection from $\frkg_1\oplus \frkg_2$ to $\frkg_1$ and $\frkg_2$ respectively. Then the following holds:
\begin{itemize}
\item[(a)] $(\frkg_+\bowtie \frkg_-,S',C)$ is a quadratic Rota-Baxter Lie algebra that is isomorphic to $(\frkg_1\oplus \frkg_2,S_{\frkg_1}\oplus S_{\frkg_2},p_1);$
\item[(b)] 
$(\frkg_+\bowtie \frkg_-,S',\wtd{C})$ is a quadratic Rota-Baxter Lie algebra that is isomorphic to $(\frkg_1\oplus \frkg_2,S_{\frkg_1}\oplus S_{\frkg_2},p_2).$
\end{itemize}
\end{cor}

Finally, by Corollary \ref{BBIS} and Theorem \ref{QRB}, we get the following corollary.

\begin{cor}\label{RIS}
With the notations given in Theorem \ref{QRB} and Corollary \ref{ABV}. Define $B_1:\frkg_1\to \frkg_1$ and $\wtd{B_1}:\frkg_1\to\frkg_1$ by 
$$
\begin{aligned}
B_1((x,u))=&C((x,0)),\\
\wtd{B_1}((x,u))=&C((0,u)),
\end{aligned}
$$
for any $(x,u)\in \frkg_1.$
Then the following holds: 
\begin{itemize}
\item[(a)] $(\frkg_1,S_{\frkg_1},B_1)$ is a quadratic Rota-Baxter Lie algebra of weight $-1$ that is quadratic Rota-Baxter isomorphic to $(\frkg,S,B)$;
\item[(b)] $(\frkg_1,S_{\frkg_1},\wtd{B_1})$ is a quadratic Rota-Baxter Lie algebra of weight $-1$ that is quadratic Rota-Baxter isomorphic to $(\frkg,S,\wtd{B})$.
\end{itemize}
\end{cor}

\section{Matched pairs of groups and Rota-Baxter groups}\label{4}
In this section, we first recall some basic notions of Rota-Baxter groups $(G,\hB)$ of weight $-1$ and prove that every Rota-Baxter group of weight $-1$ induces a matched pair of groups, which we call the matched pair of groups on $(G,\hB)$. Then propose the notion of group projections on a matched pair of groups, investigating its relationship with the matched pair of groups on $(G,\hB)$. Finally, we study the decomposition of the induced matched pair of groups.

Let $G$ be a group. In this section, we denote the identity element of $G$ by $e.$

\subsection{Preliminaries on matched pairs of groups and Rota–Baxter groups}

To define the matched pairs of groups on a Rota-Baxter group of weight $-1$, firstly, let us recall the notion of Rota-Baxter groups of weight $-1$ introduced in \cite{LG}.

\begin{defi}
Let $G$ be a group. Then an operator $\hB:G\to G$ is called a Rota-Baxter operator of $-1$ on $G$ if it satisfies 
\begin{equation}\label{RBLS}
\hB(a)\hB(b)=\hB((\Ad_{\hB(a)}(b))a),\ \forall a\in G.
\end{equation}
The pair $(G,\hB)$ is called a Rota-Baxter Lie group of weight $-1$. Moreover, if $G$ is a Lie group and $\hB$ is a smooth operator, then $(G,\hB)$ is called a Rota-Baxter Lie group of weight $-1$. 
\end{defi}

It was shown in \cite{LG} that the differentiation of a Rota-Baxter Lie group of weight $-1$ gives rise to a Rota-Baxter Lie algebra of weight $-1.$ 

Then, let us recall from \cite{LG} the definition of Rota-Baxter group homomorphisms. Let $(G,\hB)$ and $(G',\hB')$ be Rota-Baxter groups of weight $-1.$ A map $f:(G,\hB)\to (G',\hB')$ is called a Rota-Baxter group homomorphism if $f$ is a group homomorphism from $G$ to $G'$ satisfying
\begin{equation}\label{RBGH}
f\circ \hB=\hB'\circ f.
\end{equation}
Parallel to the case of Rota-Baxter Lie algebras, for any  Rota-Baxter operator of weight $-1$ on a group, we have the following proposition.
\begin{pro}\label{G-1}
Let $(G,B)$ be a Rota-Baxter group of weight $-1.$ Define the operator $\wtd{\hB}: G \to G$ by 
\begin{equation}\label{HBWB11}
\wtd{\hB}(a)=a\hB(a^{-1}),\ \forall a\in G.
\end{equation}
Then $\wtd{\hB}$ is a Rota-Baxter operator of weight $-1$ on $G.$
\end{pro}
\begin{proof}
For any $a,b\in G,$ we have 
\begin{equation}
\begin{aligned}
\wtd{\hB}(\Ad_{\wtd{\hB}(a)}(b)a)&=a\hB(a^{-1})b\hB(a^{-1})^{-1}a^{-1}a\hB\left(a^{-1}a\hB(a^{-1})b^{-1}\hB(a^{-1})^{-1}a^{-1}\right) \\
&=a\hB(a^{-1})b\hB(a^{-1})^{-1}\hB\left(\hB(a^{-1})b^{-1}\hB(a^{-1})^{-1}a^{-1}\right)\\
&=a\hB(a^{-1})b\hB(a^{-1})^{-1}\hB(a^{-1})\hB(b^{-1})\ (\textit{by \eqref{RBLS}})\\
&=a\hB(a^{-1})b\hB(b^{-1})=\wtd{\hB}(a)\wtd{\hB}(b).\\
\end{aligned}
\end{equation}
This proves $\wtd{\hB}$ is a Rota-Baxter operator of weight $-1$ on $G$ of weight $-1.$
\end{proof}
Conversely, let $(G,\hB)$ be a Rota-Baxter group of weight $-1.$ Then one can readily check that 
\begin{equation}\label{HBWB}
\hB(a)=a\wtd{\hB}(a^{-1}),\ \forall a\in G.
\end{equation}

Let $(G,\hB)$ be a Rota-Baxter group of weight $-1.$ Define $\cdot_{\hB}:G\times G\to G$ by 
$$a\cdot_{\hB} b=\Ad_{\hB(a)}(b)a,\ \forall a,b\in G.$$ By \cite{LG}, we know that $(G,\cdot_{\hB})$ is a group with identity $e$, it is called the descendent group of $(G,B).$ Denote this group by $G_{\hB}.$ It follows from \cite{LG} that $\hB$ and $\wtd{\hB}$ are both group homomorphisms between $G$ and $G_{\hB}$.

Next, let us recall the basic notions about matched pairs of groups given in \cite{MJ}.

\begin{defi}\label{MPG}
Let $G$ and $H$ be groups. Let $\rho:G\times H\to H$ be a group action of $G$ on $H$ and $\mu:H\times G\to G$ be a group action of $H$ on $G.$ Then the quadruple $(G,H,\rho,\mu)$ is called a matched pair of groups if  
the following conditions are satisfied:
\begin{itemize}
\item[(a)] $\rho(a^{-1})(b_1b_2)=(\rho(a^{-1})b_1)(\rho((\mu(b_1^{-1})a)^{-1})b_2),\ \forall a\in G,\ \forall b_1, b_2\in H;$
\item[(b)] $\mu(b^{-1})(a_1a_2)=(\mu(b^{-1})a_1)(\mu((\rho(a_1^{-1})b)^{-1})a_2),\ \forall a_1,a_2\in G, \ \forall b\in H;$
\item[(c)] $\rho(a)e'=e', \ \forall a\in G$;
\item[(d)] $\mu(b)e=e,\ \forall b\in H,$
\end{itemize}
where $e'$ denotes the identity of $H.$
\end{defi}

Recall from \cite{MJ} that the matched pair $(G,H,\rho,\mu)$ induces a Lie group structure on $G\times H$, with multiplication defined by
\begin{equation*}
\begin{aligned}
(a_1,b_1)(a_2,b_2)&=\left((\mu(b_2^{-1})a_1^{-1})^{-1}a_2,b_1(\rho(a_1)b_2)\right),\ \forall (a_1,b_1), (a_2,b_2)\in G\times H,\\
(a,b)^{-1}&=\left((\mu(b)a)^{-1},\rho(a^{-1})b^{-1}\right),\ \forall (a,b)\in G\times H.
\end{aligned}
\end{equation*}
This group is called the bicrossed product of groups $G$ and $H.$

Let $G$ be a group. For any $a\in G,$ define the adjoint action of $a$ on $G$ by 
\begin{equation*}
\Ad_a(b)=aba^{-1}, \ \forall b\in G.
\end{equation*}

Next, we recall the definition of homomorphisms of matched pairs of groups.

\begin{defi}
Let $(G_+,G_-,\rho,\mu)$ and $(G'_+, G'_-,\rhd',\mu')$ be matched pairs of groups. A homomorphism of matched pairs of groups between $(G_+,G_-,\rho,\mu)$ and $(G'_+, G'_-,\rhd',\mu')$ is a pair $(F_+, F_-)$ such that 
\begin{itemize}
\item[(a)] $F_+:G_+\to G'_+$ and $F_-: G_-\to G'_-$ are group homomorphisms;

\item[(b)] $F_+$ and $F_-$ satisfy
\begin{equation}\label{MPGH}
\begin{aligned}
F_+(\mu(b)(a))&=\mu'(F_-(b))(F_+(a)),\ \forall a\in G_+,\ \forall b\in G_-;\\
F_-(\rho(a)(b))&=\rho'(F_+(a))(F_-(b)),\ \forall a\in G_+,\ \forall b\in G_-.
\end{aligned}
\end{equation}
\end{itemize}
\end{defi}

\subsection{The matched pair of groups on a Rota-Baxter group}

Let $(G,B)$ be a Rota-Baxter Lie group of weight $-1.$ Denote $$
\begin{aligned}
G_+&=\im \hB, \quad G_-=\im \wtd{\hB}, \\
H_+&=\ker \wtd{\hB},\quad H_-=\ker \hB.
\end{aligned}
$$ By \cite{LG}, we know that $H_+\subseteq G_+$ and $H_-\subseteq G_-$ are both normal groups.

By \eqref{RBLS}, one can readily obtain the following lemma. It will be used in the proof of Theorem \ref{MGRB} and Lemma \ref{DAG}.
\begin{lem}\label{ID}
Let $(G,\hB)$ be a Rota-Baxter group of weight $-1.$ Then $\hB(e)=e.$
\end{lem}

Now, we state the group version of Theorem \ref{RLMP}.
\begin{thm}\label{MGRB}
Let $(G,B)$ be a Rota-Baxter group of weight $-1.$ Define the operator $\rho:G_+\times G_-\to G_-$ by 
\begin{equation}\label{MG1}
\rho(\hB(a))(\wtd{\hB}(b))=\wtd{\hB}(\Ad_{\hB(a)}(b)), \ \forall a,b\in G,
\end{equation}
and define the operator $\mu:G_-\times G_+\to G_+$ by 
\begin{equation}
\mu(\wtd{\hB}(a))(\hB(b))=\hB(\Ad_{\wtd{\hB}(a)}(b)),\ \forall a,b\in G.
\end{equation}
Then $(G_+,G_-,\rho,\mu)$ is a matched pair of groups.
\end{thm}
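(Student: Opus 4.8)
The plan is to verify directly the four axioms (a)--(d) of Definition \ref{MPG}, after first checking that $\rho$ and $\mu$ are well-defined group actions. For well-definedness of $\rho$, note that if $\wtd{\hB}(b)=\wtd{\hB}(b')$ then $b^{-1}b'\in\ker\wtd{\hB}=H_+$, so I need $\Ad_{\hB(a)}(H_+)\subseteq H_+$; this is exactly the normality of $H_+$ in $G_+=\im\hB$ recorded (from \cite{LG}) above, and symmetrically for $\mu$ via normality of $H_-$ in $G_-$. The action axioms are then immediate from the fact that $\Ad$ is a group homomorphism, $\Ad_{xy}=\Ad_x\Ad_y$: writing $\rho(\hB(a_1)\hB(a_2))(\wtd{\hB}(b))=\wtd{\hB}(\Ad_{\hB(a_1)\hB(a_2)}(b))=\wtd{\hB}(\Ad_{\hB(a_1)}\Ad_{\hB(a_2)}(b))=\rho(\hB(a_1))(\rho(\hB(a_2))(\wtd{\hB}(b)))$, and likewise $\rho(e)=\id_{G_-}$ since $\Ad_e=\id$. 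Axioms (c) and (d) are also immediate: by Lemma \ref{ID} and the formula defining $\wtd{\hB}$ one has $\hB(e)=\wtd{\hB}(e)=e$, so the identity of $G_-$ is $e$, and $\rho(\hB(a))(e)=\wtd{\hB}(\Ad_{\hB(a)}(e))=\wtd{\hB}(e)=e$.

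It remains to prove the two cocycle-type identities (a) and (b). Here I would first observe that the whole construction is symmetric under the involution $\hB\leftrightarrow\wtd{\hB}$: since $\wtd{\wtd{\hB}}=\hB$, this involution interchanges $G_+$ with $G_-$, interchanges $\rho$ with $\mu$, and interchanges axiom (a) with axiom (b). Consequently it suffices to establish (a), after which (b) follows verbatim by the symmetric argument.

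To prove (a), fix $a=\hB(\alpha)\in G_+$ and $b_i=\wtd{\hB}(\beta_i)\in G_-$. On the left I would use that $\wtd{\hB}$ is itself a Rota-Baxter operator (Proposition \ref{G-1}) to write $b_1b_2=\wtd{\hB}(\beta_1)\wtd{\hB}(\beta_2)=\wtd{\hB}\big((\Ad_{b_1}(\beta_2))\beta_1\big)$, and then apply the definition of $\rho$ together with $\Ad_{a^{-1}}\Ad_{b_1}=\Ad_{a^{-1}b_1}$ to reduce $\rho(a^{-1})(b_1b_2)$ to $\wtd{\hB}$ of a single product of conjugates. On the right I would expand the inner term $\mu(b_1^{-1})(a)=\hB(\Ad_{b_1^{-1}}(\alpha))$, feed it into the outer $\rho$, and again collapse the product of the two $\wtd{\hB}$-values by Proposition \ref{G-1}. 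Matching the two expressions then comes down to comparing the conjugation by the mixed element $a^{-1}b_1\in G_+G_-$ appearing on the left with the conjugation produced on the right, which is reconciled using the Rota-Baxter relation \eqref{RBLS} and the factorization formula \eqref{HBWB}.

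The main obstacle is precisely this last matching step in (a): the feedback term $\mu(b_1^{-1})(a)$ that Definition \ref{MPG} inserts inside the second copy of $\rho$ is what encodes the non-triviality of the matched pair, and the two sides coincide only after the Rota-Baxter identity \eqref{RBLS} is used to identify the two conjugations up to an element of $\ker\wtd{\hB}$. Bookkeeping the inverses carefully --- distinguishing the ambient $G$-inverse from the descendent inverse in $G_{\hB}$, and using $\hB(a)=a\wtd{\hB}(a^{-1})$ to move between the two images --- is the delicate point, and is exactly where the hypothesis of weight $-1$ enters. This parallels the role of the Jacobi identity and Lemma \ref{RBRL} in the proof of the Lie-algebra analogue, Theorem \ref{RLMP}.
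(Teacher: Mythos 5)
Your proposal is correct and matches the paper's own argument step for step: well-definedness of $\rho$ and $\mu$ via normality of $H_+$ in $G_+$ and $H_-$ in $G_-$, the action and identity axioms from multiplicativity of $\Ad$ together with $\hB(e)=e$ (Lemma \ref{ID}), and axiom (a) by collapsing products of $\wtd{\hB}$-values with Proposition \ref{G-1} and then matching conjugators, the deferred matching step being exactly the paper's identity \eqref{MGE4}, namely $\wtd{\hB}(\hB(a)^{-1}b\hB(a))\,\hB(\wtd{\hB}(b)^{-1}a\wtd{\hB}(b))^{-1}=\hB(a)^{-1}\wtd{\hB}(b)$, which is indeed derived there from \eqref{RBLS} and the factorization \eqref{HBWB} just as you predict. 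Your derivation of axiom (b) from (a) via the involution $\hB\leftrightarrow\wtd{\hB}$ is legitimate (since $\wtd{\wtd{\hB}}=\hB$ and the construction applied to $(G,\wtd{\hB})$ returns the quadruple $(G_-,G_+,\mu,\rho)$, whose axiom (a) is axiom (b) for $(G_+,G_-,\rho,\mu)$) and is simply a tidy formalization of the paper's ``it is similar to verify (b)'', not a genuinely different route.
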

\begin{proof}
First, let us verify that $\rho$ is well defined. For any $a\in G$ and $b\in H_+$, we have 
\begin{equation}\label{MPG1}
\begin{aligned}
\rho(\hB(a))(\wtd{\hB}(b))&=\wtd{\hB}(\Ad_{\hB(a)}(b))=\wtd{\hB}(\hB(a)b\hB(a)^{-1})
\end{aligned}
\end{equation}
As $H_+$ is the normal subgroup of $G_+,$ we know that $\hB(a)b\hB(a)^{-1}\in H_+.$
Then we have $\rho(\hB(a))(\wtd{\hB}(b))=e.$ This shows that $\rho$ is well defined. It is similar to check that $\mu$ is also well defined. It follows that $\rho$ is the representation of $G_+$ on $G_-$ and $\mu$ is the representation of $G_-$ on $G_+.$ Next, we prove that (a) of Definition \ref{MPG} holds. For any $a,b\in G,$ we have
\begin{equation}\label{MGE1}
\rho(\hB(a)^{-1})(\wtd{\hB}(b))=\wtd{\hB}(\hB(a)^{-1}b\hB(a))=\hB(a)^{-1}b\hB(a)\hB(\hB(a)^{-1}b^{-1} \hB(a)).
\end{equation}
By \eqref{RBLS}, 
$$\hB(a)\hB(\hB(a)^{-1}b^{-1} \hB(a))=\hB(b^{-1}a).$$ It follows that $\hB(\hB(a)^{-1}b^{-1}\hB(a))=\hB(a)^{-1}\hB(b^{-1}a).$ Then by \eqref{MGE1}, we have 
\begin{equation}\label{MGE2}
\rho(\hB(a)^{-1})(\wtd{\hB}(b))=\hB(a)^{-1}b\hB(b^{-1}a). 
\end{equation}
And it is similar to prove that 
\begin{equation}\label{MGE3}
\mu(\wtd{\hB}(b)^{-1})(\hB(a))=\hB(\wtd{\hB}(b)^{-1}a\wtd{\hB}(b))=(\wtd{\hB}(a^{-1}b)^{-1}a^{-1}\wtd{\hB}(b))^{-1}. 
\end{equation}
It follows from \eqref{MGE2} and \eqref{MGE3} that 
\begin{equation}\label{MGE4}
\begin{aligned}
\wtd{\hB}(\hB(a)^{-1}b\hB(a))\hB(\wtd{\hB}(b)^{-1}a\wtd{\hB}(b))^{-1}&=\hB(a)^{-1}b\hB(b^{-1}a)\wtd{\hB}(a^{-1}b)^{-1}a^{-1}\wtd{\hB}(b)\\
&=\hB(a)^{-1}bb^{-1}aa^{-1}\wtd{\hB}(b)\ \textit{(by the definition of $\wtd{\hB}$)}\\
&=\hB(a)^{-1}\wtd{\hB}(b)
.
\end{aligned}
\end{equation}
Then for any $a,b_1,b_2\in G,$ we get 
\begin{equation*}
\begin{aligned}
&(\rho(\hB(a)^{-1})\wtd{\hB}(b_1))\left(\rho((\mu(\wtd{\hB}(b_1)^{-1})\hB(a))^{-1})\wtd{\hB}(b_2)\right)\\
=&\wtd{\hB}(\hB(a)^{-1}b_1\hB(a))\wtd{\hB}\left(\Ad_{\hB(\wtd{\hB}(b_1)^{-1}a\wtd{\hB}(b_1))^{-1}}b_2 \right)\\
=&\wtd{\hB}\left(\wtd{\hB}((\Ad_{\wtd{\hB}(\hB(a)^{-1}b_1\hB(a))\hB(\wtd{\hB}(b_1)^{-1}a\wtd{\hB}(b_1))^{-1}}b_2)\hB(a)^{-1}b_1\hB(a)\right)\ (\textit{by \eqref{RBLS}})\\
=&\wtd{\hB}\left(\wtd{\hB}((\Ad_{\hB(a)^{-1}\wtd{\hB}(b_1)}b_2)\hB(a)^{-1}b_1\hB(a)\right) \ (\textit{by \eqref{MGE4} }).\\
\end{aligned}
\end{equation*}
It follows that 
\begin{equation*}
\begin{aligned}
&(\rho(\hB(a)^{-1})\wtd{\hB}(b_1))\left(\rho((\mu(\wtd{\hB}(b_1)^{-1})\hB(a))^{-1})\wtd{\hB}(b_2)\right)\\
=&\wtd{\hB}\left(\hB(a)^{-1}\wtd{\hB}(b_1)b_2 \wtd{\hB}(b_1)^{-1}b_1\hB(a)\right)\\
=&\rho(\hB(a)^{-1}) \wtd{\hB}(\wtd{\hB}(b_1)b_2\wtd{\hB}(b_1)^{-1}b_1)\\
=&\rho(\hB(a)^{-1})(\wtd{\hB}(b_1)\wtd{\hB}(b_2)).
\end{aligned}
\end{equation*}
This proves (a) of Definition \ref{MPG} and it is similar to prove (b) of Definition \ref{MPG}. Finally, conditions (c) and (d) of Definition \ref{MPG} are readily verified. Therefore, $(G_+,G_-,\rho,\mu)$ is a matched pair of groups.
\end{proof}
The matched pair of groups $(G_+,G_-,\rho,\mu)$ given in the above theorem is called \textbf{the matched pair of groups on the Rota-Baxter group $(G,\hB).$}

By the definition of the matched pair of groups on a Rota-Baxter group of weight $-1$, we obtain the following Lemma. It will be used in the proof of Theorem \ref{TMG}

\begin{lem}\label{SLS}
Let $(G,\hB)$ be a Rota-Baxter group of weight $-1$ and $(G_+,G_-,\rho,\mu)$ be the matched pair of groups on $(G,\hB).$
Then for any $a,b\in G,$
\begin{equation*}
\left(\hB(a^{-1})^{-1},\wtd{\hB}(a)\right) \left(\hB(b^{-1})^{-1},\wtd{\hB}(b) \right)=\left(\hB(b^{-1}a^{-1}),\wtd{\hB}(ab) ) \right)
\end{equation*}
\end{lem}
\begin{proof}
By the definition, we have
\begin{equation*}
\begin{aligned}
&\left(\hB(a^{-1})^{-1},\wtd{\hB}(a)\right) \left(\hB(b^{-1})^{-1},\wtd{\hB}(b) \right)\\
=&\left( \left(\hB\left(\wtd{\hB}(b)^{-1}a^{-1}\wtd{\hB}(b) \right)      \right)^{-1}\hB(b^{-1})^{-1},  
\wtd{\hB}(a)\wtd{\hB}\left(
\hB(a^{-1})^{-1}b\hB(a^{-1})               \right)
\right)\\
=&\left( \left(\hB(b^{-1})\hB\left(\wtd{\hB}(b)^{-1}a^{-1}\wtd{\hB}(b) \right)\right)^{-1},  
\wtd{\hB}(a)\wtd{\hB}\left(
\hB(a^{-1})^{-1}b\hB(a^{-1})               \right)
\right)\\
=&\left( \left(\hB\left(\hB(b^{-1})\wtd{\hB}(b)^{-1}a^{-1}\wtd{\hB}(b)\hB(b^{-1})^{-1} b^{-1}\right) \right)^{-1},  
\wtd{\hB}\left(\wtd{\hB}(a)
\hB(a^{-1})^{-1}b\hB(a^{-1})\wtd{\hB}(a)^{-1}a\right)\right)\ (\textit{by \eqref{RBLS}})\\
\end{aligned}
\end{equation*}
for any $a,b\in G.$
It follows that 
\begin{equation*}
\begin{aligned}
&\left(\hB(a^{-1})^{-1},\wtd{\hB}(a)\right) \left(\hB(b^{-1})^{-1},\wtd{\hB}(b) \right)\\
=&\left( \left(\hB\left(b^{-1}a^{-1}bb^{-1}\right) \right)^{-1},  
\wtd{\hB}\left(aba^{-1}a\right)\right)\ (\textit{by \eqref{RBLS}})
\\
=&\left( \left(\hB\left(b^{-1}a^{-1}\right) \right)^{-1},  
\wtd{\hB}\left(ab\right)\right)\ (\textit{by \eqref{HBWB11} and \eqref{HBWB}}).
\\
\end{aligned}
\end{equation*}
\end{proof}

In the next proposition, we show that every homomorphism of Rota-Baxter groups of weight $-1$ induces a homomorphism of matched pairs of groups.

\begin{pro}
Let $(G,\hB)$, $(G',\hB')$ be Rota-Baxter groups of weight $-1$. Let $(G_+,G_-,\rhd,\bhd)$ and $(G'_+,G'_-,\rhd',\bhd')$ be the matched pairs of groups on $(G,\hB)$ and $(G',\hB')$ respectively. Let $F:(G,\hB)\to (G', \hB')$ be a homomorphism of Rota-Baxter groups. Let $F_+:G_+\to G'_+$ and $F_-:G_-\to G'_-$ be the restrction of $F$ to $G_+$ and $G_-$ respectively.
Then $(F_+,F_-)$ is a homomorphism of matched pairs of groups.
\end{pro}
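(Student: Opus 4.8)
The plan is to verify directly the two conditions in the definition of a homomorphism of matched pairs of groups: that $F_+$ and $F_-$ are group homomorphisms with the correct codomains (condition (a)), and that they satisfy the compatibility relations \eqref{MPGH} (condition (b)). The one fact that must be established before anything else is that $F$ intertwines not only $\hB$ but also its companion $\wtd{\hB}$; once this is in hand, both conditions reduce to routine applications of the defining formulas of $\rho,\mu,\rho',\mu'$ from Theorem \ref{MGRB} together with the intertwining relation \eqref{RBGH}.

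First I would prove that $F\circ\wtd{\hB}=\wtd{\hB}'\circ F$. Using the description $\wtd{\hB}(a)=a\hB(a^{-1})$ from Proposition \ref{G-1}, the fact that $F$ is a group homomorphism, and \eqref{RBGH}, one computes $F(\wtd{\hB}(a))=F(a)F(\hB(a^{-1}))=F(a)\hB'(F(a)^{-1})=\wtd{\hB}'(F(a))$. Combined with \eqref{RBGH}, this shows $F(\im\hB)\subseteq\im\hB'$ and $F(\im\wtd{\hB})\subseteq\im\wtd{\hB}'$, so that $F_+:G_+\to G'_+$ and $F_-:G_-\to G'_-$ are well defined; being restrictions of the group homomorphism $F$, they are themselves group homomorphisms, which settles condition (a).

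For condition (b), I would verify the relation for $\rho$ and observe that the one for $\mu$ is entirely symmetric. Writing a typical element of $G_+$ as $\hB(x)$ and one of $G_-$ as $\wtd{\hB}(y)$, the definition \eqref{MG1} gives $\rho(\hB(x))(\wtd{\hB}(y))=\wtd{\hB}(\Ad_{\hB(x)}(y))$. Applying $F_-$ and pushing $F$ through the adjoint action (since $F$ is a group homomorphism, $F(\Ad_g(h))=\Ad_{F(g)}(F(h))$) and then through $\wtd{\hB}$ and $\hB$ via the two intertwining relations, the left-hand side becomes $\wtd{\hB}'(\Ad_{\hB'(F(x))}(F(y)))$. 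On the other side, the $G'$-version of \eqref{MG1} identifies $\rho'(F_+(\hB(x)))(F_-(\wtd{\hB}(y)))=\rho'(\hB'(F(x)))(\wtd{\hB}'(F(y)))$ with exactly the same expression, yielding the desired equality. The computation for $\mu$ proceeds identically, using $\mu(\wtd{\hB}(y))(\hB(x))=\hB(\Ad_{\wtd{\hB}(y)}(x))$ in place of \eqref{MG1}.

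There is no serious obstacle here: the only step requiring any thought is the intertwining identity $F\circ\wtd{\hB}=\wtd{\hB}'\circ F$, because the definition of $\wtd{\hB}$ mixes the group multiplication with $\hB$, so one must exploit that $F$ respects both structures simultaneously. Once this is available, the verification of \eqref{MPGH} is a direct chase of definitions in which $F$ is commuted past $\Ad$, $\hB$, and $\wtd{\hB}$ in turn.
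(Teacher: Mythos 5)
Your proposal is correct and follows essentially the same route as the paper: a direct verification of conditions (a) and (b) by writing elements of $G_+$ and $G_-$ as $\hB(x)$ and $\wtd{\hB}(y)$, commuting $F$ past $\Ad$, $\hB$ and $\wtd{\hB}$, and invoking the defining formulas of $\rho,\mu,\rho',\mu'$ from Theorem \ref{MGRB}. If anything, you are slightly more careful than the paper: you isolate and prove the intertwining identity $F\circ\wtd{\hB}=\wtd{\hB}'\circ F$ (via $\wtd{\hB}(a)=a\hB(a^{-1})$) and use it to justify that $F_+$ and $F_-$ land in $G'_+$ and $G'_-$, steps the paper dispatches by citing \eqref{RBGH} and ``it is easy to see,'' even though \eqref{RBGH} alone only gives the untilded relation.
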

\begin{proof}
It is easy to see that $F_+$ and $F_-$ are both group homomorphisms. Then we prove that \eqref{MPGH} holds. For any $a,b\in G,$ we have
$$
\begin{aligned}
\mu'(F_+(\hB(a))) (F_-(\wtd{\hB}(b)))
&=\mu'(\hB'(F(a)))(\wtd{\hB'}(F(b)))\ (\textit{by \eqref{RBGH}})\\
&=\wtd{\hB'}(\Ad_{\hB'(F(a))} F(b) )\\
&=\wtd{\hB'}(\Ad_{F(\hB(a))} F(b) )\ (\textit{by \eqref{RBGH}})\\
&=\wtd{\hB'}\circ F_-(\Ad_{\hB(a)} b).
\end{aligned}
$$
Then we have 
$$
\begin{aligned}
\mu'(F_+(\hB(a))) (F_-(\wtd{\hB}(b)))
&=F\circ \wtd{\hB}(\Ad_{\hB(a)} b)\ (\textit{by \eqref{RBGH}})\\
&=F\left(\mu(\hB(a))(\wtd{\hB}(b))\right).
\end{aligned}
$$
It is similar to prove that
$$\mu'(F_-(\wtd{\hB}(a)))(F_+(\hB(b)))=F_+(\mu(\wtd{\hB}(a))(\hB(b))).$$ Therefore, $(F_+,F_-)$ is a homomorphism of matched pairs of groups.
\end{proof}

Let $(G,\hB)$ be a Rota-Baxter Lie group of weight $-1$ and $(\frkg)$ be the infinitesimal Lie algebra of $G.$  Recall from \cite{LG} that the tangent map of $\hB$ is given by 
\begin{equation*}
B(x)=\hB_{\ast e}(\exp(tx))=\left.\frac{d}{d t}\right|_{t=0}\hB( \exp(tx)), \ \forall x\in \frkg.
\end{equation*}

The matched pair of groups $(G_+,G_-,\rho,\mu)$ given in the above theorem is called the matched pair of groups on $(G_+,G_-).$

Let $(G_+,G_-,\rho,\mu)$ be the matched pair of groups on $(G,\hB).$ It follows directly from the definition that $\rho$ and $\mu$ are smooth maps. In the next proposition, we establish the relationship between the matched pair of groups on a Rota-Baxter group and the matched pair of Lie algebras on a Rota-Baxter Lie algebra.

\begin{pro}
Let $(G,\hB)$ be a Rota-Baxter Lie group of weight $-1$ whose infinitesimal Rota-Baxter Lie algebra is $(\frkg,B=\hB_{\ast e})$ and $(G_+,G_-,\rho,\mu)$ be the matched pair of groups on $(G,\hB).$ Let $\frkg_+=\im B$,  $\frkg_-=\im \wtd{B}$. Let $\rhd:\frkg_+\times \frkg_-\to \frkg_-$ be the operator given by
\begin{equation*}
B(x)\rhd \wtd{B}(y)=\left.\frac{d}{d t}\right|_{t=0}\left.\frac{d}{d s}\right|_{s=0}\rho(\hB(\exp(tx)))(\wtd{\hB}(\exp(sy)
)),\ \forall x,y\in \frkg,
\end{equation*}
and $\bhd:\frkg_-\times\frkg_+\to \frkg_+$ be the operator given by 
\begin{equation*}
\wtd{B}(x)\bhd B(y)=\left.\frac{d}{d t}\right|_{t=0}\left.\frac{d}{d s}\right|_{s=0}\mu(\wtd{\hB}(\exp(tx)))(\hB(\exp(sy)
)),\ \forall x,y\in \frkg.
\end{equation*}

Then $(\frkg_+,\frkg_-,\rhd,\bhd)$ is the matched pair of Lie algebras on $(\frkg,B).$ 

\end{pro}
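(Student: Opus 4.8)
The plan is to reduce the statement to the explicit formulas that define the matched pair of Lie algebras on $(\frkg,B)$, namely $B(x)\rhd\wtd{B}(y)=\wtd{B}([B(x),y])$ from \eqref{bhd} and $\wtd{B}(x)\bhd B(y)=B([\wtd{B}(x),y])$. Thus it suffices to show that the double derivatives defining $\rhd$ and $\bhd$ in the statement coincide with these right-hand sides. First I would record the three infinitesimal facts I shall use: $\hB(e)=e$ (Lemma \ref{ID}) and likewise $\wtd{\hB}(e)=e$, so that both $\hB$ and $\wtd{\hB}$ send the identity to the identity; $\hB_{\ast e}=B$ by hypothesis; and $\wtd{\hB}_{\ast e}=\wtd{B}$, which follows by differentiating $\wtd{\hB}(a)=a\hB(a^{-1})$ along $\exp(tz)$ and applying the Leibniz rule for paths through $e$, giving $\wtd{\hB}_{\ast e}(z)=z-B(z)=\wtd{B}(z)$.

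For $\rhd$, I would differentiate the defining identity $\rho(\hB(a))(\wtd{\hB}(b))=\wtd{\hB}(\Ad_{\hB(a)}(b))$ from \eqref{MG1} in two stages. Fixing $a=\exp(tx)$ and setting $b=\exp(sy)$, the inner curve $s\mapsto \Ad_{\hB(a)}(\exp(sy))=\exp\big(s\,\Ad_{\hB(a)}(y)\big)$ passes through $e$ at $s=0$ with velocity $\Ad_{\hB(a)}(y)$, so since $\wtd{\hB}(e)=e$ the chain rule gives $\frac{d}{ds}\big|_{0}\rho(\hB(a))(\wtd{\hB}(\exp(sy)))=\wtd{\hB}_{\ast e}(\Ad_{\hB(a)}(y))=\wtd{B}(\Ad_{\hB(a)}(y))$. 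Differentiating the result in $t$ at $t=0$ and using linearity of $\wtd{B}$ together with $\frac{d}{dt}\big|_{0}\Ad_{\hB(\exp(tx))}(y)=[\,\hB_{\ast e}(x),y\,]=[B(x),y]$, I obtain $B(x)\rhd\wtd{B}(y)=\wtd{B}([B(x),y])$, which is exactly \eqref{bhd}.

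The computation for $\bhd$ is entirely symmetric: starting from $\mu(\wtd{\hB}(a))(\hB(b))=\hB(\Ad_{\wtd{\hB}(a)}(b))$, the $s$-derivative yields $B(\Ad_{\wtd{\hB}(a)}(y))$ and the subsequent $t$-derivative yields $B([\wtd{B}(x),y])$, matching the defining formula for $\bhd$. Since the matched pair of Lie algebras on $(\frkg,B)$ is by definition the one carrying precisely these two actions, the differentiated quadruple $(\frkg_+,\frkg_-,\rhd,\bhd)$ is that matched pair. The only genuinely delicate point is the iterated differentiation: one must check that each inner curve is based at $e$ so that the relevant tangent map ($\wtd{\hB}_{\ast e}$ or $\hB_{\ast e}$) may be applied, and that interchanging the roles of the $s$- and $t$-derivatives is legitimate; both are guaranteed by the smoothness of $\rho$ and $\mu$ and by $\hB(e)=\wtd{\hB}(e)=e$, so I expect no essential obstruction beyond careful bookkeeping.
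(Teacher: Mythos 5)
Your proposal is correct and follows essentially the same route as the paper: differentiate the defining identities $\rho(\hB(a))(\wtd{\hB}(b))=\wtd{\hB}(\Ad_{\hB(a)}(b))$ and $\mu(\wtd{\hB}(a))(\hB(b))=\hB(\Ad_{\wtd{\hB}(a)}(b))$ along exponential curves, pull out the tangent maps $\wtd{\hB}_{\ast e}=\wtd{B}$ and $\hB_{\ast e}=B$, and recover the defining formulas $B(x)\rhd\wtd{B}(y)=\wtd{B}([B(x),y])$ and $\wtd{B}(x)\bhd B(y)=B([\wtd{B}(x),y])$. Your extra bookkeeping (verifying $\hB(e)=\wtd{\hB}(e)=e$ and computing $\wtd{\hB}_{\ast e}(z)=z-B(z)$) makes explicit steps the paper leaves implicit, but the argument is the same.
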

\begin{proof}
It follows from the definition of $B$ that for any $x,y\in\frkg,$

\begin{equation*}
\begin{aligned}
B(x)\rhd \wtd{B}(y)&=\left.\frac{d}{d t}\right|_{t=0}\left.\frac{d}{d s}\right|_{s=0}\rho(\hB(\exp(tx)))(\wtd{\hB}(\exp(sy)
))\\
&=\left.\frac{d}{d t}\right|_{t=0}\left.\frac{d}{d s}\right|_{s=0}\wtd{\hB}\left(\Ad_{\hB(\exp(tx))}\exp(sy)\right)\\
&=\wtd{\hB}_{\ast e} \left( \left.\frac{d}{d t}\right|_{t=0}\left.\frac{d}{d s}\right|_{s=0} \Ad_{\hB(\exp(tx))} \exp(sy)         \right)\\
&=\wtd{B}([B(x),y]).
\end{aligned}
\end{equation*}
It is similar to prove that $\wtd{B}(x)\bhd B(y)=B([\wtd{B}(x),y]).$ Then we know that $(\frkg_+,\frkg_-,\rhd,\bhd)$ is the matched pair of Lie algebras on $(\frkg,B).$

\end{proof}

\subsection{Rota-Baxter groups of weight $-1$ and group projections on matched pairs of groups}
Let $(G,\hB)$ be a Rota-Baxter group of weight $-1.$ In this section, we denote $a^{\dagger}=\Ad_{\hB(a)} a^{-1}$ for any $a\in G.$
\begin{defi}\label{GPMD}
Let $(G_+,G_-,\rho,\mu)$ be a matched pair of groups. An operator $\mathcal{C}:G_+\bowtie G_-\to G_+\bowtie G_-$ is called a \textbf{group projection on $(G_+,G_-,\rho,\mu)$} if $\hC:G_+\bowtie G_-\to G_+\bowtie G_-$ is an idempotent group homomorphism and the operator $\wtd{\hC}:G_+\bowtie G_-\to G_+\bowtie G_-$ given by 
$$\wtd{\hC}((a,b))=(a,b)\hC((a,b)^{-1}), \ \forall (a,b)\in G_+\bowtie G_-$$
is also an idempotent group homomorphism.
\end{defi}

It is not hard to check that $\hC$ is a Rota-Baxter operator of weight $-1$ on $G_+\bowtie G_-.$ Then it follows from Proposition \ref{G-1} that $\wtd{\hC}$ is a Rota-Baxter operator of weight $-1$ on $G_+\bowtie G_-.$

By the definition of group projections on matched pairs of groups, we have the following proposition.
\begin{pro}\label{GPMG}
Let $(G_+,G_-,\rho,\mu)$ be a matched pair of groups and $\mathcal{C}:G_+\bowtie G_-\to G_+\bowtie G_-$ be a group projection on $(G_+,G_-,\rho,\mu)$. Define $\wtd{C}:G_+\bowtie G_-\to G_+\bowtie G_-$ by
$$\wtd{\hC}((a,b))=(a,b)\hC((a,b)^{-1}), \ \forall (a,b)\in G_+\bowtie G_-.$$ Then the following holds:
\begin{itemize}
\item[(a)] $\hC$ and $\wtd{\hC}$ are Rota-Baxter operators of weight $-1$ on $G_+\bowtie G_-$ such that
\begin{equation}\label{IDC}
\wtd{\hC}((a,b))\hC((a,b))=\hC((a,b))\wtd{\hC}((a,b))=(a,b)
\end{equation}
for any $(a,b)\in G_+\bowtie G_-;$
\item[(b)]
$\wtd{\hC}$ is a group projection on $(G_+,G_-,\rhd,\bhd)$.
\end{itemize}
\end{pro}
\begin{proof}
Denote the identity elements of $G_+$ and $G_-$ by $e$ and $e'$, respectively. 
For any $a,b\in G_+\bowtie G_-$, we have
\begin{equation*}
\hC(\hC(a)b\hC(a)^{-1}a)=\hC(\hC(a))\hC(b)\hC(\hC(a)^{-1})\hC(a).
\end{equation*}
As $\hC$ is an idempotent group homomorphism, it follows that 
\begin{equation*}
\hC(\hC(a)b\hC(a)^{-1}a)=\hC(a)\hC(b)=\hC(ab).
\end{equation*}
This shows that $\hC$ and $\wtd{\hC}$ are both Rota-Baxter operators of weight $-1$. By the definition, we know that $\wtd{\hC}(a)\hC(a)=a$ for any $a\in G_+\bowtie G_-$. Then by 
\eqref{HBWB11} and \eqref{HBWB}, we have $\hC(a)\wtd{\hC}(a)=a.$
Finally, it follows from the definition that $\wtd{\hC}$ is a group projection on $(G_+,G_-,\rhd,\bhd).$
\end{proof}

Then we give an equivalent characterisation of group projections on matched pairs of groups.
\begin{pro}\label{GMEQ}
Let $(G_+,G_-,\rho,\mu)$ be a matched pair of groups and $\mathcal{C}:G_+\bowtie G_-\to G_+\bowtie G_-$ be an idempotent group homomorphism and $\wtd{\hC}:G_+\bowtie G_-\to G_+\bowtie G_-$ be the operator given by
$$\wtd{\hC}((a,b))=(a,b)\hC((a,b)^{-1}), \ \forall (a,b)\in G_+\bowtie G_-.$$ Then the following statements are equivalent:
\begin{itemize}
\item[(a)] $\hC$ is a group projection on $(G_+,G_-,\rho,\mu);$
\item[(b)] For any $(a_1,b_1),(a_2,b_2)\in G_+\bowtie G_-,$
\begin{equation}\label{EQE1}
\hC((a_1,b_1))\wtd{\hC}((a_2,b_2))=\wtd{\hC}((a_2,b_2))\hC((a_1,b_1)).
\end{equation}
\end{itemize}
\end{pro}
\begin{proof}
(a)$\Rightarrow$ (b) The proof is obvious.

(b)$\Rightarrow$(a) 
First, let us prove that $\wtd{C}$ is idempotent. By the defintion, we have $$\hC(\wtd{\hC}((a,b)))=\hC((a,b)\hC((a,b)^{-1}))=\hC((a,b))\hC(\hC((a,b)^{-1}))=\hC((a,b))\hC((a,b))^{-1}=e$$ for any $(a,b)\in G_+\bowtie G_-.$ It follows that
$$\wtd{\hC}^{2}((a,b))=\wtd{\hC}((a,b))\hC(\wtd{\hC}((a,b))^{-1})=\wtd{\hC}((a,b))\hC(\wtd{\hC}((a,b)))^{-1}=\wtd{\hC}((a,b)).$$
This shows that $\wtd{\hC}$ is idempotent. Then we prove that $\wtd{\hC}$ is a group homomorphism. It follows from the definition and \eqref{EQE1} that  
$$
\begin{aligned}
(a_1,b_1)(a_2,b_2)&=\wtd{\hC}((a_1,b_1))\hC((a_1,b_1))\wtd{\hC}((a_2,b_2))\hC((a_2,b_2))\\&=\wtd{\hC}((a_1,b_1))\wtd{\hC}((a_2,b_2))\hC((a_1,b_1))\hC((a_2,b_2)).
\end{aligned}
$$
On the other hand, we have
$$
\begin{aligned}
(a_1,b_1)(a_2,b_2)&=\wtd{\hC}((a_1,b_1)(a_2,b_2))\hC((a_1,b_1)(a_2,b_2))\\&=\wtd{\hC}((a_1,b_1)(a_2,b_2))\hC((a_1,b_1))\hC((a_2,b_2))
\end{aligned}
$$
This implies $\wtd{\hC}((a_1,b_1)(a_2,b_2))=\wtd{\hC}((a_1,b_1))\wtd{\hC}((a_2,b_2)).$ It follows that $\wtd{\hC}$ is a group homomorphism.
Therefore, $\hC$ is a group projection on $(G_+,G_-,\rho,\mu).$
\end{proof}

Now, we show that if there is a group projection on a matched pair of groups $(G_+,G_-,\rhd,\bhd)$, then there is a direct sum decomposition on $G_+\bowtie G_-$. 

\begin{pro}\label{DSGP}
Let $(G_+,G_-,\rhd,\bhd)$ be a matched pair of groups. Let $\hC$ be a group projection on $(G_+,G_-,\rhd,\bhd)$. Denote $G_1=\im \hC$ and $G_2=\im \wtd{\hC}$. Then $G_+\bowtie G_- =  G_1\oplus G_2$, where $G_1\oplus G_2$ is the direct sum of $G_1$ and $G_2$ as groups.
\end{pro}
\begin{proof}
The proof follows directly from Definition \ref{GPMD} and Proposition \ref{GPMG}.

\end{proof}

The next Proposition is the group analogue of Proposition \ref{MR}.
\begin{pro}\label{GS}
Let $(G_+,G_-,\rho,\mu)$ be a matched pair of groups and $\mathcal{C}$ be a group projection on $(G_+,G_-,\rho,\mu)$. Let $G=\im \mathcal{C}$. Denote the identity element of $G_+$ by $e$. Define the operator
$\hB_1:G\to G$ by
\begin{equation*}
\hB((a,b))=\hC((e,b)),\ \forall (a,b)\in G,
\end{equation*}
and the operator $\hB_2:G\to G$ by
\begin{equation*}
\wtd{\hB}((a,b))=(a,b)\hC((e,b^{-1})),\ \forall (a,b)\in G.
\end{equation*}
Then $\hB_1$ and $\hB_2$ are both Rota-Baxter operators of weight $-1$ on $G$ such that 
$$\hB((a,b))\wtd{\hB}((a,b)^{-1})^{-1}=(a,b)$$ for any $(a,b)\in G$.
\end{pro}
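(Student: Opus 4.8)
The plan is to prove Proposition \ref{GS} as the multiplicative counterpart of Proposition \ref{MR}, using the fact (Proposition \ref{GPMG}) that the group projection $\mathcal{C}$ is itself a Rota-Baxter operator of weight $-1$ on $\Gamma:=G_+\bowtie G_-$. By Proposition \ref{G-1} the complementary operator $\wtd{\mathcal{C}}$, given by $\wtd{\mathcal{C}}(g)=g\mathcal{C}(g^{-1})$, is again a Rota-Baxter operator of weight $-1$ on $\Gamma$, and it is $\wtd{\mathcal{C}}$ that defines the two operators in the statement, namely $\hB((a,b))=\wtd{\mathcal{C}}((a,e))$ and $\wtd{\hB}((a,b))=\wtd{\mathcal{C}}((e,b))$. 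Since $\mathcal{C}$ is an idempotent homomorphism, $\mathcal{C}(\wtd{\mathcal{C}}(g))=\mathcal{C}(g)\mathcal{C}(g)^{-1}=e$, so $\wtd{\mathcal{C}}$ carries $\Gamma$ into $\ker\mathcal{C}=G$; in particular $\hB$ and $\wtd{\hB}$ take values in $G$. As $\ker\mathcal{C}$ is a normal subgroup of $\Gamma$, it is a group in which products, inverses and adjoint actions agree with those of $\Gamma$. This disposes of well-definedness and leaves only the Rota-Baxter identity \eqref{RBLS} to be verified.

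Next I would record the bookkeeping relations that replace the linear splitting $B+\wtd{B}=\id$ used in the proof of Proposition \ref{MR}. A direct computation with the bicrossed product multiplication gives $(a,b)=(e,b)\cdot_{\bowtie}(a,e)$, so for $(a,b)\in\ker\mathcal{C}$ the homomorphism property of $\mathcal{C}$ yields $\mathcal{C}((e,b))\,\mathcal{C}((a,e))=e$, that is, $\mathcal{C}((a,e))=\mathcal{C}((e,b))^{-1}$. Consequently each of $\hB((a,b))=(a,e)\mathcal{C}((a,e))^{-1}$ and $\wtd{\hB}((a,b))=(e,b)\mathcal{C}((e,b))^{-1}$ is a coordinate embedding corrected by a single element of $\im\mathcal{C}$, the two correcting elements being mutually inverse. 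These identities are the substitute for the additive decoupling of the two coordinates available in the Lie algebra setting.

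The heart of the argument is then the direct check of \eqref{RBLS} for $\hB$. Given $g,h\in G$, I would expand $\hB(g)\hB(h)$ by applying the Rota-Baxter identity of $\wtd{\mathcal{C}}$ to the first-coordinate representatives $(a,e)$ and $(a',e)$, and separately expand $\hB\bigl(\Ad_{\hB(g)}(h)\,g\bigr)$, reducing both sides to $\wtd{\mathcal{C}}$ evaluated on products in $\Gamma$. Reconciling the two expansions is where conditions (a) and (b) of Definition \ref{MPG}, idempotency of $\mathcal{C}$, and the relation $\mathcal{C}((a,e))=\mathcal{C}((e,b))^{-1}$ all enter: the correcting factors must be transported through the conjugations using the matched-pair compatibility, so that the expression produced from the full elements matches the one produced from the coordinate representatives. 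The verification for $\wtd{\hB}$ is symmetric under interchanging $G_+$ with $G_-$ and $\rho$ with $\mu$.

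The main obstacle, and the genuine difference from the Lie algebra case, is exactly this matching step. In Proposition \ref{MR} the complement $\wtd{C}=\id-C$ is linear, so the two coordinates split additively; here $\wtd{\mathcal{C}}$ is only a Rota-Baxter operator and not a group homomorphism, while the bicrossed product multiplication entangles the two coordinates through $\rho$ and $\mu$. Thus one cannot peel $\hB$ off coordinatewise, and the bulk of the work is the nonabelian bookkeeping required to track the conjugations and the correcting factor $\mathcal{C}((a,e))\in\im\mathcal{C}$ while pushing everything through $\wtd{\mathcal{C}}$. I expect this to be the only delicate point; once \eqref{RBLS} is established for both $\hB$ and $\wtd{\hB}$, the proposition follows.
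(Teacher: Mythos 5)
Your preparatory analysis is sound and matches the paper's intent: the paper's entire proof of Proposition \ref{GS} is the single sentence that it is similar to the proof of Proposition \ref{MR}, and your steps are the correct multiplicative translations of that proof's ingredients. In particular, you rightly repair the garbled statement by reading the operators through $\wtd{\mathcal{C}}(g)=g\,\mathcal{C}(g^{-1})$ of Propositions \ref{GPMG} and \ref{G-1} (the literal formula $\hB((a,b))=\mathcal{C}((a,e))$ would not even map $G=\ker\mathcal{C}$ into itself); the computation $\mathcal{C}(\wtd{\mathcal{C}}(g))=\mathcal{C}(g)\mathcal{C}(g)^{-1}=e$ correctly settles well-definedness; and the identity $(a,b)=(e,b)\cdot_{\bowtie}(a,e)$ with its consequence $\mathcal{C}((a,e))=\mathcal{C}((e,b))^{-1}$ is exactly the multiplicative analogue of the relation $C((x,0))=-C((0,u))$ that drives the proof of Proposition \ref{MR}.

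The genuine gap is that the verification of \eqref{RBLS} --- the sole substantive claim of the proposition --- is never carried out: you describe what you \emph{would} expand and where Definition \ref{MPG} \emph{would} enter, and then assert optimism. In Proposition \ref{MR} that verification is precisely where all the work sits, and here it is strictly harder, for the reason your own last paragraph identifies but does not resolve. To see concretely what is missing, write $\alpha((a,b))=(a,e)$; since $\mathcal{C}(\hB(g))=e$ one gets $\hB(g)\hB(h)=\wtd{\mathcal{C}}\bigl(\hB(g)\,\alpha(h)\bigr)$, and since idempotency gives $\wtd{\mathcal{C}}(x)=\wtd{\mathcal{C}}(y)$ if and only if $y^{-1}x\in\im\mathcal{C}$, identity \eqref{RBLS} for $\hB$ is equivalent to
\begin{equation*}
\alpha\bigl(\Ad_{\hB(g)}(h)\,g\bigr)\ \in\ \hB(g)\,\alpha(h)\,\im\mathcal{C},\qquad \forall\, g,h\in\ker\mathcal{C}.
\end{equation*}
Establishing this display requires actually computing the first coordinate of $\Ad_{\hB(g)}(h)\,g$, i.e.\ pushing the correcting factors $\mathcal{C}((a,e))\in\im\mathcal{C}$ and the second-coordinate parts through the bicrossed multiplication via $\rho$ and $\mu$, and showing the accumulated $\im\mathcal{C}$-discrepancy cancels --- a computation of the same kind and length as those in the proof of Theorem \ref{TMG}, for which your proposal supplies no mechanism. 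Two features make it less routine than your outline suggests: unlike in Proposition \ref{MR}, there is no complementarity $B+\wtd{B}=\id$ to rearrange with (the correct relation is the twisted $(a,b)=\wtd{\hB}((a,b))\cdot\Ad_{\mathcal{C}((a,e))^{-1}}\bigl(\hB((a,b))\bigr)$, with a point-dependent conjugation, and Proposition \ref{GS}, unlike \ref{MR}, asserts no such relation), so the first step of \ref{MR}'s proof has no direct analogue; and the $\wtd{\hB}$ case is not a literal mirror of the $\hB$ case, because the factorization $(a,b)=(e,b)\cdot_{\bowtie}(a,e)$ has a fixed chirality and the reversed factorization twists both coordinates through the actions. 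In short: right approach and correct scaffolding, but the proof stops exactly where the mathematics begins.
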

\begin{proof}
Denote the identity element of $G_-$ by $e'$. For any $(a_1,b_1),(a_2,b_2)\in G,$ we have
\begin{equation*}
\begin{aligned}
&\hB\left(\hB((a_1,b_1))(a_2,b_2)\hB((a_1,b_1))^{-1})(a_1,b_1)  \right)\\
=&\hB\left( \hC((e,b_1))(a_2,b_2)\hC((e,b_1))^{-1}(a_1,b_1)\right)\\
=&\hB\left( \hC((e,b_1))\hC((a_2,b_2)\hC((e,b_1)^{-1}) )\hC((a_1,b_1))\right)\ (\textit{as $\hC$ is idempotent})\\
=&\hB\circ \hC\left((e,b_1)(a_2,b_2)(e,b_1)^{-1} (a_1,b_1)\right)\\
=&\hB\circ \hC\left((e,b_1)(a_2,b_2)(e,b_1)^{-1} (e,b_1)(a_1,e')\right)\\
=&\hB\circ \hC\left((e,b_1)(a_2,b_2)(a_1,e')\right).
\end{aligned}
\end{equation*}
By the proof of Proposition \ref{GPMG}, we have
$$
\begin{aligned}
(e,b_1)(a_2,b_2) (a_1,e')
=&\hC\left((e,b_1)(a_2,b_2) (a_1,e')\right)\wtd{\hC}\left((e,b_1)(a_2,b_2) (a_1,e')\right)\\
=&\hC\left((e,b_1)(a_2,b_2) (a_1,e')\right)\wtd{\hC}\left((e,b_1)\right)\wtd{\hC}\left((a_2,b_2)\right)\wtd{\hC}\left((a_1,e')\right)\\
=&\hC\left((e,b_1)(a_2,b_2) (a_1,e')\right)\wtd{\hC}\left((e,b_1)\right)(e,e')\wtd{\hC}\left((a_1,e')\right)\\
=&\hC\left((e,b_1)(a_2,b_2) (a_1,e')\right)\wtd{\hC}(a_1,b_1)\\
=&\hC\left((e,b_1)(a_2,b_2) (a_1,e')\right).
\end{aligned}
$$
This shows that $(e,b_1)(a_2,b_2) (a_1,e')\in G.$
It follows that
\begin{equation*}
\begin{aligned}
&\hB\left(\hB((a_1,b_1))(a_2,b_2)\hB((a_1,b_1))^{-1})(a_1,b_1)  \right)\\
=&\hB\left((e,b_1)(a_2,b_2) (a_1,e')\right)\\
=&\hB\left((e,b_1)(e,b_2)(a_2,e')(a_1,e')\right)\\
=&\hB\left((e,b_1b_2)(a_2a_1,e')\right).
\end{aligned}
\end{equation*}
And then we have
\begin{equation*}
\begin{aligned}
&\hB\left(\hB((a_1,b_1))(a_2,b_2)\hB((a_1,b_1))^{-1})(a_1,b_1)  \right)\\
=&\hB((a_2a_1,b_1b_2))
=\hC\left((e,b_1b_2)\right)\\
=&\hC((e,b_1))\hC((e,b_2))=\hB((a_1,b_2))\hB((a_2,b_2)).
\end{aligned}
\end{equation*}
This proves that $\hB$ is a Rota-Baxter operator of weight $-1$ on $G$. Finally, by Proposition \ref{G-1}, we know that $\wtd{\hB}$ is a Rota-Baxter operator of weight $-1$ on $G$. 
\end{proof}

The next two lemmas will be used in the proof of Theorem \ref{TMG}.
\begin{lem}\label{DAG}
Let $(G,\hB)$ be a Rota-Baxter group of weight $-1.$ Then for any $a\in G$, $\hB(a^{\dagger})=\hB(a)^{-1}.$ 
\end{lem}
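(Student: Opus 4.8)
The plan is to read the identity straight off the defining relation \eqref{RBLS} by feeding it the element $a^{\dagger}$ in the second slot. Applying \eqref{RBLS} to the pair $(a,a^{\dagger})$ gives $\hB(a)\hB(a^{\dagger})=\hB\bigl(\Ad_{\hB(a)}(a^{\dagger})\,a\bigr)$, so everything reduces to understanding the single element $\Ad_{\hB(a)}(a^{\dagger})\,a$ sitting inside $\hB$ on the right. If I can show this element equals the identity $e$, then Lemma \ref{ID} yields $\hB(a)\hB(a^{\dagger})=\hB(e)=e$, and the statement follows at once. No use of the weight, of $\wtd{\hB}$, or of Proposition \ref{G-1} is needed beyond \eqref{RBLS} and $\hB(e)=e$.

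The heart of the argument is therefore the purely group-theoretic claim that $\Ad_{\hB(a)}(a^{\dagger})=a^{-1}$, equivalently $\Ad_{\hB(a)}(a^{\dagger})\,a=e$. This is exactly what $a^{\dagger}$ is designed to achieve: unwinding its definition and using that $\Ad_{\hB(a)}$ is an automorphism, the conjugating factors $\hB(a)^{\pm1}$ telescope and the leftover $a^{-1}$ cancels the trailing $a$. I expect this cancellation to be the only genuine computation in the proof, and it is routine conjugation bookkeeping once the factors are lined up correctly.

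Finally, from $\hB(a)\hB(a^{\dagger})=e$ I conclude $\hB(a^{\dagger})=\hB(a)^{-1}$, since a one-sided inverse in a group is automatically two-sided. Conceptually, the lemma just says that $\hB$ carries inverses to inverses for the descendent structure: relation \eqref{RBLS} is precisely the assertion that $\hB$ intertwines the descendent multiplication $a\cdot_{\hB}b=\Ad_{\hB(a)}(b)\,a$ with the multiplication of $G$, and $a^{\dagger}$ is the $\cdot_{\hB}$-inverse of $a$, so the displayed identity is this intertwining applied to $a\cdot_{\hB}a^{\dagger}=e$. The one point I would verify most carefully is the placement of $\hB(a)$ versus $\hB(a)^{-1}$ in the conjugation defining $a^{\dagger}$: it is exactly here, in matching $\Ad_{\hB(a)}(a^{\dagger})$ against $a^{-1}$, that an orientation error would be fatal, so pinning down that orientation is the main obstacle.
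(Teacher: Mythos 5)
Your proposal is correct and is essentially the paper's proof: apply \eqref{RBLS} to the pair $(a,a^{\dagger})$, observe that the argument of $\hB$ on the right-hand side collapses to $e$, and invoke Lemma \ref{ID} to get $\hB(a)\hB(a^{\dagger})=e$, hence $\hB(a^{\dagger})=\hB(a)^{-1}$. Your closing caveat about the orientation of the conjugation is exactly the right thing to flag: the telescoping $\Ad_{\hB(a)}(a^{\dagger})\,a=e$ works only when $a^{\dagger}$ is read as the $\cdot_{\hB}$-inverse $\hB(a)^{-1}a^{-1}\hB(a)=\Ad_{\hB(a)^{-1}}(a^{-1})$, which is what the paper's displayed computation actually uses, even though its stated definition $a^{\dagger}=\Ad_{\hB(a)}a^{-1}$ has the conjugation inverted (with that literal reading one would get $\hB(a)^{2}a^{-1}\hB(a)^{-2}a\neq e$ in general).
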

\begin{proof}
For any $a\in G,$ by \eqref{RBLS},   
\begin{equation*}
\begin{aligned}
\hB(a)\hB(a^{\dagger})=\hB(\hB(a)\hB(a)^{-1}a^{-1}\hB(a)\hB(a)^{-1}a)=\hB(e).
\end{aligned}
\end{equation*}
It follows from Lemma \ref{ID} that $\hB(e)=e$. Therefore $\hB(a)\hB(a^{\dagger})=e$ and it follows that $\hB(a^{\dagger})=\hB(a)^{-1}.$
\end{proof}

By the definition of $\wtd{\hB}$, it is straightforward to derive the following lemma. It will be used in the proof of Proposition \ref{C3} and Theorem \ref{TMG}.
\begin{lem}\label{IV}
Let $(G,\hB)$ be a Rota-Baxter group of weight $-1.$ Then 
for any $a\in G,$ $$\hB(a)\wtd{\hB}(a^{-1})^{-1}=a. $$
\end{lem}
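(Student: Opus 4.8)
The plan is to reduce the claimed identity to the defining relation of $\wtd{\hB}$, after which it becomes a one-line group-theoretic cancellation. First I would recall from Proposition \ref{G-1} that $\wtd{\hB}(c)=c\hB(c^{-1})$ for every $c\in G$. Applying this with $c=a^{-1}$ gives $\wtd{\hB}(a^{-1})=a^{-1}\hB(a)$, and hence, taking inverses, $\wtd{\hB}(a^{-1})^{-1}=\hB(a)^{-1}a$. Substituting this into the left-hand side of the asserted equation then yields $\hB(a)\wtd{\hB}(a^{-1})^{-1}=\hB(a)\hB(a)^{-1}a=a$, which is exactly the claim.

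Alternatively, I would observe that the identity is nothing more than a rearrangement of \eqref{HBWB}, which already records $\hB(a)=a\wtd{\hB}(a^{-1})$: multiplying both sides on the right by $\wtd{\hB}(a^{-1})^{-1}$ cancels the last factor and produces $\hB(a)\wtd{\hB}(a^{-1})^{-1}=a$ immediately. Either route avoids using the Rota-Baxter relation \eqref{RBLS} directly, since the interplay between $\hB$ and $\wtd{\hB}$ has already been encoded in Proposition \ref{G-1} and in \eqref{HBWB}.

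I do not expect any genuine obstacle here; the statement is a purely formal bookkeeping identity linking $\hB$ and $\wtd{\hB}$, and both approaches collapse to a single cancellation. The only point that warrants minimal care is correctly computing the inverse $(a^{-1}\hB(a))^{-1}=\hB(a)^{-1}a$ (reversing the order of the factors), after which the $\hB(a)\hB(a)^{-1}$ pair cancels and leaves $a$.
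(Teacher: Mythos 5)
Your proof is correct and matches the paper's intent exactly: the paper dismisses this lemma with ``the proof follows from the definition,'' and your computation $\wtd{\hB}(a^{-1})=a^{-1}\hB(a)$, hence $\hB(a)\wtd{\hB}(a^{-1})^{-1}=\hB(a)\hB(a)^{-1}a=a$, is precisely the one-line cancellation being alluded to. Your alternative route via \eqref{HBWB} is the same identity read in the other direction, so there is no substantive difference from the paper's approach.
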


The next three lemmas will be used in the proof of Proposition \ref{C3} and Theorem \ref{TMG}.

\begin{lem}\label{DEC}
Let $(G,\hB)$ be a Rota-Baxter group of weight $-1.$ Then for any $a,b\in G,$
$a\cdot_{\hB}b=(a^{-1}\cdot_{\hB} b^{-1})^{-1}$
\end{lem}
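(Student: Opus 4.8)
The plan is to prove the identity by a single direct computation inside $G$, writing everything in terms of the defining word of the descendent product and then converting between $\hB$ and $\wtd{\hB}$ via Proposition \ref{G-1}. First I would expand the inner product on the right-hand side using the definition of $\cdot_{\hB}$, getting $a^{-1}\cdot_{\hB}b^{-1}=\Ad_{\hB(a^{-1})}(b^{-1})a^{-1}=\hB(a^{-1})\,b^{-1}\,\hB(a^{-1})^{-1}\,a^{-1}$. Taking the group inverse in $G$ reverses this four-letter word and inverts each letter, so that $(a^{-1}\cdot_{\hB}b^{-1})^{-1}=a\,\hB(a^{-1})\,b\,\hB(a^{-1})^{-1}$. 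At this stage the computation is pure group algebra and requires no Rota-Baxter hypothesis.

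The second step is to rewrite the two outer factors through $\wtd{\hB}$. By Proposition \ref{G-1} we have $\wtd{\hB}(a)=a\hB(a^{-1})$, so the leftmost pair $a\hB(a^{-1})$ collapses to $\wtd{\hB}(a)$; correspondingly $\hB(a^{-1})^{-1}=\wtd{\hB}(a)^{-1}a$, which is exactly the content of Lemma \ref{IV}. Substituting both identities gives $(a^{-1}\cdot_{\hB}b^{-1})^{-1}=\wtd{\hB}(a)\,b\,\wtd{\hB}(a)^{-1}\,a=\Ad_{\wtd{\hB}(a)}(b)\,a$, which has precisely the shape of a descendent product $\Ad_{(-)}(b)\,a$, with conjugating element $\wtd{\hB}(a)$.

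It then remains to match this with the left-hand side $a\cdot_{\hB}b=\Ad_{\hB(a)}(b)\,a$. The relevant tools here are the symmetry between the two operators recorded in \eqref{HBWB}, namely $\hB(a)=a\wtd{\hB}(a^{-1})$, together with the involutivity $\wtd{\wtd{\hB}}=\hB$, so that running the same expand-and-invert argument for $\wtd{\hB}$ feeds back into $\hB$. I expect this final reconciliation of the two conjugating factors to be the main obstacle: the expansions and the inversion are routine, whereas correctly tracking which of $\hB$ and $\wtd{\hB}$ must appear after inverting — and invoking the $\hB$--$\wtd{\hB}$ duality at the right moment — is where the genuine content of the statement is concentrated.
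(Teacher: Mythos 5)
Your first two steps are correct, and in fact they already constitute the paper's entire proof, run in reverse: expanding and inverting gives
$(a^{-1}\cdot_{\hB}b^{-1})^{-1}=a\,\hB(a^{-1})\,b\,\hB(a^{-1})^{-1}=\Ad_{\wtd{\hB}(a)}(b)\,a=a\cdot_{\wtd{\hB}}b$.
The genuine gap is your third step: the ``final reconciliation'' of $\Ad_{\wtd{\hB}(a)}$ with $\Ad_{\hB(a)}$ cannot be carried out, by any use of the $\hB$--$\wtd{\hB}$ duality or of $\wtd{\wtd{\hB}}=\hB$, because the two sides differ by conjugation of $b$ by $\hB(a)^{-1}\wtd{\hB}(a)=\hB(a)^{-1}a\hB(a^{-1})$, which is not central in general. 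Indeed, the lemma as printed is false: take $\hB\equiv e$ on a nonabelian group $G$, which is a Rota-Baxter operator of weight $-1$ since $\hB(a)\hB(b)=e=\hB((\Ad_{\hB(a)}(b))a)$; then $a\cdot_{\hB}b=ba$ while $(a^{-1}\cdot_{\hB}b^{-1})^{-1}=(b^{-1}a^{-1})^{-1}=ab$.

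What is true --- and what the paper's own displayed computation actually establishes (it manipulates $a\cdot_{\wtd{\hB}}b$ throughout; the subscript $\hB$ in the statement and the subscript $\wtd{\hB}$ in its last line are typos) --- is the mixed identity $a\cdot_{\wtd{\hB}}b=(a^{-1}\cdot_{\hB}b^{-1})^{-1}$, equivalently $a\cdot_{\hB}b=(a^{-1}\cdot_{\wtd{\hB}}b^{-1})^{-1}$. This mixed form is also the one the paper needs later, in the final theorem of Section \ref{4}, where the step $\wtd{\hB}(a^{-1})\wtd{\hB}(b^{-1})=\wtd{\hB}(a^{-1}\cdot_{\wtd{\hB}}b^{-1})=\wtd{\hB}((a\cdot_{\hB}b)^{-1})$ is invoked. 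So your computation through step two is a complete and correct proof of the corrected statement; the right move is to stop there and flag the misprint, rather than to hunt for a reconciliation of the conjugating factors, which is where the proposal would fail.
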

\begin{proof}
For any $a,b\in G,$ we have 
\begin{equation*}
\begin{aligned}
a\cdot_{\wtd{\hB}}b&=a\hB(a^{-1})b\hB(a^{-1})^{-1}a^{-1}a\\
&=a\hB(a^{-1})b\hB(a^{-1})^{-1}\\
&=(\hB(a^{-1})b^{-1}\hB(a^{-1})^{-1}a^{-1})^{-1}\\
&=(a^{-1}\cdot_{\wtd{\hB}}b)^{-1}.
\end{aligned}
\end{equation*}
\end{proof}
\begin{lem}\label{IS}
Let $(G,\hB)$ be a Rota-Baxter group of weight $-1.$ Then the following identity holds:
$$\wtd{\hB}(\hB(a))=\hB(\wtd{\hB}(a^{-1})^{-1}), \ \forall a\in G.$$
\end{lem}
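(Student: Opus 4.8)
The plan is to reduce both sides of the asserted identity to the common expression $\hB(a)\hB(\hB(a)^{-1})$, using only the definition of $\wtd{\hB}$, the Rota-Baxter relation \eqref{RBLS}, and Lemma \ref{IV}. First I would expand the left-hand side directly from the defining formula $\wtd{\hB}(x)=x\hB(x^{-1})$ of Proposition \ref{G-1}, applied to the element $x=\hB(a)$. This gives $\wtd{\hB}(\hB(a))=\hB(a)\,\hB(\hB(a)^{-1})$, which isolates the factor $\hB(\hB(a)^{-1})$ that will be the bridge between the two sides.

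Next I would simplify the argument appearing on the right-hand side. By Lemma \ref{IV} we have $\hB(a)\,\wtd{\hB}(a^{-1})^{-1}=a$, hence $\wtd{\hB}(a^{-1})^{-1}=\hB(a)^{-1}a$, and therefore $\hB(\wtd{\hB}(a^{-1})^{-1})=\hB(\hB(a)^{-1}a)$. Thus the whole problem is reduced to verifying the single identity $\hB(a)\,\hB(\hB(a)^{-1})=\hB(\hB(a)^{-1}a)$.

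This last identity is exactly an instance of the Rota-Baxter relation \eqref{RBLS}, $\hB(x)\hB(y)=\hB((\Ad_{\hB(x)}(y))x)$, with the substitution $x=a$ and $y=\hB(a)^{-1}$. The key computation is that $\Ad_{\hB(a)}(\hB(a)^{-1})=\hB(a)\hB(a)^{-1}\hB(a)^{-1}=\hB(a)^{-1}$, so that $(\Ad_{\hB(a)}(\hB(a)^{-1}))\,a=\hB(a)^{-1}a$ and the right-hand side of \eqref{RBLS} becomes precisely $\hB(\hB(a)^{-1}a)$. Combining the three steps gives $\wtd{\hB}(\hB(a))=\hB(a)\hB(\hB(a)^{-1})=\hB(\hB(a)^{-1}a)=\hB(\wtd{\hB}(a^{-1})^{-1})$, as desired.

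I do not expect any serious obstacle here: the only non-mechanical point is recognizing the correct substitution $y=\hB(a)^{-1}$ in \eqref{RBLS}, which is signposted by the appearance of $\hB(\hB(a)^{-1})$ after expanding the left-hand side and by the form $\hB(a)^{-1}a$ produced by Lemma \ref{IV}. Everything else is a direct application of the definitions already established in the section.
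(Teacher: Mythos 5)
Your proposal is correct, and it is precisely the verification the paper leaves implicit when it says the proof ``follows from the definition'': expanding $\wtd{\hB}(\hB(a))=\hB(a)\hB(\hB(a)^{-1})$ from Proposition \ref{G-1}, rewriting $\wtd{\hB}(a^{-1})^{-1}=\hB(a)^{-1}a$ (Lemma \ref{IV}), and applying \eqref{RBLS} with $y=\hB(a)^{-1}$, where $\Ad_{\hB(a)}(\hB(a)^{-1})=\hB(a)^{-1}$, all check out. No gaps; your write-up actually supplies more detail than the paper itself.
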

\begin{proof}
The proof follows from the definition.
\end{proof}

\begin{lem}\label{TE2}
Let $(G,\hB)$ be a Rota-Baxter group of weight $-1$ and $(G_+,G_-,\rho,\mu)$ be the matched pair of groups on $(G,\hB).$ Then the following identity holds:
\begin{equation*}
\left(\hB\left( \wtd{\hB}(a)\right)^{-1},
\wtd{\hB}\left(\hB(a^{-1})^{-1}   \right)
\right)\left(\hB\left( \wtd{\hB}(b)\right)^{-1},
\wtd{\hB}\left(\hB(b^{-1})^{-1}\right)
\right)=\left( \hB(\wtd{\hB}(a)),\wtd{\hB}(\hB(b^{-1})^{-1})  \right)
\end{equation*}
for any $a,b\in G.$ 
\end{lem}

\begin{proof}
For any $a,b\in G,$ we have
\begin{equation*}
\begin{aligned}
&\left(\hB\left( \wtd{\hB}(a)\right)^{-1},
\wtd{\hB}\left(\hB(a^{-1})^{-1}   \right)
\right)\left(\hB\left( \wtd{\hB}(b)\right)^{-1},
\wtd{\hB}\left(\hB(b^{-1})^{-1}\right)
\right)\\
=&\left(\left( \hB\left(\wtd{\hB}\left(\hB(b^{-1})\right)^{-1} \wtd{\hB}(a)\wtd{\hB}\left(\hB(b^{-1})\right)\right)\right)^{-1}   \hB\left(\wtd{\hB}(b)
\right)^{-1},\wtd{\hB}\left(\hB(a^{-1})^{-1}\right)\wtd{\hB}\left(\hB(\wtd{\hB}(a))^{-1}\hB(b^{-1})^{-1} \hB(\wtd{\hB}(a))\right)     
\right)\\
=&\left(\hB\left(\wtd{\hB}(b)
\right)\left( \hB\left(\wtd{\hB}\left(\hB(b^{-1})^{-1}\right)^{-1} \wtd{\hB}(a)\wtd{\hB}\left(\hB(b^{-1})^{-1}\right)\right)   \right)^{-1},\wtd{\hB}\left(\hB(a^{-1})^{-1}\right)\wtd{\hB}\left(\hB(\wtd{\hB}(a))^{-1}\hB(b^{-1})^{-1} \hB(\wtd{\hB}(a))\right)     
\right)\\
=&\left( \hB(\wtd{\hB}(a)\wtd{\hB}(b))^{-1},\wtd{\hB}(\hB(b^{-1})^{-1}\hB(a^{-1})^{-1})  \right)\ (\textit{by \eqref{RBLS} and Lemma \ref{IV}}).
\end{aligned}
\end{equation*}
\end{proof}

The next proposition will be used in the proof of Theorem \ref{TMG}.

\begin{pro}\label{C3}
Let $(G,\hB)$ be a Rota-Baxter group of weight $-1$, and let $(G_+,G_-,\rho,\mu)$ be the matched pair of groups on $(G,\hB).$ Then the following identity holds:
$$\left(\hB(a)^{-1},\wtd{\hB}(a^{-1})\right) \left(\hB(\wtd{\hB}(b))^{-1}, \wtd{\hB}(\hB(b^{-1})^{-1})\right)= \left(\hB(\wtd{\hB}(b))^{-1}, \wtd{\hB}(\hB(b^{-1})^{-1})\right)\left(\hB(a)^{-1},\wtd{\hB}(a^{-1})\right),$$
where $a,b\in G.$
\end{pro}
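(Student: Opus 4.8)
The plan is to verify the identity by a direct computation in the bicrossed product $G_+\bowtie G_-$, splitting the single group equation into two equations in $G$ (one for each factor) and then collapsing each with the Rota-Baxter relation. Write $P=(p_1,p_2)$ and $Q=(q_1,q_2)$ with $p_1=\hB(a)^{-1}$, $p_2=\wtd{\hB}(a^{-1})$, $q_1=\hB(\wtd{\hB}(b))^{-1}$ and $q_2=\wtd{\hB}(\hB(b^{-1})^{-1})$. Applying the multiplication rule $(a_1,b_1)\cdot_{\bowtie}(a_2,b_2)=((\mu(b_2^{-1})a_1^{-1})^{-1}a_2,\,b_1(\rho(a_1)b_2))$ to both $P\cdot_{\bowtie}Q$ and $Q\cdot_{\bowtie}P$, the asserted equality becomes the $G_+$-component identity $(\mu(q_2^{-1})p_1^{-1})^{-1}q_1=(\mu(p_2^{-1})q_1^{-1})^{-1}p_1$ together with the $G_-$-component identity $p_2(\rho(p_1)q_2)=q_2(\rho(q_1)p_2)$.

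For the $G_-$-component, I would first put the left entries into image form: by Lemma \ref{DAG}, $p_1=\hB(a)^{-1}=\hB(a^{\dagger})$ and $q_1=\hB(\wtd{\hB}(b))^{-1}=\hB((\wtd{\hB}(b))^{\dagger})$, so that \eqref{MG1} is applicable and yields $\rho(p_1)q_2=\wtd{\hB}(\Ad_{\hB(a)^{-1}}(\hB(b^{-1})^{-1}))$ and $\rho(q_1)p_2=\wtd{\hB}(\Ad_{\hB(\wtd{\hB}(b))^{-1}}(a^{-1}))$. Using the Rota-Baxter relation \eqref{RBLS} for $\wtd{\hB}$ (which is a Rota-Baxter operator by Proposition \ref{G-1}), each of $p_2(\rho(p_1)q_2)$ and $q_2(\rho(q_1)p_2)$ merges into a single value of $\wtd{\hB}$, so the identity reduces to the equality in $G$ of the two resulting arguments. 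The $G_+$-component is treated symmetrically, using \eqref{RBLS} for $\hB$ and the explicit descriptions of $\mu$ obtained in the proof of Theorem \ref{MGRB}.

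The main obstacle is that the bicrossed multiplication twists the two factors through $\rho$ and $\mu$, so every component of $P\cdot_{\bowtie}Q$ and $Q\cdot_{\bowtie}P$ is a genuinely mixed word in $\hB$ and $\wtd{\hB}$, and the crux is to show these mixed words coincide after the $\wtd{\hB}$- and $\hB$-merges. The engine for the collapse is Lemma \ref{IV}, $\hB(c)\wtd{\hB}(c^{-1})^{-1}=c$, which is the group shadow of $B+\wtd{B}=\id_{\frkg}$ and allows one to trade $\hB$-terms for $\wtd{\hB}$-terms, together with Lemma \ref{IS} and Lemma \ref{DEC} to push inverses and daggers past the operators. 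Conceptually, $P$ is a diagonal ($\im\mathcal{C}$-type) element while $Q$ is an ($\im\wtd{\mathcal{C}}$-type) element of $G_+\bowtie G_-$, and the proposition is precisely the group counterpart of the Lie-algebra fact that the summands $\frkg_1$ and $\frkg_2$ commute; this is why the two products agree, but here the commutation must be established by hand, since the decomposition of Theorem \ref{TMG} is not yet at our disposal.
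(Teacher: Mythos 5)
Your proposal is correct and takes essentially the same route as the paper: the paper's proof likewise expands both bicrossed products componentwise via the explicit formulas for $\rho$ and $\mu$, merges the resulting $\hB$- and $\wtd{\hB}$-words using \eqref{RBLS}, and collapses the adjoint twists with Lemma \ref{IS} and Lemma \ref{IV}, showing both sides equal $\left(\hB(a\wtd{\hB}(b))^{-1},\wtd{\hB}(a^{-1}\hB(b^{-1})^{-1})\right)$. Your closing observation that the identity expresses the commutation of the $\im\hC$-type element with the $\im\wtd{\hC}$-type element is exactly how the paper deploys the proposition in the proof of Theorem \ref{TMG}.
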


\begin{proof}
For any $a,b\in G,$ we have 
\begin{equation*}
\begin{aligned}
&\left(\hB(a)^{-1},\wtd{\hB}(a^{-1})\right) \left(\hB(\wtd{\hB}(b))^{-1}, \wtd{\hB}(\hB(b^{-1})^{-1})\right)\\
=&\left(\hB\left(\Ad_{\wtd{\hB}(\hB(b^{-1})^{-1})^{-1}} a\right)^{-1}\hB\left(\wtd{\hB}(b)\right)^{-1}, \wtd{\hB}(a^{-1})\wtd{\hB}\left(\Ad_{\hB(a)^{-1}} \hB(b^{-1})^{-1}\right)  \right)\\
=&\left((\hB(\wtd{\hB}(b))\hB\left(\Ad_{\wtd{\hB}(\hB(b^{-1})^{-1})^{-1}} a)\right)^{-1}, \wtd{\hB}(a^{-1})\wtd{\hB}\left(\Ad_{\hB(a)^{-1}} \hB(b^{-1})^{-1}\right)  \right)\\
=&\left(\hB\left((\Ad_{\wtd{\hB}(\hB(b^{-1})^{-1})\wtd{\hB}(\hB(b^{-1})^{-1})^{-1}} a)\wtd{\hB}(b)\right)^{-1}, \wtd{\hB}\left((\Ad_{\wtd{\hB}(a^{-1})\hB(a)^{-1}} \hB(b^{-1})^{-1})a^{-1}\right)  \right)\ (\textit{by \eqref{RBLS}})\\
=&\left(\hB\left( a\wtd{\hB}(b))\right)^{-1}, \wtd{\hB}\left(a^{-1} \hB(b^{-1})^{-1}\right)  \right)\ (\textit{by \eqref{RBLS} and Lemma \ref{IV}}).\\
\end{aligned}
\end{equation*}
On the other hand, we have 
\begin{equation*}
\begin{aligned}
&\left(\hB(\wtd{\hB}(b))^{-1}, \wtd{\hB}(\hB(b^{-1})^{-1}\right)  \left(\hB(a)^{-1},\wtd{\hB}(a^{-1})\right) \\
=&\left( \hB\left(\Ad_{\wtd{\hB}(a^{-1})^{-1}} \wtd{\hB}(b)\right)^{-1}\hB(a)^{-1},   \wtd{\hB}(\hB(b^{-1})^{-1})\wtd{\hB}\left(\Ad_{\hB(\wtd{\hB}(b))^{-1}}a^{-1}\right) \right)\\
=&\left( (\hB(a)\hB\left(\Ad_{\wtd{\hB}(a^{-1})^{-1}} \wtd{\hB}(b))\right)^{-1},   \wtd{\hB}(\hB(b^{-1})^{-1})\wtd{\hB}\left(\Ad_{\hB(\wtd{\hB}(b))^{-1}}a^{-1}\right) \right) \ (\textit{by \eqref{RBLS}})   \\
=&\left( \hB\left(a\wtd{\hB}(b)\right)^{-1},   \wtd{\hB}\left(a^{-1} \hB(b^{-1})^{-1}\right) \right)\ (\textit{by \eqref{RBLS} and  Lemma \ref{IV}}).
\end{aligned}
\end{equation*}

\end{proof}

Now, we give the main result of this section. The next theorem is the group version of \ref{FL}.
\begin{thm}\label{TMG}
Let $(G,\hB)$ be a Rota-Baxter group of weight $-1$ and $(G_+,G_-,\rho,\mu)$ be the matched pair of groups on $(G,\hB).$ Then the operator $\hC:G_+\bowtie G_-\to G_+\bowtie G_-$ defined by 
\begin{equation*}
\hC\left(\left(\hB(a),\wtd{\hB}(b)\right)\right)=\left(\hB\left(\hB(a)^{-1}\wtd{\hB}(b)^{-1}\right)^{-1},\wtd{\hB}\left(\wtd{\hB}(b)\hB(a)\right)\right),\ \forall a,b\in G,
\end{equation*}
and the operator $\wtd{\hC}:G_+\bowtie G_-\to G_+\bowtie G_-$ defined by
\begin{equation*}
\wtd{\hC}\left(\left(\hB(a),\wtd{\hB}(b)\right)\right)=\left(\hB\left(\wtd{\hB}(b)\wtd{\hB}(a^{-1})\right)^{-1},\wtd{\hB}\left(\hB(a)^{-1}\hB(b^{-1})^{-1}\right)\right),\ \forall a,b\in G,
\end{equation*}
are group projections on $(G_+,G_-,\rho,\mu)$ such that $$\left(\hC\left(\hB(a),\wtd{\hB}(b)\right)\cdot_{\bowtie} \wtd{\hC}\left((\hB(a),\wtd{\hB}(b))\right) \right)=(\hB(a),\wtd{\hB}(b)).$$  
\end{thm}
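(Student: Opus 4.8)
The plan is to follow the template of Theorem~\ref{FL} and verify, in turn, that $\hC$ and $\wtd{\hC}$ are well defined, that each is a group homomorphism, that each is idempotent, and finally the factorization $\hC(g)\cdot_{\bowtie}\wtd{\hC}(g)=g$ for $g=(\hB(a),\wtd{\hB}(b))$. For well-definedness I first observe that the defining formula for $\hC$ is written entirely in terms of the two group elements $\hB(a)\in G_+$ and $\wtd{\hB}(b)\in G_-$, so $\hC$ is manifestly independent of the chosen representatives $a,b$; its first component lies in $G_+=\im\hB$ because $\im\hB$ is closed under inversion by Lemma~\ref{DAG}, and its second component lies in $G_-$. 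The formula for $\wtd{\hC}$, on the other hand, also involves $\wtd{\hB}(a^{-1})$ and $\hB(b^{-1})$, which are not determined by $\hB(a)$ and $\wtd{\hB}(b)$ alone, so here I would check independence of representatives directly, exactly as in Theorem~\ref{FL}: replace $a$ by an element of the same $\ker\hB$-coset and $b$ by an element of the same $\ker\wtd{\hB}$-coset and use Lemma~\ref{IV} together with the Rota--Baxter identity~\eqref{RBLS} to see the value is unchanged.

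For the homomorphism property I would expand $\hC$ (respectively $\wtd{\hC}$) of a product $(\hB(a_1),\wtd{\hB}(b_1))\cdot_{\bowtie}(\hB(a_2),\wtd{\hB}(b_2))$ using the bicrossed multiplication and the explicit action formulas~\eqref{MGE2} and~\eqref{MGE3}, and then compare with the $\cdot_{\bowtie}$-product of the two values. The decisive ingredient is Proposition~\ref{C3}: it is the group analogue of the fact that the two summands of $\frkg_+\bowtie\frkg_-$ commute (the role played by Lemma~\ref{BBW} in the proof of Theorem~\ref{FL}), and it supplies precisely the commutation relation needed to reorder the factors that appear after expanding $\hC$ of a product into the form $\hC(\cdot)\cdot_{\bowtie}\hC(\cdot)$. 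Having verified the homomorphism property for one of the two operators, the other follows by the same computation.

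The factorization $\hC(g)\cdot_{\bowtie}\wtd{\hC}(g)=(\hB(a),\wtd{\hB}(b))$ I would establish by a direct computation with the bicrossed multiplication, substituting the two factors and collapsing the result with Lemma~\ref{IV} (used in the form $\wtd{\hB}(a^{-1})=a^{-1}\hB(a)$ and its companion from~\eqref{HBWB}), Lemma~\ref{DAG}, and Lemma~\ref{IS}; the $\rho$- and $\mu$-terms should telescope so that the two components reduce to $\hB(a)$ and $\wtd{\hB}(b)$. Once this factorization and the homomorphism property are in place, idempotency comes almost for free, in the manner of~\eqref{CPP}: checking that $\hC\circ\wtd{\hC}$ and $\wtd{\hC}\circ\hC$ are the constant map onto the identity, and then applying the homomorphism $\hC$ to $g=\hC(g)\cdot_{\bowtie}\wtd{\hC}(g)$ yields $\hC(g)=\hC\bigl(\hC(g)\bigr)\cdot_{\bowtie}\hC\bigl(\wtd{\hC}(g)\bigr)=\hC^2(g)$, since $\hC\bigl(\wtd{\hC}(g)\bigr)=e$, with the symmetric argument for $\wtd{\hC}$.

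The main obstacle will be the non-commutativity of $\cdot_{\bowtie}$: in contrast to Theorem~\ref{FL}, where bilinearity of the bracket let one split and recombine terms additively, every manipulation here takes place in the group $G_+\bowtie G_-$ with its twisted multiplication and its $\rho$-, $\mu$- and $\Ad$-terms, so the whole difficulty lies in bookkeeping the inverses and the order of the factors. Proposition~\ref{C3} is exactly the device that tames this, providing the single commutation identity that allows the expanded products to recombine; the well-definedness check for $\wtd{\hC}$ is a second, more routine, source of friction, since its formula genuinely depends on representative-dependent data.
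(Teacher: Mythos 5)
Your overall skeleton (well-definedness, multiplicativity, idempotency, factorization) matches the paper's, and several of your observations are sound: that $\hC$ is manifestly representative-independent while $\wtd{\hC}$, whose formula involves $\wtd{\hB}(a^{-1})$ and $\hB(b^{-1})$, needs a genuine coset check (the paper is actually terser than you here), and that the factorization $\hC(g)\cdot_{\bowtie}\wtd{\hC}(g)=g$ is a direct computation with Lemma \ref{IV}, Lemma \ref{DAG} and \eqref{RBLS}. But there is a real gap in your homomorphism step: you assign Proposition \ref{C3} a job it cannot do. By \eqref{C2} a generic element of $\im\hC$ has the form $\left(\hB(c)^{-1},\wtd{\hB}(c^{-1})\right)$, and by \eqref{C1} a generic element of $\im\wtd{\hC}$ has the form $\left(\hB(\wtd{\hB}(d))^{-1},\wtd{\hB}(\hB(d^{-1})^{-1})\right)$; Proposition \ref{C3} says precisely that an element of the first type commutes with an element of the second type. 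It says nothing about reordering the factors that arise when you expand $\hC(g_1)\cdot_{\bowtie}\hC(g_2)$, since both of those factors are of the first type, and two elements of $\im\hC$ need not commute. In the paper, multiplicativity of $\hC$ is a self-contained direct expansion using only \eqref{RBLS}, Lemma \ref{IV} and Lemma \ref{DAG}; Proposition \ref{C3} enters only afterwards, to show that $\im\hC$ and $\im\wtd{\hC}$ commute elementwise, which together with the factorization is exactly how the paper concludes that $\wtd{\hC}$ is also a projection \emph{without} a second expansion. Your closing claim that ``the other follows by the same computation'' is therefore also off: $\wtd{\hC}$'s formula is not symmetric to $\hC$'s, and the paper's architecture deliberately avoids repeating the computation for it.

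If instead you intended the abstract route --- establish the factorization and the commutation first, then deduce both homomorphism properties simultaneously --- then more is missing than you acknowledge: from $g_1\cdot_{\bowtie}g_2=\hC(g_1)\hC(g_2)\,\wtd{\hC}(g_1)\wtd{\hC}(g_2)$ one can conclude $\hC(g_1 g_2)=\hC(g_1)\hC(g_2)$ only if the decomposition of an element as (element of $\im\hC$)$\,\cdot_{\bowtie}\,$(element of $\im\wtd{\hC}$) is unique and both images are closed under $\cdot_{\bowtie}$; neither triviality of $\im\hC\cap\im\wtd{\hC}$ nor closure appears in your plan, and closure of $\im\hC$ amounts to essentially the same computation as the direct multiplicativity check, so nothing is saved. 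A smaller ordering issue: your idempotency argument presupposes the homomorphism property together with $\hC\circ\wtd{\hC}\equiv(e,e)$, whereas the paper checks idempotency first and independently from Lemma \ref{IV} alone (with $c=\hB(a)^{-1}\wtd{\hB}(b)^{-1}$ one has $\hC(g)=(\hB(c)^{-1},\wtd{\hB}(c^{-1}))$, and applying $\hC$ again collapses to the same pair because $\hB(c)\wtd{\hB}(c^{-1})^{-1}=c$); your route is workable but circuitous. The fix is to restructure the middle of your argument along the paper's lines: prove $\hC$ multiplicative by direct expansion with \eqref{RBLS}, Lemma \ref{IV} and Lemma \ref{DAG}, and reserve Proposition \ref{C3} for transferring the projection property to $\wtd{\hC}$ via the factorization.
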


\begin{proof}
First, by definition, $\hC$ and $\wtd{\hC}$ are well defined. Using Lemma \ref{IV}, it is straightforward to check that they are idempotent. Then we prove that $\hC$ is a group homomorphism.

For any $a_1,a_2,b_1,b_2\in G,$ we have
\begin{equation*}
\begin{aligned}
&\hC\left((\hB(a_1),\wtd{\hB}(b_1))\right) \hC\left((\hB(a_2),\wtd{\hB}(b_2))\right)\\
=&\left(\hB\left(\hB(a_1)^{-1}\wtd{\hB}(b_1)^{-1}\right)^{-1},\wtd{\hB}\left(\wtd{\hB}(b_1)\hB(a_1)\right)\right) \left(\hB\left(\hB(a_2)^{-1}\wtd{\hB}(b_2)^{-1}\right)^{-1},\wtd{\hB}\left(\wtd{\hB}(b_2)\hB(a_2)\right)\right)\\
=&\Big{(} (\hB\left(\Ad_{\wtd{\hB}(\wtd{\hB}(b_2)\hB(a_2))^{-1}} \hB(a_1)^{-1}\wtd{\hB}(b_1)^{-1}\right)^{-1}\hB\left(\hB(a_2)^{-1}\wtd{\hB}(b_2)^{-1}\right)^{-1}, \wtd{\hB}\left(\wtd{\hB}(b_1) \hB(a_1)\right)\\ &\quad \wtd{\hB}\left(\Ad_{\hB(\hB(a_1)^{-1}\wtd{\hB}(b_1)^{-1})^{-1}}\wtd{\hB}(b_2)\hB(a_2)\right)\Big{)}\\
=&\Big{(}\left(\hB(\hB(a_2)^{-1}\wtd{\hB}(b_2)^{-1}\right)\hB\left(\Ad_{\wtd{\hB}(\wtd{\hB}(b_2)\hB(a_2))^{-1}} \hB(a_1)^{-1}\wtd{\hB}(b_1)^{-1}\right)^{-1}, \wtd{\hB}\left(\wtd{\hB}(b_1) \hB(a_1)\right)\\ &\quad\wtd{\hB}\left(\Ad_{\hB(\hB(a_1)^{-1}\wtd{\hB}(b_1)^{-1})^{-1}}\wtd{\hB}(b_2)\hB(a_2)\right) \Big{)}.\\
\end{aligned}
\end{equation*}
It follows that
\begin{equation*}
\begin{aligned}
&\hC\left((\hB(a_1),\wtd{\hB}(b_1))\right) \hC\left((\hB(a_2),\wtd{\hB}(b_2))\right)\\
=&\left(\hB\left((\Ad_{\hB(a_2))^{-1}\wtd{\hB}(b_2)^{-1}} \hB(a_1)^{-1}\wtd{\hB}(b_1)^{-1})\hB(a_2)^{-1}\wtd{\hB}(b_2)^{-1})^{-1}\right),\wtd{\hB}\left((\Ad_{\wtd{\hB}(b_1) \hB(a_1)}\wtd{\hB}(b_2)\hB(a_2))\wtd{\hB}(b_1) \hB(a_1) \right)\right)\\&(\textit{by \eqref{RBLS} and Lemma \ref{IV}})
\\=&\left( \hB\left(\hB(a_2)^{-1}\wtd{\hB}(b_2)^{-1}\hB(a_1)^{-1}\wtd{\hB}(b_1)^{-1}\right),\wtd{\hB}\left(\wtd{\hB}(b_1)\hB(a_1)\wtd{\hB}(b_2)\hB(a_2)\right)\right)
\end{aligned}
\end{equation*}

On the other hand, we have 
\begin{equation*}
\begin{aligned}
&\hC\left(\left(\hB(a_1),\wtd{\hB}(b_1)\right) \left(\hB(a_2),\wtd{\hB}(b_2)\right)\right)\\
=&\hC\left(\hB(\Ad_{\wtd{\hB}(b_2)^{-1}}a_1^{\dagger})^{-1}\hB(a_2), \wtd{\hB}(b_1)\wtd{\hB}(\Ad_{\hB(a_1)} b_2) \right)\ (\textit{by Lemma \ref{DAG}})\\
=&\Big{(}\left(\hB\left(\wtd{\hB}(b_1)\wtd{\hB}(\Ad_{\hB(a_1)} b_2)\hB(\Ad_{\wtd{\hB}(b_2)^{-1}}a_1^{\dagger})^{-1}\hB(a_2)\right)^{-1}\right)^{-1}, \wtd{\hB}\left(\wtd{\hB}(b_1)\wtd{\hB}(\Ad_{\hB(a_1)} b_2)\hB(\Ad_{\wtd{\hB}(b_2)^{-1}}a_1^{\dagger})^{-1}\hB(a_2)\right)\Big{)}\\
=&\left( \hB\left((\wtd{\hB}(b_1)\hB(a_1)\wtd{\hB}(b_2)\hB(a_2))^{-1}\right)^{-1},\wtd{\hB}\left(\hB(\wtd{\hB}(b_1)\hB(a_1)\wtd{\hB}(b_2)\hB(a_2) )\right)\right)\ (\textit{by \eqref{MGE4}}).
\end{aligned}
\end{equation*}
It follows that $$\hC\left((\hB(a_1),\wtd{\hB}(b_1))(\hB(a_2),\wtd{\hB}(b_2))\right)=\hC\left((\hB(a_1),\wtd{\hB}(b_1))\right)\hC\left((\hB(a_2),\wtd{\hB}(b_2))\right).$$

Next, we prove that $\wtd{\hC}$ is an idempotent group homomorphism. For any $a,b\in G,$ we have
\begin{equation}\label{C1}
\begin{aligned}
\wtd{\hC}\left((\hB(a),\wtd{\hB}(b))\right)&=\left(\hB\left(\wtd{\hB}(b)\wtd{\hB}(a^{-1})\right)^{-1},\wtd{\hB}\left(\hB(a)^{-1}\hB(b^{-1})^{-1}\right)\right)\\
&=\left(\hB\left(\wtd{\hB}(b\cdot_{\wtd{\hB}} a^{-1})\right)^{-1},\wtd{\hB}\left(\hB(b^{-1}\cdot_{\hB} a)^{-1}\right)  \right)\ (\textit{by \eqref{RBLS}})\\
&=\left(\hB\left(\wtd{\hB}(b\cdot_{\wtd{\hB}} a^{-1})\right)^{-1},\wtd{\hB}\left(\hB((b\cdot_{\wtd{\hB}} a^{-1})^{-1})^{-1}\right)  \right)\ (\textit{by Lemma \ref{IS} }).
\end{aligned}
\end{equation}

Next, we have
\begin{equation}\label{C2}
\begin{aligned}
\hC\left((\hB(a),\wtd{\hB}(b))\right)=&\left(\hB(\hB(a)^{-1}\wtd{\hB}(b)^{-1})^{-1},\wtd{\hB}\left(\wtd{\hB}(b)\hB(a)\right)\right)\\
=&\left(\hB\left(\hB(a)^{-1}\wtd{\hB}(b)^{-1}\right)^{-1},\wtd{\hB}\left((\hB(a)^{-1}\wtd{\hB}(b)^{-1})^{-1}\right)\right).
\end{aligned}
\end{equation}
Then it follows from the proof Proposition \ref{C3}, \eqref{C1} and \eqref{C2} that
$$\hC\left(\hB(a),\wtd{\hB}(b)\right) \wtd{\hC}\left((\hB(a),\wtd{\hB}(b))\right)=\wtd{\hC}\left((\hB(a),\wtd{\hB}(b))\right)\hC\left(\hB(a),\wtd{\hB}(b)\right) =(\hB(a),\wtd{\hB}(b)).$$ Finally, by Proposition \ref{GMEQ} and Lemma \ref{TE2}, we obtain that $\wtd{\hC}$ is an idempotent group homomorphism and $\hC$ is a group projection on $(G_+,G_-,\rho,\mu)$.
\end{proof}

Here is the group version of Corollary \ref{RBIS}.
\begin{cor}
With the notations given in Theorem \ref{TMG}, denote $G_1 =\im \hC$ and $G_2=\im \wtd{\hC}.$ Let $p_1: G_1\oplus G_2\to G_1$ be the projection. Then $(G_1\oplus G_2,p_1)$ is Rota-Baxter Lie algebra of weight $-1$, which is Rota-Baxter isomorphic to $(G_+\bowtie G_-,C).$
\end{cor}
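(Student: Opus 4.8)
The plan is to mirror the argument of Corollary \ref{RBIS}, replacing the additive decomposition $C+\wtd{C}=\id$ by the multiplicative factorization supplied by Theorem \ref{TMG}. First I would record that $(G_+\bowtie G_-,\hC)$ is itself a Rota-Baxter group of weight $-1$: by Theorem \ref{TMG} the operator $\hC$ is a group projection on $(G_+,G_-,\rho,\mu)$, so this is immediate from Proposition \ref{GPMG}. The whole corollary then reduces to exhibiting a single Rota-Baxter group isomorphism onto $(G_1\oplus G_2,p_1)$.

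The central object is the map $\pi:G_+\bowtie G_-\to G_1\oplus G_2$ defined by $\pi(g)=(\hC(g),\wtd{\hC}(g))$, the exact group analogue of the $\pi$ used in Corollary \ref{RBIS}. Since $\hC$ and $\wtd{\hC}$ are group homomorphisms (Theorem \ref{TMG}) and $G_1\oplus G_2$ carries the direct product multiplication, $\pi$ is automatically a group homomorphism. For bijectivity I would use the factorization $\hC(g)\cdot_{\bowtie}\wtd{\hC}(g)=g$ from Theorem \ref{TMG}: injectivity follows because $\pi(g)=(e,e)$ forces $g=\hC(g)\cdot_{\bowtie}\wtd{\hC}(g)=e$, and surjectivity follows by sending $(g_1,g_2)\in G_1\times G_2$ to $g_1\cdot_{\bowtie}g_2$ and reading off its image under $\pi$.

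The key computation is the group version of \eqref{CPP}, namely that $\hC\circ\wtd{\hC}$ and $\wtd{\hC}\circ\hC$ are the constant map onto $e$. I would derive it from idempotency together with the factorization identity: for $h\in\im\wtd{\hC}$ one has $\wtd{\hC}(h)=h$, whence $h=\hC(h)\cdot_{\bowtie}\wtd{\hC}(h)=\hC(h)\cdot_{\bowtie}h$ gives $\hC(h)=e$, and symmetrically on $\im\hC$. Granting this, compatibility with the Rota-Baxter operators is a one-line check, $\pi(\hC(g))=(\hC(\hC(g)),\wtd{\hC}(\hC(g)))=(\hC(g),e)=p_1(\pi(g))$, so $\pi\circ\hC=p_1\circ\pi$. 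Hence $p_1=\pi\circ\hC\circ\pi^{-1}$ is conjugate to the Rota-Baxter operator $\hC$ and is therefore a Rota-Baxter operator of weight $-1$ on $G_1\oplus G_2$, and $\pi$ is a Rota-Baxter group isomorphism from $(G_+\bowtie G_-,\hC)$ onto $(G_1\oplus G_2,p_1)$.

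I do not expect a genuine obstacle here, since Theorem \ref{TMG} already carries the real weight: the two operators are group projections, their images commute, and every element factors through them. The only point requiring care is the identification of $G_1\oplus G_2$ with the internal direct product $G_1\cdot G_2$ inside $G_+\bowtie G_-$. This rests on the commutativity of $\im\hC$ and $\im\wtd{\hC}$ established in the proof of Theorem \ref{TMG}, which guarantees that the multiplication map $G_1\times G_2\to G_+\bowtie G_-$ is a homomorphism and that $\pi$ genuinely lands in the direct product rather than in a nontrivial bicrossed product; once this is in place the remaining verifications are the routine bookkeeping described above.
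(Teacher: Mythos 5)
Your proposal is correct and follows essentially the same route as the paper, whose proof of this corollary simply says it is similar to that of Corollary \ref{RBIS}: you transpose exactly that argument, using the same map $\pi(g)=(\hC(g),\wtd{\hC}(g))$ and the same compatibility check $\pi\circ\hC=p_1\circ\pi$. Your derivation of the group analogue of \eqref{CPP} from idempotency together with the factorization $\hC(g)\cdot_{\bowtie}\wtd{\hC}(g)=g$, and your use of bijectivity of $\pi$ in place of the linear-algebra verification, are just the natural multiplicative replacements the paper's ``similar proof'' implicitly requires.
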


\begin{proof}
The proof is similar to the proof of Corollary \ref{RBIS}.
\end{proof}

Let $(G,\hB)$ and $(G',\hB')$ be Rota-Baxter groups of weight $-1.$ Recall that a Rota-Baxter group homomorphism is a map $f:(G,\hB)\to (G',\hB')$ such that 
$f$ is a group homomorphism from $G$ to $G'$ and $f\circ \hB=\hB' \circ f .$

By the definition of $\hC$, it is not hard to check that 
$$ \im \hC=\Big\{(\hB(a^{-1})^{-1},\wtd{\hB}(a))|a\in G\Big\}.$$ 

Here, we give the group version of Corollary \ref{BBIS}.
\begin{cor}
With the notations given in Theorem \ref{TMG}, let $G_1=\im \hC.$ Define $\hB_1:G_1\to G_1$ by 
$$\hB_1\left((\hB(a^{-1})^{-1},\wtd{\hB}(a))\right)=\left(\hB(\wtd{\hB}(a)^{-1}),\wtd{\hB}(\wtd{\hB}(a))  \right), \ \forall a\in G$$
and $\wtd{\hB_1}:G_1\to G_1$ by 
$$ \wtd{\hB_1}\left((\hB(a^{-1})^{-1},\wtd{\hB}(a))\right)=\left(\hB(\hB(a^{-1})),\wtd{\hB}(\hB(a^{-1})^{-1}) \right),\ \forall a\in G.$$
Then:
\begin{itemize}
\item[(a)]$(G_1,\hB_1)$ is a Rota-Baxter group of weight $-1$ that is Rota-Baxter isomorphic to $(G,\wtd{\hB});$
\item[(b)]
$(G_1,\wtd{\hB_1})$ is a Rota-Baxter group of weight $-1$ that is Rota-Baxter isomorphic to $(G,\hB).$
\end{itemize}
\end{cor}
\begin{proof}

First, it is not hard to see that $\hB_1$ is well defined.
Then it follows from Proposition \ref{G-1} and Lemma \ref{SLS} that $$\hB_1\left((\hB(a^{-1})^{-1},\wtd{\hB}(a))\right)=\hC\left( (e,b)\right)$$ and
$$\wtd{\hB_1}\left((\hB(a^{-1})^{-1},\wtd{\hB}(a))\right)=(a,b)\hC((e,b)^{-1})$$ for any $a\in G.$
It follows from Proposition \ref{GS} that $\hB_1$ and $\wtd{\hB_1}$ are Rota-Baxter operators of weight $-1$ on $G$.
Define $\pi:G\to G_2$ by
$$\pi(a)=(\hB(a^{-1})^{-1},\wtd{\hB}(a)).$$
For any $a\in G,$ if $\pi(a)=(e,e),$ it follows that $\hB(a^{-1})^{-1}=\wtd{\hB}(a)=e.$ By \eqref{HBWB}, we have $$a=a\wtd{\hB}(a)^{-1}\wtd{\hB}(a)=\hB(a^{-1})\wtd{\hB}(a)=e.$$
This proves $\pi$ is injective. It follows from the definition of $\pi$ that $\pi$ is surjective. Then we show that $\pi$ is a group homomorphism. By Lemma \ref{SLS}, we have
\begin{equation*}
\begin{aligned}
\pi(a)\pi(b)=&\left(\hB(a^{-1})^{-1},\wtd{\hB}(a)\right)\left(\hB(b^{-1})^{-1},\wtd{\hB}(b)\right)\\
=&\left(\hB(b^{-1}a^{-1})^{-1}, \wtd{\hB}(ab)\right)=\pi(a,b)
\end{aligned}
\end{equation*}
for any $a,b\in G$.
Then we show that $\pi$ is compatible with $\wtd{\hB}$ and $\hB_1$. For any $a\in G$, we have
$$\pi(\wtd{\hB}(a))= \left(\hB(\wtd{\hB}(a)^{-1})^{-1},\wtd{\hB}(\wtd{\hB}(a))\right)=\hB_1((\hB(a^{-1})^{-1},\wtd{\hB}(a)))=\hB(\pi(a)).$$
This means that $\pi$ is a Rota-Baxter isomorphism from $(G,\wtd{\hB})$ to $(G_1,\hB_1).$
It is similar to show (b).

\end{proof}

Let $(G,\hB)$ be a Rota-Baxter group of weight $-1.$ By \cite{LG}, we know that $H_+$ and $H_-$ are both ideals of $G.$
Denote $H_+H_-$ the subgroup of $G_{\hB}$ generated by $H_+$ and $H_-,$ that is,
$$H_+H_-=\Big\{ a_1\cdot_{\hB} b_1 \cdot_{\hB} a_2\cdot_{\hB} b_2\cdot_{\hB} a_n \cdot_{\hB}b_n|a_i\in H_+,\ b_i \in H_-  \Big\}.$$ As $H_+$ and $H_-$ are both ideals of $G_{\hB}$ and $G_{\wtd{B}}$, hence $H_+H_-$ is also an ideal of $G_{\hB}$ and an ideal of $G_{\wtd{\hB}}.$

The next lemma will be used in the proof of Proposition \ref{OL}.

\begin{lem}\label{DRB}
Let $(G,\hB)$ be a Rota-Baxter group of weight $-1.$ Then $\hB$ is a Rota-Baxter operator of weight $-1$ on $G_{\hB}.$ 
\end{lem}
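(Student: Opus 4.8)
The plan is to reproduce, at the group level, the strategy behind Lemma \ref{B1}. The decisive observation is that the Rota-Baxter identity \eqref{RBLS}, once rewritten through the definition $a\cdot_{\hB}b=\Ad_{\hB(a)}(b)\,a$ of the descendent product, says exactly
$$\hB(a)\hB(b)=\hB(a\cdot_{\hB}b),\qquad \forall a,b\in G,$$
so that $\hB$ transports the multiplication $\cdot_{\hB}$ of $G_{\hB}$ to the multiplication of $G$; in other words $\hB$ is a group homomorphism from $G_{\hB}$ to $G$. This single fact, together with the definition of $\cdot_{\hB}$, will drive the entire argument, and no further appeal to \eqref{RBLS} will be needed afterward.

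First I would record two immediate consequences of this intertwining. Since $\hB$ is a homomorphism out of $G_{\hB}$ and $\hB(e)=e$ by Lemma \ref{ID}, it preserves $\cdot_{\hB}$-inverses, i.e. $\hB\big(a^{-1_{\hB}}\big)=\hB(a)^{-1}$, where $a^{-1_{\hB}}$ denotes the inverse of $a$ in $G_{\hB}$; and applying $\hB$ to any $\cdot_{\hB}$-product of several factors distributes it as an ordinary product of the $\hB$-images in $G$.

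Next I would write out explicitly what it means for $\hB$ to be a Rota-Baxter operator of weight $-1$ on $G_{\hB}$: by \eqref{RBLS} applied inside the group $(G_{\hB},\cdot_{\hB})$, I must verify
$$\hB(a)\cdot_{\hB}\hB(b)=\hB\Big(\big(\hB(a)\cdot_{\hB}b\cdot_{\hB}\hB(a)^{-1_{\hB}}\big)\cdot_{\hB}a\Big),\qquad \forall a,b\in G,$$
where the conjugation on the right is formed with respect to $\cdot_{\hB}$. I would then evaluate the right-hand side by pushing $\hB$ through the $\cdot_{\hB}$-product using the homomorphism property and inverse preservation for the middle factor, collapsing it in $G$ to $\hB(\hB(a))\,\hB(b)\,\hB(\hB(a))^{-1}\,\hB(a)=\Ad_{\hB(\hB(a))}(\hB(b))\,\hB(a)$. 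Finally I would compute the left-hand side straight from the definition of $\cdot_{\hB}$ (with $u=\hB(a)$, $v=\hB(b)$), obtaining $\hB(a)\cdot_{\hB}\hB(b)=\Ad_{\hB(\hB(a))}(\hB(b))\,\hB(a)$, and observe that the two sides coincide.

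I expect the only real obstacle to be bookkeeping: keeping straight, at each stage, which multiplication, which adjoint action, and which inverse is in force, since the conjugation defining the Rota-Baxter identity on $G_{\hB}$ must be carefully distinguished from conjugation in $G$. Conceptually the proof is short—once the homomorphism property and inverse preservation are in hand, both sides reduce to the same element $\Ad_{\hB(\hB(a))}(\hB(b))\,\hB(a)$, and the statement follows.
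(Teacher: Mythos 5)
Your proof is correct and takes essentially the same approach as the paper's: both verify \eqref{RBLS} inside $G_{\hB}$ by exploiting that \eqref{RBLS} says precisely that $\hB\colon G_{\hB}\to G$ is a group homomorphism, so that both sides collapse to $\Ad_{\hB(\hB(a))}(\hB(b))\,\hB(a)$. Your explicit bookkeeping of the $\cdot_{\hB}$-inverse versus the $G$-inverse (via $\hB(a^{-1_{\hB}})=\hB(a)^{-1}$) merely makes precise what the paper's one-line computation leaves implicit.
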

\begin{proof}
For any $a,b\in G,$ by \eqref{RBLS}, we have 
\begin{equation*}
\hB(a)\cdot_{\hB} \hB(b)=(\Ad_{\hB\circ \hB(a)}(\hB(b)))\hB(a)=\hB\left( \hB(a)\cdot_{\hB} b \cdot_{\hB}\hB(a)^{-1} \cdot_{\hB} a    \right).
\end{equation*}
This shows that $\hB$ is indeed a Rota-Baxter operator on $G_{\hB}.$
\end{proof}
The Rota-Baxter group of weight $-1$ $(G_{\hB},\hB)$ given in the above lemma is called the descendent Rota-Baxter group of $(G,\hB).$

Then we show that there is a Rota-Baxter group structure on $G_{\hB}/(H_+ H_-).$

\begin{pro}\label{OL}
Let $(G,\hB)$ be a Rota-Baxter group of weight $-1.$ Define the operator $\ol{\hB}:G_{\hB}/(H_+ H_-)\to G_{\hB}/(H_+ H_-)$ by
$$\ol{\hB}(\ol{a})=\ol{\wtd{\hB}(a)}, \ \forall a\in G$$and the operator 
$\wtd{\ol{\hB}}:G_{\wtd{\hB}}/(H_+ H_-)\to G_{\wtd{\hB}}/(H_+ H_-)$ by
$$\wtd{\ol{\hB}}(\ol{a})=\ol{\wtd{\hB}(a)}, \ \forall a\in G.$$
Then 
$(G_{\hB}/(H_+ H_-), \ol{\hB})$ and $(G_{\wtd{\hB}}/(H_+ H_-),\wtd{\ol{B}})$ are Rota-Baxter groups of weight $-1.$
\end{pro}
\begin{proof}
It follows directly from the definition that $\ol{\hB}$ is well defined. Then by Lemma \ref{DRB}, we know $(G_{\hB}/(H_+ H_-), \ol{\hB})$ is a Rota-Baxter group of weight $-1.$
\end{proof}

Finally, in the next theorem, we prove 
that there is a Rota-Baxter structure on $\im \wtd{C},$ and it is Rota-Baxter isomorphic to $(G_{\hB}/(H_+ H_-), \ol{\hB})$. Where $\wtd{C}$ is given in Theorem \ref{TMG}.
\begin{thm}\label{FTH}
Let $(G,\hB)$ be a Rota-Baxter group of weight $-1$ and $(G_+,G_-,\rho,\mu)$ be the matched pair of groups on $(G,\hB).$ Let $\wtd{\hC}$ be the operator defined in Theorem \ref{TMG}, $G_2=\im \wtd{\hC}$ and $\ol{\hB}:G_{\hB}/(H_+ H_-)\to G_{\hB}/(H_+ H_-)$ be the operator defined in Proposition \ref{OL}. Define the operator $\hB_2: G_2\to G_2$ by 
$$\hB_2((a,b))=\wtd{\hC}((e,b)), \ \forall (a,b)\in G_2,$$ and the operator $\wtd{\hB_2}:G_2\to G_2$ by
$$\wtd{\hB_2}((a,b))=\wtd{\hC}((a,e)), \ \forall (a,b)\in G_2.$$
Then the following holds:
\begin{itemize}
\item[(a)]$(G_2,\hB_2)$ is a Rota-Baxter group of weight $-1$ which is Rota-Baxter isomorphic to $\left(G_{\hB}/(H_+ H_-), \ol{\hB}\right);$ 
\item[(b)]
$(G_2,\wtd{\hB_2})$ is a Rota-Baxter group of weight $-1$ which is Rota-Baxter isomorphic to $\left(G_{\wtd{\hB}}/(H_+ H_-), \wtd{\ol{\hB}}\right).$
\end{itemize}
\end{thm}

\begin{proof}
By Proposition \ref{GS}, we know that $(G_2,\hB_2)$ is a Rota-Baxter group of weight $-1.$ Now define the operator $\pi:(G_{\hB}/(H_+ H_-), \ol{\hB})\to (G_2, \hB_2)$ by 
$$ \pi(\ol{a})=\left(\hB(\wtd{\hB}(a^{-1})  )^{-1},\wtd{\hB}(\hB(a)^{-1})\right),\ \forall a\in G.$$
It follows directly from the definition that $\pi$ is well defined. Then we prove that $\pi$ is a group homomorphism. For any $a,b\in G,$ we have 
$$    
\begin{aligned}
\pi(\ol{a})\pi(\ol{b})&=\left(\hB(\wtd{\hB}(a^{-1})  )^{-1},\wtd{\hB}(\hB(a)^{-1})\right) \left(\hB(\wtd{\hB}(b^{-1})  )^{-1},\wtd{\hB}(\hB(b)^{-1})\right)\\
&=\left((\hB(\Ad_{\wtd{\hB}(\hB(b)^{-1})^{-1}}  \wtd{\hB}(a^{-1})  ))^{-1}\hB(\wtd{\hB}(b^{-1})  )^{-1}, \wtd{\hB}(\hB(a)^{-1})\wtd{\hB}(\Ad_{\hB(\wtd{\hB}(a^{-1})  )^{-1}} \hB(b)^{-1})  \right)\\
&=\left((\hB(\wtd{\hB}(a^{-1})\wtd{\hB}(b^{-1}) ))^{-1}, \wtd{\hB}(\hB(b)^{-1}\hB(a)^{-1})\right)(\textit{by Lemma \ref{IS} and $\eqref{RBLS}$})\\
&=\left(\hB(\wtd{\hB}((a\cdot_{\hB} b)^{-1}))^{-1},\wtd{\hB}(\hB(a\cdot_{\hB} b)^{-1})\right)(\textit{by Lemma \ref{DEC} and \eqref{RBLS}})\\
&=\pi(\ol{a\cdot_{\hB} b}).
\end{aligned}
$$
This implies $\pi$ is a Lie algebra homomorphism. Then we prove that $\pi$ is injective. For any $a\in G,$ if $\pi(\ol{a})=(e,e),$ then we have $\hB(\wtd{\hB}(a^{-1}))^{-1}=e.$ It follows that $\wtd{\hB}(a^{-1})\in H_-.$ Similarly, as $\wtd{\hB}(\hB(a)^{-1})=e,$ we have $\hB(a)\in H_+.$ It follows that $a^{-1}=\wtd{\hB}(a)\hB(a^{-1})^{-1}\in H_+ H_-.$ This means $a\in H_+ H_-.$ Then from the definition of $\pi$, it follows that $\pi$ is surjective. Finally, we show that $\pi$ is compatible with the Rota-Baxter operators. For any $a\in G,$ we have 
$$\pi(\ol{\hB(a)})=\left(\hB(\wtd{\hB}(\hB(a)^{-1})  )^{-1},\wtd{\hB}(\hB(\hB(a))^{-1})\right)=\wtd{C}\left(e,\wtd{\hB}(\hB(a)^{-1})\right)=\hB_2(\pi(\ol{a})).$$ Therefore, $\pi$ is a Rota-Baxter group isomorphism. This proves (a), and it it similar to prove (b).
\end{proof}

It follows directly from the above theorem to get the following corollary.
\begin{cor}
With the notations given in Theorem \ref{FTH}, define $\pi_1:(G_{\hB}, \hB)\to (G_2,\hB_2)$ and $\pi_2:(G_{\wtd{\hB}},\wtd{\hB})\to (G_2,\wtd{\hB_2})$ by
\begin{equation*}
\begin{aligned}
\pi_1(a)&=\left(\hB(\wtd{\hB}(a^{-1})  )^{-1},\wtd{\hB}(\hB(a)^{-1})\right),\\
\pi_2(a)&=\left(\hB(\wtd{\hB}(a^{-1})  )^{-1},\wtd{\hB}(\hB(a)^{-1})\right),
\end{aligned}
\end{equation*}
for any $a\in  G.$ Then $\pi_1$ and $\pi_2$ are surjective Rota-Baxter group homomorphisms.
\end{cor}

\vspace{2mm}
\noindent
{\bf Acknowledgements. } This research is supported by Scientific Research Foundation for High-level Talents of Anhui University of Science and Technology (2024yjrc49). We thank Yufei Qin for his helpful suggestions.

\smallskip

\vspace{2mm}
\noindent
{\bf Declaration of interests. } The authors have no conflicts of interest to disclose.

\smallskip

\vspace{2mm}
\noindent
{\bf Data availability. } Data sharing not applicable to this article as no datasets were generated or analysed in this work.

\vspace{-.2cm}


\end{document}